\let\c@table\c@figure 
\let\ftype@table\ftype@figure 
\numberwithin{equation}{section}
\newtheorem{theorem}[equation]{Theorem}
\newtheorem{lemma}[equation]{Lemma}
\newtheorem{proposition}[equation]{Proposition}
\newtheorem{corollary}[equation]{Corollary}
\newtheorem*{theorem*}{Theorem}
\theoremstyle{definition}
\newtheorem{definition}[equation]{Definition}
\newtheorem{remark}[equation]{Remark}
\newtheorem*{question*}{Question}
\theoremstyle{remark}
\newtheorem{example}[equation]{Example}
\DeclareMathOperator{\im}{Im}
\DeclareMathOperator{\Aut}{Aut}
\DeclareMathOperator{\Hom}{Hom}
\DeclareMathOperator{\Sym}{Sym}
\DeclareMathOperator{\rad}{rad}
\DeclareMathOperator{\Soc}{Soc}
\DeclareMathOperator{\Gr}{Gr}
\DeclareMathOperator{\GL}{GL}
\DeclareMathOperator{\gr}{gr}
\DeclareMathOperator{\Spec}{Spec}
\DeclareMathOperator{\Ann}{Ann}
\DeclareMathOperator{\Hilb}{Hilb}
\DeclareMathOperator{\id}{id}
\DeclareMathOperator{\pr}{pr}
\DeclareMathOperator{\inn}{in}
\newcommand{\OO}{\mathcal{O}}%
\newcommand{\PP}{\mathbb{P}}
\newcommand{\KK}{\mathbb{k}}
\newcommand{\inj}{\hookrightarrow}
\renewcommand{\AA}{\mathbb{A}}
\newcommand{\CC}{\mathbb{C}}
\newcommand{\BBname}{Bia{\l{}}ynicki-Birula}
\newcommand{\Gmult}{\mathbb{G}_{\mathrm{m}}}%
\newcommand{\HilbFunc}[2]{\Hilb_{#1}(\AA^{#2},0)}
\newcommand{\HilbFuncTheta}[2]{\widetilde{\Hilb}_{#1}(\AA^{#2},0)}
\newcommand{\HilbFuncGor}[2]{\Hilb_{#1}^{\mathrm{Gor}}(\AA^{#2},0)}
\newcommand{\HilbFuncGr}[2]{\Hilb^{ \Gmult}_{#1}(\AA^{#2},0)}
\newcommand{\HilbSmk}{\Hilb^{\mathrm{sm}}_{k}}
\newcommand{\EE}{\mathcal{E}}%
\newcommand{\into}{\hookrightarrow}
\newcommand{\onto}{\twoheadrightarrow}
\newcommand{\spann}[1]{\left\langle #1 \right\rangle}
\newcommand{\goodquotient}{\mathbin{
  \mathchoice{\left/\mkern-8mu\right/}
    {/\mkern-7mu/}
    {/\mkern-7mu/}
    {/\mkern-7mu/}}}
\newcommand{\mm}{\mathfrak{m}}%
\begin{document}

\title{On construction of $k$-regular maps to Grassmannians via algebras of socle dimension two}
\author{Joachim Jelisiejew\thanks{Faculty of Mathematics, Informatics
and Mechanics, 
Stefana Banacha 2, 02-097 Warszawa, Poland. Supported by Polish
NCN grant 2020/39/D/ST1/00132. A revision of this work has partially supported
by the Thematic Research Programme "Tensors: geometry, complexity and quantum entanglement", University of Warsaw, Excellence Initiative – Research University and the Simons Foundation Award No. 663281 granted to the Institute of Mathematics of the Polish Academy of Sciences for the years 2021-2023.},
Hanieh Keneshlou\thanks{Department of Mathematics and Statistics, Universitätsstraße 10, 78464 Konstanz, Germany.}}

\maketitle

\begin{abstract}
    A continuous map $\mathbb{C}^n\to \Gr(\tau, N)$ is $k$-regular if the
    $\tau$-dimensional subspaces corresponding to images of any $k$ distinct points
    span a $\tau k$-dimensional space. For $\tau = 1$ this essentially
    recovers the classical notion of a $k$-regular map $\mathbb{C}^n\to
    \mathbb{C}^N$. We provide new examples of $k$-regular
    maps, both in the classical setting $\tau = 1$ and for $\tau\geq 2$, where
    these are the first examples known.\\
    Our methods come from algebraic geometry, following and
    generalizing~\cite{bjjm}. The key and highly nontrivial part of the
    argument is proving that certain loci of the Hilbert scheme of points have
    expected dimension. As an important side result, we prove
    the irreducibility of the punctual Hilbert scheme of $k$ points on a
    threefold, for $k\leq 11$.
\end{abstract}

\tableofcontents


\section{Introduction}

    A continuous map $f\colon \mathbb{C}^n\to \mathbb{C}^N$ is (linearly)
    \emph{$k$-regular} if
    the images $f(x_1)$, \ldots, $f(x_k)$ are linearly independent for any
    pairwise distinct points $x_1, \ldots ,x_k$. The problem of the existence of
    such maps for given $k$, $n$, $N$ is researched classically in topology
    and is also important in applications, for example to approximation theory,
    see~\cite{bjjm}.
    This problem decomposes into two distinctly differently flavored halves:
    providing \emph{lower} and \emph{upper} bounds for the minimal $N = N(n,
    k)$ for which a $k$-regular map exists.
    The lower bounds are provided using algebro-topological
    methods~\cite{Ziegler, Ziegler2}, while the upper bounds follow from
    constructions in~\cite{bjjm}.

    A continuous map $f\colon \mathbb{C}^n\to \mathbb{P}(\CC^N)$ is
    \emph{projectively $k$-regular} if the images $f(x_1)$, \ldots, $f(x_k)$
    span a $(k-1)$-dimensional projective subspace for any
    pairwise distinct points $x_1, \ldots ,x_k$. This notion is closely related
    to linear $k$-regularity, see~\cite[Lemma~2.3]{bjjm}.

    A natural generalization of projectively $k$-regular maps comes from
    replacing $\mathbb{P}(\mathbb{C}^{N})$ by a Grassmannian $\Gr(\tau, N)$
    of $\tau$-dimensional subspaces of $\mathbb{C}^N$. A continuous map
    $f\colon
    \mathbb{C}^n\to \Gr(\tau, N)$ is \emph{$k$-regular} if for every pairwise distinct
    points $x_1, \ldots ,x_k\in \CC^n$ the subspaces $f(x_1), \ldots ,f(x_k)$
    jointly span a $(k\cdot \tau)$-dimensional subspace of $\mathbb{C}^N$,
    i.e., their basis vectors are jointly linearly independent. For $\tau = 1$
    this recovers the notion of a projectively $k$-regular map. Also in this
    context one would like to bound the minimal $N=N(\tau, n, k)$ for which a
    $k$-regular map exists.\\

    In this article, we prove the following result.
    \newcommand{\boundingFunction}[3]{\widetilde{N}(#1, #2, #3)}%
     \begin{theorem}\label{ref:mainthm}
        Consider the function $\boundingFunction{\tau}{k}{n}$ defined by
        \begin{enumerate}
            \item\label{it:mainthm:one} If $k\leq 8$, or if $\tau\leq2$ and $k\leq 11$, then
                $\boundingFunction{\tau}{k}{n} := (n-1)(k-1) + \tau k$.
            \item Otherwise $\boundingFunction{\tau}{k}{n} = n(k-1)-1 + \tau k$.
        \end{enumerate}
        Then there exists a $k$-regular map $\mathbb{C}^n\to \Gr(\tau,
        \boundingFunction{\tau}{k}{n})$.
    \end{theorem}
    In the classical case $\tau = 1$ this extends the bounds from~\cite{bjjm}
    for $k = 10,11$.
    The method of construction, which follows the ideas of~\cite{bjjm}, is the following.
    \begin{enumerate}
        \item By a direct construction, a $k$-regular algebraic map $f_0\colon \mathbb{C}^n\to
            \Gr(\tau, N_0)$ exists for $N_0\gg 0$.
        \item We seek a $W\subset \mathbb{C}^{N_0}$ such that the composed map
            $f_W\colon \mathbb{C}^n\to \Gr(\tau, N_0)\dashrightarrow \Gr(\tau, N_0 - \dim
            W)$ induced by $\mathbb{C}^{N_0}\onto \mathbb{C}^{N_0}/W$ is still
            everywhere defined and $k$-regular. In fact, since
            $\mathbb{C}^n$ is homeomorphic to any open ball around the origin,
            it is enough for $f_W$ to be defined and $k$-regular around the
            origin.
        \item A subspace $W$ satisfies the requirements above if it does not
            intersect a certain \emph{bad locus}, given by the spans of all
            $f_0(Z)$ for $Z\subset \mathbb{C}^n$ a zero-dimensional degree
            $k$ subscheme supported at the
            origin. The dimension of this bad locus is bounded from above by
            the dimension of the family of possible $Z$, which is the punctual Hilbert scheme
            $\Hilb_k(\mathbb{A}^n, 0)$, see below.
        \item The bad locus above is covered by spans of $Z$ which have socle
            dimension $\leq \tau$, so we may restrict to these. This is
            perhaps geometrically obscure, but
            allows reducing to
            bounding the dimension of the locus $\Hilb_k^{\tau}(\mathbb{A}^n,
            0) \subset \Hilb_k(\mathbb{A}^n, 0)$ parameterizing them.
    \end{enumerate}
    The final remaining task, to bound $\dim \Hilb_k^{\tau}(\mathbb{A}^n, 0)$,
    is completely disconnected from topology, but very far from trivial,
    especially to get the better estimate in Point~\ref{it:mainthm:one}.
    It occupies the major part of the current paper and we discuss it at length below.
    Let us only remark that the above Theorem~\ref{ref:mainthm} is close to
    optimal when employing the methods of~\cite{bjjm}, as follows from
    Theorem~\ref{ref:dimension:maintheorem} below. Hence radically new
    approaches are necessary to improve the upper bounds on $N(\tau, n,
    k)$ further.
    We also remark that the case $\tau = 1$ is qualitatively easier than the
    other ones, thanks to the fact that in this case one deals exclusively
    with Gorenstein algebras, see~\S\ref{sec:regularMaps} for details,
    and a mature theory of classification of those algebras exists.

    We work over an algebraically closed field
    $\KK$ of characteristic zero.
    The dimension of the bad locus above is governed by
    a sublocus of the Hilbert scheme of points
    $\Hilb_k(\mathbb{A}^n)$.
    The Hilbert scheme is a quasi-projective scheme and its closed points
    are in bijection with closed subschemes $Z \subset
    \mathbb{A}^n$ that are zero-dimensional and of degree $k$; the simplest
    example of $Z$ is
    a tuple of $k$ closed points of $\mathbb{A}^n$. The topology of the
    Hilbert scheme is
    defined functorially, see for example \cite[\S4]{bjjm} for details. We
    denote by $[Z]\in \Hilb_k(\mathbb{A}^n)$ the point corresponding to
    $Z\subseteq \mathbb{A}^n$.

    Let $\Hilb_k(\mathbb{A}^n, 0)\inj \Hilb_k(\mathbb{A}^n)$ denote the
    closed locus consisting of $Z \subset \mathbb{A}^n$ such that $Z$ is
    supported only at the origin. This is the \emph{punctual} Hilbert scheme,
    see~\S\ref{sec:irreducibilitypreliminaries} for details. It is a
    projective, in particular proper, scheme. (A warning: some authors use
    the name ``punctual'' for the whole $\Hilb_k(\mathbb{A}^n)$.)

    Compared with $\Hilb_k(\mathbb{A}^n)$, much less is known
    about the punctual Hilbert scheme $\Hilb_k(\mathbb{A}^n, 0)$.
    This scheme has a distinguished irreducible component, the
    \emph{curvilinear locus}
    which is the locus of \emph{curvilinear schemes}: schemes
    isomorphic to $\Spec(\KK[\varepsilon]/\varepsilon^k)$,
    see~\cite{ia_deformations_of_CI}. This locus has
    dimension $(n-1)(k-1)$, thus $\dim \Hilb_k(\mathbb{A}^n, 0) \geq
    (n-1)(k-1)$. We say that $(n-1)(k-1)$ is the \emph{expected dimension} of
    $\Hilb_k(\mathbb{A}^n,0)$ and  that a locus of $\Hilb_k(\mathbb{A}^n,0)$ is \textit{negligible} if
    its dimension is at most $(n-1)(k-1)$.
    Let $\Hilb_k^{\tau}(\mathbb{A}^n,
    0)\subset \Hilb_k(\mathbb{A}^n,0)$ denote the open locus that consists of
    $[Z = \Spec(A)]$ where the socle of $A$ is at most $\tau$-dimensional;
    for example $\tau=1$ corresponds to a Gorenstein $Z$.
    Our first main result concerns $(n,k,\tau)$ for which the dimension of
    $\Hilb_k^{\tau}(\mathbb{A}^n, 0)$ is
    expected.

    \begin{theorem}\label{ref:dimension:maintheorem}
        \def\checkmark{\tikz\fill[scale=0.4](0,.35) -- (.25,0) -- (1,.7) --
        (.25,.15) -- cycle;}
        For every $k\leq 8$ and $n$ the scheme $\Hilb_k(\mathbb{A}^n,0)$ has expected dimension
        while for every $k\geq 9$ the dimension of $\Hilb_k^3(\mathbb{A}^{k-4}, 0)$ is higher than
        expected.
        For every $k\leq 11$ and $n$ the scheme $\Hilb_k^2(\mathbb{A}^n,0)$ has
        expected dimension, while for $k\geq 13$ the dimension of
        $\Hilb_{k}^2(\mathbb{A}^{\lfloor k/2\rfloor-1},0)$ is higher than
        expected and
        for $k\geq 14$ the dimension of
        $\Hilb_{k}^1(\mathbb{A}^{\lfloor k/2\rfloor-1},0)$ is higher than
        expected. This information can be summarized as follows, where
        \emph{no} means \emph{no if $n$ is big enough}.

        \begin{center}
            \begin{tabular}[<+position+>]{l c c c c c}
                & $k\leq 8$ & $9\leq k\leq 11$ & $k=12$ & $k=13$ & $k\geq 14$\\
                \toprule
                $\Hilb_k(\mathbb{A}^n, 0)$ & \checkmark & no & no & no & no \\
                $\Hilb^{\tau}_k(\mathbb{A}^n, 0)$, $\tau\geq 3$ & \checkmark & no & no & no & no \\
                $\Hilb^2_k(\mathbb{A}^n, 0)$ & \checkmark & \checkmark & ? & no & no \\
                $\Hilb^1_k(\mathbb{A}^n, 0)$ & \checkmark & \checkmark & ? & ? & no
            \end{tabular}\\\vspace{1mm}
            {Table. \small Does $\Hilb^{\tau}_k(\mathbb{A}^n, 0)$ have expected
        dimension for every $n$?}
    \end{center}
    \end{theorem}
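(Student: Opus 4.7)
The plan is to separately handle the positive cases (expected dimension) and the negative cases (strictly larger dimension), both by organizing the analysis according to the Hilbert function of the associated graded algebra.

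For the positive part, I would stratify $\Hilb_k^{\tau}(\AA^n, 0)$ by Hilbert function: for each admissible $\Hh = (1, h_1, h_2, \ldots , h_d)$ with $\sum_i h_i = k$, let $\Hilb_{\Hh}^{\tau}(\AA^n, 0)$ denote the locally closed stratum consisting of ideals $I$ such that $\gr_{\mm}(R/I)$ has Hilbert function $\Hh$. A $\Gmult$-action on $\AA^n$ identifies each stratum, up to a controlled negative-gradient correction, with the analogous stratum on the \emph{graded} Hilbert scheme; this reduces the problem to bounding the dimension of the graded strata plus a standard tangent correction. It then suffices to show, for each Hilbert function $\Hh$ and each $(k, \tau, n)$ in the positive ranges of the table, that the resulting dimension does not exceed $(n-1)(k-1)$.

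The main tool for bounding graded strata is Macaulay's apolarity, which identifies local Artinian algebras $A = R/I$ of socle dimension $\leq \tau$ with quotients $R/\Ann(V)$, where $V \subseteq \KK[y_1, \ldots , y_n]$ is a subspace closed under contraction and generated as an $R$-module by at most $\tau$ elements. This yields an explicit parametrization of the graded stratum by a locally closed subset of a product of Grassmannians, modulo the action of $\GL_n \times (\KK^*)^{\tau}$ which changes coordinates and permutes socle generators. For each $\Hh$ one compares the dimension of the parametrizing variety against the dimension of the symmetry group and the target $(n-1)(k-1)$. For the small values $k \leq 8$ (any $\tau$) and $k \leq 11$ (for $\tau \leq 2$) the list of possible Hilbert functions is finite, and each can be treated by an apolarity dimension count, occasionally supplemented by ad hoc arguments in the critical compressed cases.

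For the negative direction, I would construct explicit Iarrobino-style families. For $\tau = 3$, $k \geq 9$, $n = k-4$, consider graded algebras with Hilbert function $(1, n, 3)$: parametrizing the codimension-$3$ subspace of $R_2$ cut out by $I_2$ gives a family whose dimension, after subtracting $\GL_n$-orbit corrections, already exceeds $(n-1)(k-1)$. For $\tau = 2$, $k \geq 13$, $n = \lfloor k/2 \rfloor - 1$, use compressed algebras of socle dimension $2$ with a carefully chosen shape; and for $\tau = 1$, $k \geq 14$, use compressed Gorenstein algebras with symmetric Hilbert function concentrated in two interior degrees. In each case a direct parameter count on the apolar Macaulay system, compared with $(n-1)(k-1)$, yields the claimed strict inequality.

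The main obstacle is the positive part for $\tau \leq 2$ up to $k = 11$: the number of Hilbert functions to check grows quickly, and several ``intermediate'' ones admit non-obvious unobstructed deformations that could inflate the stratum dimension. The critical strata are those of low embedding codimension or with plateaus in $\Hh$, where the socle-dimension restriction $\tau \leq 2$ must be used in a sharp, sometimes non-generic, way to force the bound $(n-1)(k-1)$. Controlling exactly these exceptional strata is where the difficulty concentrates, and is also where the estimates break down beyond the thresholds displayed in the table.
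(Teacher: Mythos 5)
Your overall strategy---stratify by the Hilbert function of the associated graded algebra, reduce each stratum to its graded counterpart via the $\Gmult$-action, parameterize graded strata by apolarity, and exhibit explicit families for the negative direction---is the same as the paper's, and your counterexample families ($H=(1,n,3)$ for $\tau\geq 3$, compressed-type Gorenstein functions $(1,n,n,1)$, $(1,n,n,1,1)$ for $\tau=1$, and $(1,n,n+1,1)$ for $\tau=2$) match the ones actually used. However, there are two genuine gaps. First, the reduction from $\HilbFunc{H}{n}$ to $\HilbFuncGr{H}{n}$ ``plus a standard tangent correction'' is not standard and is where almost all of the work lies: the fiber of the associated-graded map $\pi$ is only \emph{bounded above} by $\dim\Hom_R(I,R/I)_{>0}$, this must be propagated by semicontinuity to Borel-fixed points to make the computation finite, and in a nontrivial number of cases (e.g.\ the Hilbert functions $(1,3,3,2,1)$, $(1,3,4,2,1)$, $(1,4,3,2,1)$) the tangent-space bound \emph{exceeds} the target $(n-1)(k-1)$ and must be supplemented by further arguments --- the local-constancy trick at would-be smooth points, degenerations inside fibers, and partial classifications of the graded strata (Propositions~\ref{ref:1m321:prop}--\ref{ref:smallExceptionalCells:prop}, \ref{ref:14321final:prop}). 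Your plan as written would stall precisely on these strata. Relatedly, ruling out the strata with $H(1)=3$ is not a routine apolarity count: it amounts to proving $\dim\Hilb_k(\mathbb{A}^3,0)=2(k-1)$ for $k\leq 11$ (Theorem~\ref{ref:irreducibility}), which occupies most of Section~3 and is an essential input to the $k\leq 8$ and $9\leq k\leq 11$ columns of the table.

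Second, your parameterization ``modulo the action of $\GL_n\times(\KK^*)^{\tau}$'' is incorrect for counting dimensions in the Hilbert scheme. Distinct homogeneous inverse systems give distinct ideals, hence distinct points of $\HilbFuncGr{H}{n}$; one must \emph{not} quotient by coordinate changes (that would compute moduli of algebras, not loci in the Hilbert scheme). The only redundancy to remove is in the choice of module generators of the inverse system, and for the nongraded Gorenstein loci the correct fiber correction is the $k$-dimensional unit group of $R/\Ann(f)$ (Proposition~\ref{ref:polynomialsAndGorensteinAlgebras:prop}), not a $\GL_n$-orbit. Carried through literally, subtracting $\GL_n$-orbit dimensions would destroy the negative-direction counts (e.g.\ $3\bigl(\binom{n+1}{2}-3\bigr)$ for $(1,n,3)$ would no longer exceed $(n-1)(n+3)$) and would give invalid upper bounds in the positive direction.
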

    Before the current work, the case $\tau = 1$ was investigated in~\cite[Theorem~A.16]{bjjm}
    where it was proven that $\dim\Hilb^1_k(\mathbb{A}^n, 0) = (k-1)(n-1)$ for $k\leq 9$.
    Apart from this, to the authors' knowledge, no results were known for
    general $n$.
    For $n\leq 2$ the closure of the curvilinear locus is the only component; the punctual
    Hilbert scheme is irreducible by a result of Brian\c{c}on~\cite{briancon},
    see also~\cite{iarrobino_punctual}.
    In~\cite[Corollary~(1.2)]{Ellingsrud_Stromme__On_the_homology} the authors
    reprove Brian\c{c}on's result via a beautiful topological argument, which relies heavily
    on that $\Hilb_k(\mathbb{A}^2)$ is smooth and irreducible.
    In the current article, we extend the irreducibility result to threefolds, for $k\leq 11$.
    \begin{theorem}\label{ref:irreducibility}
        For $k\leq 11$ the scheme $\Hilb_k(\mathbb{A}^3, 0)$ is equal to the
        closure of the curvilinear locus, so
        it is irreducible of dimension $2(k-1)$.
    \end{theorem}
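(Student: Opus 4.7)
The curvilinear locus in $\Hilb_k(\mathbb{A}^3,0)$ is closed, irreducible, of dimension $2(k-1)$, so the statement is equivalent to showing that every $[Z]\in \Hilb_k(\mathbb{A}^3,0)$ lies in that locus. My plan is to stratify the punctual Hilbert scheme by the socle dimension $\tau=\dim_\KK \Soc(\cO_Z)$ and treat the ranges $\tau\leq 2$ and $\tau\geq 3$ separately. For $\tau\leq 2$, Theorem~\ref{ref:dimension:maintheorem} yields $\dim \Hilb^2_k(\mathbb{A}^3,0)\leq 2(k-1)$ for $k\leq 11$; since the curvilinear locus is an irreducible $2(k-1)$-dimensional closed subset of $\Hilb^2_k(\mathbb{A}^3,0)$, it is already a top-dimensional component, and it remains to rule out further components inside $\Hilb^2_k(\mathbb{A}^3,0)$. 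I would do this via the Macaulay inverse system correspondence: algebras of length $k$ and socle dimension at most $\tau$ on $\mathbb{A}^3$ correspond to $\tau$-generated submodules of the divided-power module of $\KK[x_1,x_2,x_3]$, and for $\tau\leq 2$ and the small values of $k$ in question the resulting parameter space is connected (after quotienting by the $\GL_3\times\GL_\tau$-action), which transfers to irreducibility of $\Hilb^2_k(\mathbb{A}^3,0)$.

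The main and hardest task is the stratum $\tau\geq 3$. In embedding dimension at most $3$ and length at most $11$, only a short list of Hilbert functions $(1,h_1,\ldots,h_s)$ with $h_1\leq 3$ and socle at least $3$ can occur --- for example $(1,3,3)$, $(1,3,3,1)$, $(1,3,3,3)$, and $(1,3,3,3,1)$. For each, I would first classify the isomorphism classes of local Artinian $\KK$-algebras realizing it (a finite-type problem for these lengths, overlapping with existing classifications in the literature) and, for each isomorphism class, exhibit an explicit one-parameter flat family over $\mathbb{A}^1$ whose general fibre is a curvilinear scheme $\Spec\KK[\varepsilon]/\varepsilon^k$ embedded in $\mathbb{A}^3$ along a parametric curve and whose special fibre is the given $\Spec A$. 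A natural tool is a weighted degeneration: place a $\Gmult$-weight on the coordinates of $\mathbb{A}^3$ so that the initial ideal of a suitably chosen curvilinear ideal with respect to the weight is precisely the ideal defining $A$.

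Combined with the first step, this shows that every $[Z]\in \Hilb_k(\mathbb{A}^3,0)$ lies in the curvilinear component, yielding irreducibility and dimension $2(k-1)$. The principal obstacle is the explicit construction of smoothing families for every isomorphism class in the $\tau\geq 3$ stratum: the enumeration itself is short, but the individual deformations are delicate, particularly at the top end $k=11$, where the sharpness in Theorem~\ref{ref:dimension:maintheorem} suggests the argument is tight. For the borderline cases such as Hilbert function $(1,3,3,3,1)$ of length $11$, computer-algebra verification (for instance in Macaulay2) appears well suited to confirming smoothability of the exceptional isomorphism classes.
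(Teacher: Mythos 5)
Your proposal diverges fundamentally from the paper's argument and has several genuine gaps. First, a circularity: the bound $\dim\Hilb^2_k(\mathbb{A}^3,0)\leq 2(k-1)$ that you import from Theorem~\ref{ref:dimension:maintheorem} is, in the paper, itself deduced \emph{using} Theorem~\ref{ref:irreducibility} (the proof of Theorem~\ref{ref:dimension:maintheorem} invokes irreducibility of $\Hilb_k(\mathbb{A}^3,0)$ to force $H(1)\geq 4$), so you cannot assume it here. Second, connectivity of the parameter space of two-generated inverse systems does not imply irreducibility of $\Hilb^2_k(\mathbb{A}^3,0)$ --- a connected scheme may have many irreducible components, and ruling out extra components is exactly the content of the theorem; note also that the curvilinear locus is \emph{not} contained in $\Hilb^2_k(\mathbb{A}^3,0)$, since socle dimension jumps up on its boundary (e.g.\ $(x_1,x_2,x_3)^2$ for $k=4$ has socle dimension $3$ yet is a limit of curvilinear schemes). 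Third, and most seriously, the $\tau\geq 3$ stratum is far larger than your list suggests: socle dimension is not determined by the Hilbert function, the relevant Hilbert functions include $(1,3,3,2)$, $(1,3,4,2)$, $(1,3,5,2)$, $(1,3,6,1)$, $(1,3,4,3)$, $(1,3,4,2,1)$ and more, and for $k\geq 7$ the isomorphism classes of such algebras form positive-dimensional moduli, so ``classify the isomorphism classes and smooth each one'' is not a finite procedure. The paper explicitly remarks that the points of $\Hilb_k(\mathbb{A}^3,0)$ are quite impossible to classify for $k>6$.

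The paper's route avoids all of this. It first shows (Proposition~\ref{ref:componentsLowerBound:prop}, a weak purity statement for the branch locus of the universal family, combined with irreducibility of $\Hilb_k(\mathbb{A}^3)$ for $k\leq 11$) that \emph{every} irreducible component of $\Hilb_k(\mathbb{A}^3,0)$ has dimension at least $2(k-1)$. It then uses the \BBname{} decomposition of $\Hilb^+_k(\mathbb{A}^3,0)$ into cells indexed by Hilbert functions and bounds the dimension of each cell by the nonnegatively graded part $\Hom_R(I,R/I)_{\geq 0}$ of the tangent space at $\Gmult$-fixed points; by semicontinuity it suffices to check Borel-fixed (monomial) ideals, of which there are finitely many, and the handful of exceptional ideals are eliminated by hand in Propositions~\ref{ref:tangentSpacesToCells:prop} and~\ref{ref:smallExceptionalCells:prop}. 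Since only the curvilinear cell attains dimension $2(k-1)$, the lower bound forces $\Hilb_k(\mathbb{A}^3,0)$ to consist of a single component. No classification of algebras and no explicit smoothings are needed; if you want to salvage your plan, you would have to replace both the classification step and the connectivity step by dimension estimates of this kind.
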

    The punctual Hilbert scheme does not depend on the
    global geometry~\cite[Proposition~2.2]{fogarty} so we immediately
    obtain the following generalization.
    \begin{theorem}\label{ref:irreducibilityGeneral}
        Let $x\in X$ be a smooth point on a threefold  $X$.
        Then for $k\leq 11$ the scheme $\Hilb_k(X, x)$ is irreducible of dimension $2(k-1)$.
    \end{theorem}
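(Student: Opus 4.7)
The plan is to reduce Theorem~\ref{ref:irreducibilityGeneral} to the affine case proved in Theorem~\ref{ref:irreducibility} by the standard principle that the punctual Hilbert scheme at a point is determined by the completion of the local ring there. A length-$k$ subscheme $Z\subset X$ supported at $x$ is cut out by an ideal $I\subset \OO_{X,x}$ with $\OO_{X,x}/I$ of length $k$. Such a quotient is a finite-dimensional local $\KK$-algebra and is therefore $\mm_x$-adically complete, so the ideal $I$ is recovered from its image in $\widehat{\OO}_{X,x}$. The analogous statement holds in families, which is how one sees that the functor $\Hilb_k(X,x)$ depends only on $\widehat{\OO}_{X,x}$; this is the content of~\cite[Proposition~2.2]{fogarty} that is invoked in the paragraph preceding the statement.

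Since $x$ is a smooth point on a threefold, we have a $\KK$-algebra isomorphism
\[\widehat{\OO}_{X,x} \;\cong\; \KK[[t_1,t_2,t_3]] \;\cong\; \widehat{\OO}_{\mathbb{A}^3,0},\]
and the previous paragraph then yields an isomorphism of schemes
\[\Hilb_k(X,x)\;\cong\;\Hilb_k(\mathbb{A}^3,0).\]
Applying Theorem~\ref{ref:irreducibility} to the right-hand side gives, for every $k\leq 11$, that $\Hilb_k(X,x)$ is irreducible of dimension $2(k-1)$, equal to the closure of its curvilinear locus.

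There is essentially no obstacle in this proof: the entire content sits in Theorem~\ref{ref:irreducibility}, and the passage from $\mathbb{A}^3$ to a general smooth threefold is purely formal. The only routine check is that Fogarty's statement is an isomorphism of schemes rather than of underlying sets, which is immediate from the functorial description of both sides.
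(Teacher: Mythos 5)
Your proof is correct and follows exactly the paper's route: the paper likewise deduces Theorem~\ref{ref:irreducibilityGeneral} from Theorem~\ref{ref:irreducibility} by citing~\cite[Proposition~2.2]{fogarty} to reduce to $\Hilb_k(\mathbb{A}^3,0)$ via the isomorphism of completed local rings at a smooth point. Your additional remarks on why the reduction works (finite-length quotients are $\mm_x$-adically complete, and the identification is functorial) are a faithful expansion of what the paper leaves implicit.
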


    It was known that $\Hilb_k(\mathbb{A}^3)$ is irreducible for
    $k\leq 11$~\cite{DJNT, Henni, Sivic__Varieties_of_commuting_matrices}.
    However, this by no means implies irreducibility of $\Hilb_k(\mathbb{A}^3, 0)$.
    Indeed, the proof of irreducibility of $\Hilb_k(\mathbb{A}^3)$ as in~\cite{DJNT} proceeds by proving
    that every scheme $Z\subset \mathbb{A}^3$ is a degeneration of
    $\Gamma\subset \mathbb{A}^3$ which is a union of $k$ points. Obviously,
    the scheme $\Hilb_k(\mathbb{A}^3, 0)$ contains no such $\Gamma$, so the
    picture is as in Figure~\ref{fig:Hilbdiffs}. None of the known
    general techniques used to prove irreducibility of Hilbert schemes adapts
    effectively to analysing irreducibility of $\Hilb_k(\mathbb{A}^3, 0)$ and
    here we develop a new method.

    \begin{figure}[h]
        \centering
        \includegraphics[width=6cm]{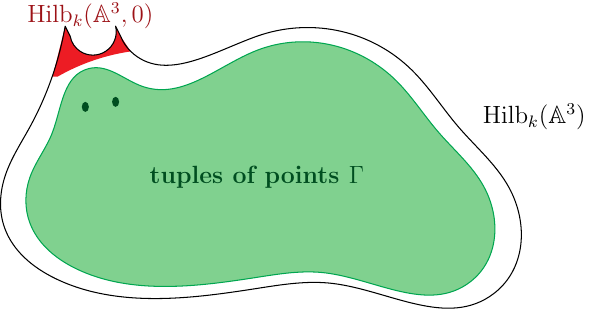}
        \caption{The schemes $\Hilb_k(\mathbb{A}^3)$ and
        $\Hilb_k(\mathbb{A}^3, 0)$, $k\leq 11$.}
        \label{fig:Hilbdiffs}
    \end{figure}

    \subsection{Proof ideas I: general and \BBname{}
slicing}\label{ssec:proofIdeas}
    The major part of the proof of Theorem~\ref{ref:irreducibility} is to
    prove an appropriate part of Theorem~\ref{ref:dimension:maintheorem}: to show
    that the dimension of $\Hilb_k(\mathbb{A}^3, 0)$ is at most $2(k-1)$ and
    additionally that there is at most one irreducible component of dimension
    $2(k-1)$. Since the dimension of $\Hilb_k(\mathbb{A}^3, 0)$
    at every point is \emph{at least} $2(k-1)$ by general nonsense, see
    Proposition~\ref{ref:componentsLowerBound:prop}, this implies that this
    component is the whole
    $\Hilb_k(\mathbb{A}^3, 0)$.

    However, bounding the dimension of $\Hilb_k(\mathbb{A}^3, 0)$ is hard.
    The points of $\Hilb_k(\mathbb{A}^3, 0)$ are
    quite impossible to classify for $k > 6$ and the number of cases is best described as
    swarming, see tables below. Moreover, in contrast with $\Hilb_k(\mathbb{A}^n)$, the scheme
    $\Hilb_k(\mathbb{A}^n, 0)$ has not been much investigated.
    Therefore, the only one readily available estimate is ``dimension at a
    point is at most the dimension of the tangent space''.
    But even this method has a serious caveat. Namely, it is
    known classically that for a zero-dimensional $Z \subset \mathbb{A}^n$ we
    have
    \[
        T_{[Z]}\Hilb_k(\mathbb{A}^n) = \Hom_{\KK[x_1, \ldots , x_n]}(I, \KK[x_1, \ldots ,x_n]/I),
    \]
    where $I = I(Z) \subset  H^0(\OO_{\mathbb{A}^n}) = \KK[x_1,
    \ldots,x_n]$.
    However, for $[Z]\in \Hilb_k(\mathbb{A}^n, 0)$ no such nice
    description of the subspace $T_{[Z]} \Hilb(\mathbb{A}^n, 0)$ seems to exist,
    see~\cite{Bejleri_Stapleton} for the surface case. Given
    that $\Hilb_k(\mathbb{A}^3)$ is irreducible for $k\leq 11$, we have
    $\dim_{\KK} \Hom(I, \KK[x_1, \ldots ,x_n]/I) \geq 3k$ which is not a
    useful estimate.
    Even worse, the locus $\Hilb_k(\mathbb{A}^3, 0)$ is the most singular part of
    $\Hilb_k(\mathbb{A}^3)$; indeed every zero-dimensional $Z\subset \mathbb{A}^3$ can
    be degenerated to a scheme supported only at the origin by taking the
    limit at zero of the torus action on $\mathbb{A}^3$ by scalar
    multiplication. This implies
    that the tangent spaces to the Hilbert scheme at points of
    $\Hilb_k(\mathbb{A}^3, 0)$ are frequently of dimension much higher than
    $3k$.

    To provide an upper bound on the dimension, we introduce a technique that
    we baptize informally as
    \emph{\BBname{} slicing}. It seems applicable much
    more generally, outside the realm of Hilbert schemes, whenever one has a
    singular moduli space with a torus action. We discuss it here for
    arbitrary $\mathbb{A}^n$, not just $n=3$.

    Fix a one-dimensional torus $\Gmult$ and its action on
    $\mathbb{A}^n$ by $t\cdot (x_1, \ldots ,x_n) = (tx_1, \ldots ,
    tx_n)$. Every point except the origin
    diverges with $t\to \infty$.
    We consider the \BBname{}-decomposition $\Hilb_k^{+}(\mathbb{A}^n)$ of the Hilbert
    scheme, which subdivides this scheme by grouping together points that have
    their limit at
    infinity in the same connected component of the fixed points~\cite{Drinfeld,
    jelisiejew_sienkiewicz__BB}. The precise definition of the
    \BBname{} decomposition will not be important for us, below we gather all
    its relevant properties.
    The \BBname{} decomposition comes with morphisms of schemes
    \begin{equation}\label{eq:maindiagram}
        \begin{tikzcd}
            \Hilb_k^{+}(\mathbb{A}^n)\ar[r, "\theta"]\ar[d, "\pi"] &
            \Hilb_k(\mathbb{A}^n)\\
            \Hilb_k^{\Gmult}(\mathbb{A}^n)
        \end{tikzcd}
    \end{equation}
    where $\theta$ is the forgetful map, while $\pi$ maps a subscheme to its
    limit at $t = \infty$. Since schemes supported
    outside the origin do not admit a limit at infinity,
     the image of $\theta$
    is, as a set, $\Hilb_k(\mathbb{A}^n, 0)$~\cite[Proposition~3.3]{Jelisiejew__Elementary}.
    The closed subscheme $\Hilb_k(\mathbb{A}^n,
    0)$ is $\Gmult$-stable, hence inherits a decomposition
    $\Hilb_k^{+}(\mathbb{A}^n, 0)$ and  the corresponding maps. This subscheme is
    also projective, see~\S\ref{sec:irreducibilitypreliminaries}, so the forgetful map
    $\Hilb_k^{+}(\mathbb{A}^n, 0)\to \Hilb_k(\mathbb{A}^n, 0)$ is bijective on
    points and hence
    \[
        \dim \Hilb_k^{+}(\mathbb{A}^n, 0) = \dim \Hilb_k(\mathbb{A}^n, 0).
    \]
    The map $\pi$ restricted to $\Hilb_k(\mathbb{A}^n, 0)$ has an explicit
    description as the associated-graded-algebra map, see~\S\ref{ssec:ideas2}
    for details. In particular, all elements of a fiber of $\pi$ share the
    same Hilbert function.

    The
    points of $\Hilb_k^{\Gmult}(\mathbb{A}^n)$ correspond to subschemes $Z
    \subset \mathbb{A}^n$ whose ideals are homogeneous with respect to the
    standard grading. Thus, this scheme has at least as many
    connected components as there are Hilbert functions $H$:
    \begin{small}
        \begin{center}
        \begin{tabular}{c c c c c c c c c c c c c c}
            $k$ & $1$ & $2$ & $3$ & $4$ & $5$ & $6$ & $7$ & $8$ & $9$ & $10$ &
            $11$ 
            \\\toprule
            number of Hilbert functions $H$, with arbitrary $H(1)$: & $1$ & $1$ & $2$ & $3$ & $5$ &
            $8$ & $12$ & $18$ & $27$ & $40$ & $57$ 
        \end{tabular}
    \end{center}
    \end{small}
    Possible $H$ are classified by Macaulay~\cite[\S4]{BrunsHerzog}
    and their number grows exponentially in $n$.
    For any fixed Hilbert function $H$ with $\sum H = k$, consider the
    open-closed locus $\HilbFuncGr{H}{n} \subset
    \Hilb_k^{\Gmult}(\mathbb{A}^n)$ parameterizing points $[\Spec(A)]$ where
    $A$ has Hilbert function $H$, we call it the \emph{graded stratum}
    associated to $H$.
    Let $\HilbFunc{H}{n} \subset\Hilb_k^{+}(\mathbb{A}^n, 0)$ denote the locus
    parameterizing points $[\Spec(A)]$ where $A$ is a local algebra with
    Hilbert function $H$, this is the \emph{stratum} associated to the Hilbert
    function $H$. By the description of $\pi$ as the associated-graded-algebra
    map, we have $\HilbFunc{H}{n} = \pi^{-1}(\HilbFuncGr{H}{n})$.
    Moreover, $(\HilbFunc{H}{n})^{\Gmult} =
    \HilbFuncGr{H}{n}$ as the notation suggests.
    In Figure~\ref{fig:BBdecomposition} we give a very schematic description
    of its \BBname{} decomposition, compare Figure~\ref{fig:Hilbdiffs}.
    We would like to stress that the division of points of $\Hilb_k(\mathbb{A}^n, 0)$
    into strata using the Hilbert function is a very classical idea. The contribution of the
    \BBname{} decomposition is to give this classical idea a functorial
    interpretation. One important product of this interpretation is a formula
    for tangent space to the stratum, which we discuss now.

    \begin{figure}[h]
        \centering
        \includegraphics[width=5cm]{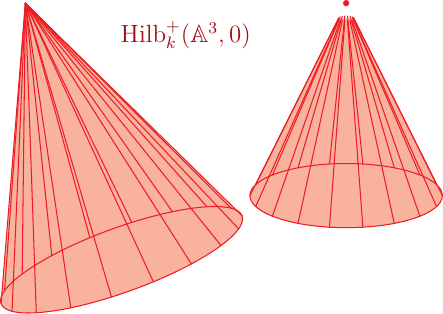}
        \caption{Schematic description of \BBname{} decomposition of the scheme $\Hilb^{+}_k(\mathbb{A}^n, 0)$.}
        \label{fig:BBdecomposition}
    \end{figure}

    For a $\Gmult$-fixed point
    $[Z]$ corresponding to a homogeneous ideal $I \subset R = \KK[x_1, x_2,  \ldots
    ,x_n]$ we have
    \[
        T_{[Z]}\Hilb_k^{+}(\mathbb{A}^n, 0) =
        (T_{[Z]}\Hilb_k(\mathbb{A}^n))_{\geq 0}  \simeq  \Hom_R(I, R/I)_{\geq
        0},
 \]
    where $\Hom(I, R/I)_{\geq 0}$ consists of homomorphisms $\varphi\colon I\to
    R/I$ such that $\varphi(I_i) \subset (R/I)_{\geq i}$ for every $i\geq 0$,~i.e., the homomorphisms that do not lower the degree, see for example~\cite[(2.2), (2.4)]{Jelisiejew__Elementary}.
    This restriction of the
    tangent space to its nonnegative part is the first key improvement. Indeed, when $n=3$, then $\dim_{\KK}\Hom(I,
    R/I)_{\geq0} \leq 2(k-1)$ for almost all homogeneous ideals,
    see~Proposition~\ref{ref:smallExceptionalStrata:prop} and the table below.
    To bound the dimension of the tangent spaces, we resort again to
    semicontinuity: choose a Borel subgroup $B \subset \GL_n$ that
    commutes with $\Gmult$. It acts
    on the components of the projective scheme
    $\Hilb^{\Gmult}_k(\mathbb{A}^n)$ and so by semicontinuity to bound the
    tangent space dimension
    (degree-wise) everywhere, it is enough to bound it for Borel-fixed points.
    Borel-fixed points correspond to monomial ideals, so there are finitely
    many of them for a fixed $k$. In fact, they correspond to very special monomial
    ideals~\cite{StronglyStableIdeals}. We list the numbers below, to provide
    some context.
    \begin{small}
    \begin{center}
    \begin{tabular}{c c c c c c c c c c c c c}
            \hfill $n=3$ \hfill $k = $ & $1$ & $2$ & $3$ & $4$ & $5$ & $6$ & $7$ & $8$ & $9$ & $10$ &
            $11$
            \\\toprule
            number of monomial ideals (MacMahon function) & $1$ & $3$ & $6$ &
            $13$ & $24$ & $48$ & $86$ & $160$ & $282$ & $500$ & $859$\\
           number of Borel-fixed ideals & $1$ & $1$ & $2$ & $3$ & $4$ & $6$ &
            $9$ & $12$ & $17$ & $24$ & $32$\\
            number of Borel-fixed ideals with $\dim T_{\geq 0}\geq 2(k-1)$ & $1$ & $1$ & $1$ &
            $1$ & $1$ & $1$ & $1$ & $2$ & $2$ & $4$ & $6$
        \end{tabular}
    \end{center}
\end{small}
    There remains a small number of special cases for $n = 3$ which have to be resolved by hand.
    There also remain many more problematic cases for $n > 3$, see below. To tackle those
    we need to employ other tools, such as the theory of
    Macaulay's inverse systems and partial classification results that will be discussed below.
    \begin{small}
    \begin{center}
    \begin{tabular}{c c c c c c c c c c c c c}
            \hfill $n = k$ \hfill $k = $ & $1$ & $2$ & $3$ & $4$ & $5$ & $6$ & $7$ & $8$ & $9$ & $10$ &
            $11$
            \\\toprule
            number of Borel-fixed ideals & $1$ & $1$ & $2$ & $3$ & $5$ & $8$ &
            $13$ & $20$ & $32$ & $50$ & $77$\\
            number of Borel-fixed ideals with $\dim T_{\geq 0}\geq (n-1)(k-1)$ & $1$ & $1$ & $1$ &
            $1$ & $1$ & $1$ & $1$ & $4$ & $8$ & $16$ & $33$
        \end{tabular}
    \end{center}
    \end{small}

    \emph{Dimension estimates.} For a $\Gmult$-fixed $[Z]$ given by a
    homogeneous $I\subset
    R = \KK[x_1, \ldots ,x_n]$ its tangent space
    \[
        T_{[Z]}:= \Hom_{R}(I, R/I)_{\geq 0}
    \]
    decomposes into the space $(T_{[Z]})_0$
    which is the tangent space to $\Hilb^{\Gmult}(\mathbb{A}^n)$
    and $(T_{[Z]})_{>0}$ which is the tangent space to the fiber over $[Z]$.
    Moreover, the fiber dimension of $\pi$ is upper-semicontinuous, because
    this map is a cone over a projective morphism, see~\cite[Proof of
    Theorem~3.1]{Szachniewicz} for details.
    Hence,
    we have the following estimates for the dimension of
    $\Hilb^+_k(\mathbb{A}^n, 0)$ near $[Z]$:
    \begin{enumerate}
        \item\emph{tangent space estimate} $\dim_{[Z]}
            \Hilb^+_k(\mathbb{A}^n, 0) \leq \dim_{\KK} \Hom_{R}(I,
            R/I)_{\geq 0}$,
        \item\emph{base-and-tangent-to-fiber estimate} $\dim_{[Z]}
            \Hilb^+_k(\mathbb{A}^n, 0) \leq
            \dim_{[Z]}\Hilb^{\Gmult}(\mathbb{A}^n) + \dim_{\KK} \Hom_{R}(I, R/I)_{>0}$,
        \item\emph{tangent-to-base-and-fiber estimate} $\dim_{[Z]}
            \Hilb^+_k(\mathbb{A}^n, 0) \leq
            \dim_{\KK} \Hom_{R}(I, R/I)_0 + \dim_{[Z]}
            \pi^{-1}([Z])$,
        \item\emph{base-and-fiber estimate} $\dim_{[Z]}
            \Hilb^+_k(\mathbb{A}^n, 0) \leq
            \dim_{[Z]}\Hilb^{\Gmult}(\mathbb{A}^n) + \dim_{[Z]}
            \pi^{-1}([Z])$,
    \end{enumerate}

    In most of the remaining cases, these observations are sufficient.
    In few, we need to resort to the following observation, which we call \emph{local
    constancy}. If $[Z]$ is $\Gmult$-fixed and a smooth point of
    $\Hilb^+_k(\mathbb{A}^n, 0)$ then the map $\pi$ is smooth at $[Z]$ and so is
    $\Hilb_k^{\Gmult}(\mathbb{A}^n)$. In particular $\dim_{\KK}\Hom_{R}(I,
    R/I)_{>0}$ and $\dim_{\KK}\Hom_{R}(I, R/I)_0$ are constant in a
    neighbourhood of $[Z]$.
    This is used to rule out the cases where the tangent space has a surplus
    of exactly one with respect to the expected dimension: if
    the dimension of the stratum is indeed higher than expected, then the point has to be smooth and
    we can apply the above observation and find a witness of non-constancy.

    \subsection{Proof ideas II: explicit computations}\label{ssec:ideas2}
    Finally, there are much fewer but still quite a few cases where $\dim_{\KK}\Hom_R(I, R/I)_{\geq 0}$ is
    higher than $(n-1)(k-1)+1$ and for these we need to peek deeper into the
    structure of the stratum.
    We can rephrase the
    Diagram~\ref{eq:maindiagram} more concretely
    on the level of closed points as
    \begin{equation}\label{eq:mainconcrete}
        \begin{tikzcd}
            \bigsqcup_{H: \sum H = k}\left\{ I \subset R \ |\ \rad(I) =
                (x_1, \ldots ,x_n), H_{R/I} = H \right\}
                \ar[r, hook, "\mathrm{bij}"]\ar[d, "\gr"] & \left\{ I\subset R\ |\
                \rad(I) =
                (x_1, \ldots ,x_n),\ \dim_{\KK} R/I = k\right\}\\
                \bigsqcup_{H: \sum H = k}\left\{ I \subset R \ |\ I \mbox{
                graded, } H_{R/I} = H \right\}
        \end{tikzcd}
    \end{equation}
    where $R = \KK[x_1, \ldots ,x_n]$.
    Using this description, we see that  for a
    homogeneous ideal $I$, the fiber of $\pi$ over $[I]$ consists
    of all ideals having $I$ as the initial ideal, hence it is possible to
    describe the fiber explicitly. It is not quite true that such a description is
    straightforward, and
    in fact, it is not, especially since in many cases the fiber is very
    singular. To facilitate it we resort to a number of tricks and
    classification theorems that are too technical to describe here.
    We attempt to present only one, which can be called \emph{vertex removal}.

    Typically, the graded stratum
    $\HilbFuncGr{H}{n}$ contains small substrata consisting of highly singular
    points.
    The whole torus $\Gmult^n$ acts on $\HilbFunc{H}{n}$ and
    $\HilbFuncGr{H}{n}$, so we can fix a one-dimensional torus $T\subset\Gmult^n$
    and consider the induced decompositions.
    An example of such is the left vertex in Figure~\ref{fig:BBdecomposition}.
    The trick is to choose the weights of this other $T$ appropriately, so
    that near
    the vertex the action is divergent, as on the right part of
    Figure~\ref{fig:BBdecomposition}.
    Then the dimension of the non-vertex part can be bounded by using tangent
    spaces, while the dimension of the vertex part can be bounded directly,
    since this part becomes ``small'';  see
    Proposition~\ref{ref:14321final:prop} for an example of such an argument.

    \subsection*{Notation for the Hilbert schemes}
    In the table below, we summarize the various loci appearing in this
    article. See~\S\ref{sec:hilbertSchemePrelims} for details.

\begin{tabular}{c c}
    object & symbol\\
    \toprule
    The Hilbert scheme of $k$ points & $\Hilb_k(\mathbb{A}^n)$\\
    The punctual Hilbert scheme & $\Hilb_k(\mathbb{A}^n, 0)$\\
    The locus of algebras with socle dimension $\leq \tau$ & $\Hilb_k^{\tau}(\mathbb{A}^n, 0)$\\
    The locus of homogeneous ideals & $\Hilb_k^{\Gmult}(\mathbb{A}^n,0)$\\
    The locus of homogeneous ideals with fixed Hilbert function $H$ &
    $\Hilb_H^{\Gmult}(\mathbb{A}^n,0)$\\
    The \BBname{} decomposition of the punctual Hilbert scheme &
    $\Hilb_k^{+}(\mathbb{A}^n, 0)$\\
    The locus with fixed Hilbert function $H$ & $\HilbFunc{H}{n}$\\
    The locus of Gorenstein algebras with fixed Hilbert function $H$ &
    $\HilbFuncGor{H}{n}$
\end{tabular}
\goodbreak

    \section*{Acknowledgements}

    The authors would like to thank Jaros{\l{}}aw Buczy{\'n}ski and Tadeusz
    Januszkiewicz for sharing their idea about extending the notion of
    regularity to Grassmannians and for the permission to pursue this idea
    here. The authors are also very grateful to the referees for careful
    reading, catching many typos
    and suggesting many
    improvements to the original text.
    The first author is supported by Polish NCN grant
    2020/39/D/ST1/00132.

\section{Preliminaries}
In this section, we collect facts and notations regarding the
Hilbert scheme, the Hilbert function and Macaulay's inverse systems, used
throughout this article. As already stated in the introduction, throughout we work over an
algebraically closed field $\KK$ of characteristic zero.  All computations
below and in the introduction are available as an \emph{Macaulay2} ancillary
file for the arXiv version of this paper.

\subsection{Socle and Hilbert function}\label{sec:hilbertFunction}
\newcommand{\hatR}{\widehat{R}}
Let $\hatR=\KK[[x_1, \ldots ,x_n]]$ be the power series ring in $n$ variables with the maximal ideal $\mm=(x_1, \ldots,x_n)$.
Let $A=\hatR/I$ be a local Artin ring for an $\mm$-primary ideal $I\subset
\hatR$,
with the unique maximal ideal $\mathfrak{n}=\mm/I$ and residue field $\KK$.
Recall that an \emph{$\mm$-primary ideal} is just an ideal $I$ such that
$I\supset \mm^r$ for some $r$. Since for every $r$ we have
canonically
\begin{equation}\label{eq:ringchange}
    \frac{\hatR}{\mm^r}  \simeq
    \frac{R}{(x_1, \ldots,x_n)^r}
\end{equation}
we could view $A$ as a quotient of $R = \KK[x_1, \ldots ,x_n]$ by an ideal $I$
satisfying $I\supset (x_1, \ldots ,x_n)^r$ for some $r$.

\textit{The socle} $\Soc(A)$ of $A$ is the annihilator of the maximal ideal in $A$.
The socle is a $\KK$-vector space. \textit{The socle dimension} of $A$ is
defined by $\tau(A):=\dim_{\KK}\Soc(A)$. The ring $A$ is
\emph{Gorenstein} if
$\tau(A)=1$, see for example~\cite[Chapter~21]{EisView}.
The \emph{associated graded ring of $A$}, denoted $\gr(A)$, is the vector space
$\bigoplus_{i\geq
0}\mathfrak{n}^i/\mathfrak{n}^{i+1}$ with natural multiplication. When $A
\simeq
\KK[x_1, \ldots ,x_n]/I$, where $I\supset (x_1, \ldots ,x_n)^r$, then $\gr(A)
\simeq \KK[x_1, \ldots ,x_n]/\inn(I)$, where $\inn(I)$ is the
ideal
generated by the smallest degree forms of elements of $I$.
We have
$\tau(\gr A) \geq \tau(A)$ and typically strict inequality occurs.

\begin{definition}
    The \emph{Hilbert function} $H_A\colon\mathbb{N}\rightarrow \mathbb{N}$ of a local ring $A$
with a maximal ideal $\mathfrak{n}$ is defined by $H_A(i):=\dim_{\KK}
\mathfrak{n}^i/\mathfrak{n}^{i+1}$. It is the Hilbert function of its
associated graded ring.
\end{definition}

Let $s$ denote the largest integer such that $\mathfrak{n}^s\neq 0$, the
so-called \textit{socle degree} of $A$. The Hilbert function of $A$ can be
then represented by a vector $ H_A=(1,H_A(1),\ldots, H_A(s))$ or a series
$\sum_{i=0}^s H_A(i)T^i$. Moreover, since $\mathfrak{n}^{s}\subset \Soc(A)$, we
get $H_A(s)\leq \tau(A)$. We define the Hilbert function of an
$A$-module similarly.

\subsection{The Hilbert scheme of points and its
subloci}\label{sec:hilbertSchemePrelims}

    The Hilbert scheme $\Hilb_k(\mathbb{A}^n)$ of $k$ points on $\mathbb{A}^n$
    is an open subscheme of the projective scheme $\Hilb_k(\mathbb{P}^n)$.
    Numerous nice introductions to Hilbert scheme of points
    exist, for example~\cite[Chapters~5-6]{fantechi_et_al_fundamental_ag},
    \cite{Stromme_Hilbert, Bertin__punctual_Hilbert_schemes}. Here we only
    collect the facts which will be useful in the article.

    Let $R \simeq \KK[x_1, \ldots ,x_n]$. The \emph{degree} of a zero-dimensional subscheme $Z
    = \Spec(R/I)$ of
    $\mathbb{A}^n$ is $\deg(Z) := \dim_{\KK} (R/I)$.
    Closed points of $\Hilb_k(\mathbb{A}^n)$ correspond bijectively
    to zero-dimensional closed subschemes $Z \subseteq \mathbb{A}^n$ of degree
    $k$, or, in other words, to ideals $I\subseteq R$ with
    $\dim_{\KK} R/I = k$. We write $[Z]$ or $[I]$ for the point corresponding
    to a subscheme $Z$ or ideal $I$.
    The tangent space at a closed point $[\Spec(R/I)]\in
    \Hilb_k(\mathbb{A}^n)$ is given by $\Hom_{R}(I, R/I)$, see for
    example~\cite[Theorem~10.1]{Stromme_Hilbert}. The locus of $[Z]\in
    \Hilb_k(\mathbb{A}^n)$ with $Z$ smooth is open in the Hilbert scheme. Its
    closure is called the \emph{smoothable component}, we denote it by
    $\HilbSmk(\mathbb{A}^n)$.

    A zero-dimensional subscheme $Z$ is a finite disjoint union $Z_1\sqcup
    \ldots \sqcup Z_{e}$, where the summands are irreducible
    subschemes, so, as a set, every $Z_i$ is a singleton
    $p_i\in \mathbb{A}^n$. The \emph{support} of $Z$ is a formal sum
    $\sum_{i=1}^e \deg(Z_i)p_i$.
Let $\Sym_k(\mathbb{A}^n) = (\mathbb{A}^n)^k \goodquotient \mathbb{S}_k$ be the $k$-fold symmetric product of
$\mathbb{A}^n$. The Hilbert-Chow morphism $\rho_k\colon
\Hilb_k(\mathbb{A}^n)\to \Sym_k(\mathbb{A}^n)$ sends a zero-dimensional scheme
$Z\subset \mathbb{A}^n$ to its
support~\cite[Theorem~2.16]{Bertin__punctual_Hilbert_schemes}. It
extends to a morphism $\bar{\rho}_k\colon \Hilb_k(\mathbb{P}^n)\to
\Sym_k(\mathbb{P}^n)$ and we have a cartesian diagram
\[
    \begin{tikzcd}
        \Hilb_k(\mathbb{A}^n)\ar[r, "\rho_k"]\ar[d, hook, "open"] &
        \Sym_k(\mathbb{A}^n)\ar[d, hook, "open"]\\
        \Hilb_k(\mathbb{P}^n) \ar[r, "\bar{\rho}_k"] & \Sym_k(\mathbb{P}^n)
    \end{tikzcd}
\]
The \emph{punctual Hilbert scheme} is defined as $\Hilb_k(\mathbb{A}^n, 0) :=
\rho_k^{-1}(d[0]) = \bar{\rho}_k^{-1}(d[0])$, where $0\in \mathbb{A}^n$ is the
origin. As a topological space, the punctual Hilbert scheme consists of subschemes supported only at the
origin. The description by $\bar{\rho}_k$ implies that this scheme is
projective.
We obtain the corresponding locus
\[
    \HilbSmk(\mathbb{A}^n, 0) := \Hilb_{k}(\mathbb{A}^n, 0)\cap
    \HilbSmk(\mathbb{A}^n).
\]

The invariants discussed in~\S\ref{sec:hilbertFunction} give rise to loci in
the punctual Hilbert scheme which we now formally introduce.
For $[I]\in \Hilb_k(\mathbb{A}^n, 0)$ the socle dimension of $R/I$ is the
minimal number of generators of the canonical module $\omega_{R/I}$, so the
sublocus $\Hilb_k^{\tau}(\mathbb{A}^n, 0)\subseteq \Hilb_k(\mathbb{A}^n,0)$
that consists of $[I]$ with
$\tau(R/I)\leq \tau$ is open.
For every $i$, the dimension
$\dim_{\KK} R/(I+R_+^i)$ is upper-semicontinuous as $[I]$ varies in the punctual
Hilbert scheme, hence for a given $H\colon \mathbb{N}\to \mathbb{N}$ the locus
of $[I]$ with $H_{R/I} = H$ is locally closed. We denote it by
$\HilbFuncTheta{H}{n}$.

For generalities on the \BBname{} decomposition $\Hilb_k^+(\mathbb{A}^n)$ of the Hilbert scheme, we
refer the reader to~\cite{Jelisiejew__Elementary}. The \BBname{} decomposition
$\Hilb_k^+(\mathbb{A}^n, 0)$ of the punctual Hilbert scheme can be formally
defined using the machinery of~\cite{jelisiejew_sienkiewicz__BB}.
It follows from~\cite[Proposition~3.3]{Jelisiejew__Elementary} that the
inclusion $\Hilb_k^+(\mathbb{A}^n, 0)\into \Hilb_k^+(\mathbb{A}^n)$ is an
isomorphism, but we will not use this fact in the present article. The
restriction of the map $\theta$ from the introduction is a map
$\Hilb^+_k(\mathbb{A}^n, 0)\to \Hilb_k(\mathbb{A}^n, 0)$ which is bijective on
points (but not an isomorphism!). We identify the closed points of those two
schemes.
There is a morphism $\pi\colon \Hilb_k^+(\mathbb{A}^n, 0)\to
\Hilb_k^{\Gmult}(\mathbb{A}^n, 0)$ which on the level of points sends $[I]$ to
the initial ideal $[\inn(I)]$.
We define $\HilbFunc{H}{n}$ as $\pi^{-1}(\HilbFuncGr{H}{n})$. The forgetful
map $\Hilb^+_k(\mathbb{A}^n, 0)\to \Hilb_k(\mathbb{A}^n, 0)$ restricts to a
map $\HilbFunc{H}{n}\to \HilbFuncTheta{H}{n}$ bijective on points and so
\[
    \dim \HilbFunc{H}{n} = \dim \HilbFuncTheta{H}{n};
\]
in fact those two schemes can be thought of as two scheme structures on the
same set of closed points.
We have a much better control on the tangent space of
$\HilbFunc{H}{n}$, so we will mostly consider this one.
We define the open sublocus
\[
    \HilbFuncGor{H}{n} := \HilbFunc{H}{n}\cap \Hilb_k^{1}(\mathbb{A}^n, 0).
\]
The scheme $\HilbFuncGr{H}{n}$ is a union of connected components of
$\Hilb^{\Gmult}_k(\mathbb{A}^n, 0)$, so the scheme $\HilbFunc{H}{n}$
is a
union of connected components of $\Hilb_k^+(\mathbb{A}^n, 0)$.
We call this union the \emph{stratum} corresponding to $H$, or, when an
ideal $I$ with $H_{R/I} = H$ is given, the \emph{stratum} of $I$. (We
believe that $\HilbFuncGr{H}{n}$ is connected, so the stratum is connected as
well. However, this result seems
to be not present explicitly in the literature and it is not important for the
current article.)

Let $[I]\in \Hilb^{\Gmult}_k(\mathbb{A}^n)$. Since
$I$ is a homogeneous ideal, the tangent space at $[I]\in \Hilb_k(\mathbb{A}^n)$ has a structure of a graded
$R$-module $\Hom_R(I,R/I)=\bigoplus_{d\in \mathbb{Z}}\Hom_R(I,R/I)_d$,
with $\Hom_R(I,R/I)_d$
    consisting of all homomorphisms $\varphi\colon I\to R/I$ of $R$-modules such that $\varphi(I_i) \subset
    (R/I)_{i+d}$ for all $i$. We set
    \[
        \Hom_R(I,R/I)_{\geq 0} := \bigoplus_{i\geq 0} \Hom_R(I,R/I)_i\quad
        \mbox{and}\quad \Hom_R(I,R/I)_{>0} := \bigoplus_{i>0}
        \Hom_R(I,R/I)_i.
    \]
By~\cite[Theorem~4.2]{Jelisiejew__Elementary} the tangent space at $[I]\in
\Hilb_k^+(\mathbb{A}^n,0)$ is isomorphic to $\Hom_{R}(I, R/I)_{\geq 0}$. By a similar
argument, the tangent space at $[I]$ to $\pi^{-1}([I])$ is $\Hom_{R}(I,
R/I)_{>0}$.

Although the tangent space $\Hom_{R}(I, R/I)$ can be computed by hand, we usually use the
    computer algebra system \textit{Macaulay2} to compute it quickly in
    explicit cases, most of them related to monomial ideals. See
    Proposition~\ref{ref:tangentSpacesToStrata:prop} for an example of
    such a computation.

\subsection{Macaulay's Inverse Systems}
Let $S=\KK[y_1,\ldots,y_n]$ be the polynomial ring with $n$ variables which we
view as a vector space.
Recall the rings $R = \KK[x_1, \ldots ,x_n]\subset \hatR = \KK[[x_1, \ldots
,x_n]]$.
The
ring $\hatR$ acts on $S$ by the partial derivation map $\circ\colon\hatR\times S \rightarrow S$
defined as follows:
\[
    x^\alpha\circ y^\beta:=\begin{cases}
        \binom{\beta}{\alpha} y^{\beta-\alpha} & \beta\geq  \alpha,\\
        0 & \mbox{otherwise},
    \end{cases}
\]
where $\alpha=(\alpha_1,\ldots,\alpha_n)$ and $\beta=(\beta_1,\ldots,\beta_n)$
are vectors in $\mathbb{N}^n$, $\binom{\beta}{\alpha} = \prod_{i=1}^n
\binom{\beta_i}{\alpha_i}$, and $\beta\geq \alpha$ if  $\beta_i\geq
\alpha_i$ for all $1\leq i\leq n$. With this action, $S$ can be viewed as an
$\hatR$-module. In fact, the partial derivation map is up to scalars equivalent to the
action of $\hatR$ on $S$ by contraction, see for
example~\cite[Appendix~A]{iakanev}, because we are over a field of
characteristic zero.
We will mostly work with the polynomial ring $R$ rather than the whole
$\hatR$, thus below we restrict to the $R$-action on $S$. This is mostly a
formal choice.
\begin{definition}
Let $I\subset R$ be an $(x_1, \ldots ,x_n)$-primary ideal of $R$. The
Macaulay \emph{inverse system of $I$} is
\[
    I^{\perp}=\{ g\in S: \ I\circ g= 0\}.
\]
This is an $R$-submodule of $S$. Its generators are called
\emph{dual generators} of $I$.
Given a subset $E \subset
S$, the annihilator of $E$ is the ideal
\[
    \Ann_{\hatR}(E)=\{f\in R: \ f\circ E=0\},
\]
which is called the \emph{apolar ideal} of $E$.
\end{definition}

Note that $\Ann_{R}(E) = \Ann_{R}(M)$, where $M = R\circ E$ is the
$R$-submodule
of $S$ generated by $E$. If $I\subseteq R$ is a homogeneous ideal, then
$I^{\perp}$ is spanned by homogeneous polynomials, and if $E$ is spanned by
homogeneous elements, then $\Ann_R(E)\subseteq R$ is homogeneous. Also, if $I\supset (x_1, \ldots ,x_n)^r
$,  then $I^{\perp} \subset S_{<r}$.
We will often abbreviate $\Ann_R(-)$ to $\Ann(-)$ when the ring is clear from
the context.

The above constructions are
justified by the following theorem.
\begin{theorem}[Macaulay's duality]
    For every $(x_1, \ldots ,x_n)$-primary ideal $I$ of $R$ and finitely generated
    $R$-submodule $M\subset S$ we have $\Ann_{R}(I^{\perp}) = I$ and
    $(\Ann_{R}(M))^{\perp} = M$. In this way, the operations $\perp$ and
    $\Ann_{R}(-)$ give
    an inclusion-reversing bijection between finitely generated $R$-submodules of
    $S$ and $(x_1, \ldots ,x_n)$-primary ideals of $R$. Moreover, the $R$-module
    $I^\perp$ is minimally generated by $\tau(R/I)$ elements.
    A zero-dimensional $A=R/I$ is Gorenstein of socle degree $s$ if and only
    if $I^\perp$ is a principal $R$-module generated by a polynomial of degree $s$.
\end{theorem}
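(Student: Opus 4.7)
The plan is to realize $S_{<r}$ as the $\KK$-linear dual of $R/\mm^r$ via the contraction pairing and deduce everything from finite-dimensional linear algebra. First I would define $\langle f,g\rangle\in\KK$ to be the constant term of $f\circ g$; since $\langle x^\alpha,y^\beta\rangle=\delta_{\alpha,\beta}$, the restriction $R/\mm^r\times S_{<r}\to\KK$ is a perfect pairing for every $r$, and the adjoint identity $\langle hf,g\rangle=\langle h,f\circ g\rangle$ transports the $R$-action on $S$ through the pairing.

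Given this setup, the rest is largely bookkeeping. For an $\mm$-primary $I\supset\mm^r$, the set $I^{\perp}$ is an $R$-submodule of $S_{<r}$ because $(hf)\circ g=0$ whenever $hf\in I$ and $g\in I^{\perp}$; under the perfect pairing, $I^{\perp}\subset S_{<r}$ is exactly the $\KK$-linear annihilator of $I/\mm^r\subset R/\mm^r$, so $\dim_{\KK} I^{\perp}=\dim_{\KK} R/I$ and $I^{\perp}$ is a fortiori finitely generated. The identities $\Ann_R(I^{\perp})=I$ and $(\Ann_R(M))^{\perp}=M$ then follow from adjointness: $f\in R$ satisfies $f\circ g=0$ for all $g\in I^{\perp}$ iff $\langle hf,g\rangle=0$ for every $h\in R$ and $g\in I^{\perp}$, iff $hf\in I$ for all $h$ by perfectness, iff $f\in I$; the other identity is symmetric. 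This immediately yields the order-reversing bijection.

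For the count of generators I would apply Nakayama's lemma and reduce to computing $\dim_{\KK} I^{\perp}/\mm I^{\perp}$. The induced perfect pairing $R/I\times I^{\perp}\to\KK$ identifies the $\KK$-orthogonal of $\mm I^{\perp}$ inside $R/I$ with the set of $h\in R/I$ such that $\langle hf,g\rangle=0$ for every $f\in\mm$ and $g\in I^{\perp}$, which by perfectness means $hf\in I$ for all $f\in\mm$, i.e., $h\in (I:\mm)/I=\Soc(R/I)$. Hence $\dim_{\KK} I^{\perp}/\mm I^{\perp}=\tau(R/I)$. Finally, in the Gorenstein case $\tau=1$ one has $I^{\perp}=R\cdot g$ for a single polynomial $g$; the largest degree appearing in $R\cdot g$ equals $\deg g$, while the socle degree $s$ of $R/I$ is the largest $i$ with $\mm^i\not\subset I$, equivalently the largest $i$ with $I^{\perp}\not\subset S_{<i}$, so $\deg g=s$.

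The statement is classical and the proposed argument is standard; no genuine obstacle appears. The only point requiring care is keeping two notions of orthogonality straight — the $R$-module annihilator $\Ann_R(-)$ on one side, and the $\KK$-linear annihilator under $\langle\,,\,\rangle$ on the other — and using the adjointness $\langle hf,g\rangle=\langle h,f\circ g\rangle$ systematically to pass between them.
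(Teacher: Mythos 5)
Your argument is correct: the paper states Macaulay's duality as a classical fact and gives no proof, and your realization of $S_{<r}$ as the $\KK$-linear dual of $R/\mm^r$ via the pairing $\langle f,g\rangle = $ constant term of $f\circ g$, together with the adjointness $\langle hf,g\rangle=\langle h,f\circ g\rangle$ and Nakayama for the generator count, is the standard and complete route. The only step worth making explicit is that in proving $(\Ann_R(M))^{\perp}=M$ the "symmetric" direction uses that $M$ is an $R$-submodule (so that $x^{\gamma}\circ g\in M$, letting you conclude $f\circ g=0$ from $\langle f, x^{\gamma}\circ g\rangle=0$ for all $\gamma$), which is exactly where finitely generated $\KK$-subspaces that are not $R$-stable drop out of the bijection.
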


\begin{proposition}[parameterizing by polynomials vs by the Hilbert
    scheme]\label{ref:polynomialsAndGorensteinAlgebras:prop}
    For a Hilbert function $H$ with $k = \sum H$ consider the set $\mathcal{L}_H$ of all polynomials $f$ such
    that $R/\Ann_{R}(f)$ has Hilbert function $H$; this is a locally closed
    subset of $S_{<k}$. Then there is a surjective map
    $\mathcal{L}_H\to \HilbFuncGor{H}{n}$ whose fibers are
    $k$-dimensional. In particular, we have
    \[
        \dim \HilbFuncGor{H}{n} = \dim \mathcal{L}_H - k.
    \]
\end{proposition}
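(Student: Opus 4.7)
The plan is to realize the map $\varphi\colon\mathcal{L}_H \to \HilbFuncGor{H}{n}$, $f\mapsto [R/\Ann(f)]$, as a torsor under a smooth affine group scheme of relative dimension $k$; all three claims (surjectivity, $k$-dimensional fibers, smoothness) then follow at once.

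First I would set up $\varphi$ as a morphism. The subset $\mathcal{L}_H\subset S_{<k}$ is locally closed because the dimensions $\dim_{\KK}\mathfrak{m}^i\circ f$, which equal the cumulative sums of the Hilbert function of $R/\Ann(f)$, are upper-semicontinuous in $f$ as ranks of contraction matrices; fixing the Hilbert function exactly is thus a locally closed condition. Over the universal polynomial $F \in S_{<k}\otimes\cO_{\mathcal{L}_H}$ one forms the universal ideal $\mathcal{I}=\Ann(F)$ and obtains a flat family of Gorenstein quotients of Hilbert function $H$, which classifies to $\varphi$. Surjectivity is immediate from Macaulay's duality: every $[R/I]\in\HilbFuncGor{H}{n}$ has principal $I^{\perp}=R\cdot f$, and any such generator $f$ satisfies $\Ann(f)=I$ and lies in $\mathcal{L}_H$.

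Next, I would identify the fibers. Over a point $[R/I]$, the fiber of $\varphi$ consists of the cyclic $R$-module generators of $I^{\perp}\simeq A:=R/I$, which are precisely the units $A^{\times}$, an open $k$-dimensional subscheme of $A$. Globally this gives $\mathcal{A}^{\times}\subset\mathcal{A}$, a smooth affine group scheme of relative dimension $k$ over $\HilbFuncGor{H}{n}$. The action $\mathcal{A}^{\times}\times_{\HilbFuncGor{H}{n}}\mathcal{L}_H\to\mathcal{L}_H$, $(a,f)\mapsto \tilde a\circ f$, where $\tilde a\in R$ is any lift of $a$ (well-defined since $\Ann(f)\circ f=0$), preserves $\Ann(f)$ because $a$ is a unit, is free because $\tilde a\circ f=f$ forces $\tilde a-1\in\Ann(f)$ so $a=1$ in $A$, and is transitive on fibers because $\Ann(f)=\Ann(g)$ implies $R\cdot f=R\cdot g$ by Macaulay duality, giving $g=\tilde a\circ f$ with $\tilde a$ necessarily a unit modulo $\Ann(f)$.

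To upgrade this to a scheme-theoretic torsor and conclude smoothness, I would produce local sections of $\varphi$. The universal inverse system sheaf $\mathcal{I}^{\perp}$ is an invertible $\mathcal{A}$-module — the standard invertibility of the relative dualizing module in a flat Gorenstein family, reformulated via contraction — so after shrinking to a small open $U\subset\HilbFuncGor{H}{n}$ one can trivialize it and pick a generator, yielding a section of $\varphi$ over $U$. On $U$ the torsor becomes trivial, isomorphic to $U\times_{\HilbFuncGor{H}{n}}\mathcal{A}^{\times}$, whence $\varphi$ is smooth with $k$-dimensional fibers and the dimension formula $\dim\HilbFuncGor{H}{n} = \dim\mathcal{L}_H - k$ follows. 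The main obstacle I expect is this final step: verifying rigorously that $\mathcal{I}^{\perp}$ is an invertible $\mathcal{A}$-module in families (not merely a cyclic $R$-module pointwise), so that local sections of $\varphi$ exist. Once this is in place, the torsor structure and smoothness are formal consequences.
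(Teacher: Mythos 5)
Your proposal is correct in substance and, for two of its three ingredients, coincides with the paper's proof: the paper likewise obtains the morphism from a finite flat family over $\mathcal{L}_H$ (citing \cite[Proposition~2.12]{jelisiejew_VSP}) and identifies the fiber over $[\Spec(R/I)]$ with the unit group of $R/\Ann(f_0)$, parameterizing the other dual generators as $r\circ f_0$ with $r$ a unit chosen modulo $\Ann(f_0)$ --- exactly your free and transitive $A^{\times}$-action. Where you genuinely diverge is the proof of smoothness: the paper deduces it from an infinitesimal lifting property established in \cite[Proposition~2.10]{jelisiejew_VSP}, whereas you exhibit the map Zariski-locally as a trivial torsor under the relative unit group scheme, manufacturing local sections from the invertibility of the universal inverse system (equivalently, of the relative dualizing module $\mathcal{H}om_{\cO}(\pi_*\cO_{\mathcal{Z}},\cO)$ of the finite flat Gorenstein family). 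Your route is more structural and self-contained, giving a local product description rather than just formal smoothness, but it carries the burden you correctly flag: one must verify that this module is invertible over the family's structure sheaf, sits inside $S_{<k}\otimes\cO$ compatibly with base change (which follows since $\pi_*\cO_{\mathcal{Z}}$ is locally free, so dualizing the presentation by $R/\mm^k\otimes\cO$ stays exact), and that a local generator of it lands in $\mathcal{L}_H$; this is standard but is precisely the content the paper outsources to the lifting-property citation. Both arguments yield the same $k$-dimensional fibers and hence the dimension formula.
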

\begin{proof}
    For a polynomial $f\in S_{<k}$ to fix the Hilbert function of
    $R/\Ann_{R}(f)$
    is the same as fixing the dimensions of the spaces $\mm^i\circ f$ for every
    $i$. This shows that $\mathcal{L}_H$ is locally closed.
    The surjectivity and the claim about fiber dimensions are classical, see
    for example~\cite {emsalem, iarrobino_associated_graded}.
\end{proof}

\subsection{Lower bounds on dimension of components of $\HilbSmk(\mathbb{A}^n,
0)$}\label{sec:irreducibilitypreliminaries}

In this section, we prove that every component of $\HilbSmk(\mathbb{A}^n, 0)$
has \emph{at least} the excepted dimension $(n-1)(k-1)$. We being with a
general observation.

\begin{proposition}[weak purity of the total branch
    locus]\label{ref:purity:prop}
    Let $f\colon X\to S$ be a finite flat degree $d$ morphism of schemes locally of
    finite type over $\KK$. Assume $f$ is generically \'etale. Consider the locus
    \[
        \mathcal{Z} := \left\{ s\in S\ |\ \mbox{ the geometric fiber } f^{-1}(s)\mbox{ has only one point} \right\}
    \]
    This is a closed subset of $S$ and for every $z\in \mathcal{Z}$ we have
    $\dim_{z} \mathcal{Z} \geq \dim_z S - (d-1)$.
\end{proposition}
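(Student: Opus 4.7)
Proof plan. Since the statement concerns $\dim_z \mathcal{Z}$, I first localize: at $z$ and its (unique, because $z\in\mathcal{Z}$) preimage $x$ in $X$, I may assume $S=\Spec A$ with $(A,\mathfrak{m}_z)$ Noetherian local, and $X=\Spec B$ with $B$ a local ring, free of rank $d$ as an $A$-module. In these terms $\mathcal{Z}=\{s\in\Spec A\ :\ B\otimes_A\kappa(s)\text{ is local}\}$. Closedness of $\mathcal{Z}$ is then a standard semicontinuity statement for finite flat families: the number of idempotents of the fiber algebra (after base change to an algebraic closure) is lower semicontinuous, since idempotents can merge but never split under specialization, so the locus where this count equals one is closed.

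The dimension bound is the real content. My plan is to exhibit $\mathcal{Z}$ locally near $z$ as the common zero locus of $d-1$ elements $f_1,\ldots,f_{d-1}\in\mathfrak{m}_z$; Krull's Hauptidealsatz then yields $\dim_z\mathcal{Z}\geq\dim_z S-(d-1)$. The natural route to such equations is via characteristic polynomials: for any local section $b$ of $f_*\mathcal{O}_X$, the monic $\chi_b(T)\in A[T]$ of degree $d$ is a perfect $d$-th power $(T-c)^d$ in the fiber at $s$ precisely when the eigenvalues of multiplication by $b_s$ on $B_s$ all coincide, and $B_s$ is local if and only if every element has this single-eigenvalue property. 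In the favourable monogenic case $B_z=\kappa(z)[b]$, requiring $\chi_b$ to be a perfect $d$-th power imposes exactly $d-1$ equations on $A$ (after solving $c=-a_{d-1}/d$ from the subleading coefficient), and a Cayley--Hamilton plus Nakayama argument shows these equations define $\mathcal{Z}$ on a neighborhood.

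The main obstacle is the non-monogenic case, typical for $d\geq 3$, where no single $b\in B$ generates $B_z$ over $\kappa(z)$. To handle it, my plan is to use the classifying morphism $\phi\colon S\to\Hilb_d(\mathbb{A}^N)$ coming from a local closed embedding $X\hookrightarrow\mathbb{A}^N\times S$, so that $\mathcal{Z}=\phi^{-1}(\mathcal{C})$ with $\mathcal{C}\subset\Hilb_d(\mathbb{A}^N)$ the closed locus of subschemes supported at a single point. A dimension count shows $\mathcal{C}$ has the expected codimension $d-1$ on every component through $\phi(z)$: via the center-of-mass map one has $\mathcal{C}\simeq\mathbb{A}^N\times\Hilb_d(\mathbb{A}^N,0)$, and for instance on the smoothable component $\dim\Hilb_d(\mathbb{A}^N)=Nd$ while $\dim\mathcal{C}=N+(d-1)(N-1)$, giving difference $d-1$. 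If $\mathcal{C}$ is locally cut out by $d-1$ equations on $\Hilb_d(\mathbb{A}^N)$, pulling them back through $\phi$ delivers the desired $f_i\in A$ and Krull's theorem finishes the argument. The hardest step is justifying this local cut-out, since $\Hilb_d(\mathbb{A}^N)$ is typically singular and reducible near $\phi(z)$; as a direct bypass, I would work instead with a universal generic element $b=\sum t_ib_i\in B\otimes_A A[t_1,\ldots,t_N]$ (where $b_i$ generate $B$ as an $A$-algebra) and extract $d-1$ equations on $A$ from the polynomial identity $\chi_b(T)=(T-c(t))^d$ in the auxiliary parameters $t_i$.
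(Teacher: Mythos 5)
Your reduction to local rings, the closedness argument via semicontinuity of the number of geometric points, and the monogenic case are all fine, and the overall strategy --- produce $d-1$ equations through $z$ and invoke Krull --- has the right shape. The gap is that neither of your two bypasses for the non-monogenic case actually produces $d-1$ equations. For the generic element $b=\sum t_ib_i$, the condition $\chi_b(T)=(T-c(t))^d$ is an identity of polynomials in the $N+1$ variables $t_1,\ldots,t_N,T$: each of the $d-1$ coefficients of $T^0,\ldots,T^{d-2}$ is itself a polynomial in $t$ whose vanishing expands into one scalar equation on $A$ per monomial $t^{\alpha}$. This collection does cut out $\mathcal{Z}$ set-theoretically, but Krull then only bounds the codimension by the total number of these equations, which is far larger than $d-1$, and you give no argument that some $d-1$ of them suffice. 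The Hilbert-scheme bypass has the same problem one level up: an expected codimension count does not make $\mathcal{C}$ a set-theoretic complete intersection, and $\Hilb_d(\mathbb{A}^N)$ is singular and reducible near $\phi(z)$, so there is no mechanism producing the $d-1$ local equations. Moreover your dimension count for $\mathcal{C}$ is circular here: the inequality $\dim\bigl(\Hilb_d(\mathbb{A}^N,0)\cap\Hilb^{\mathrm{sm}}_d(\mathbb{A}^N)\bigr)\geq (d-1)(N-1)$ is Proposition~\ref{ref:componentsLowerBound:prop}, which the paper \emph{deduces from} the proposition you are proving, and the analogous statement for arbitrary components of the punctual Hilbert scheme fails (Satriano--Staal). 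Note finally that you are aiming at something stronger than needed: for the lower bound on $\dim_z\mathcal{Z}$ it suffices to exhibit a closed subset of $\mathcal{Z}$ through $z$ cut out by $d-1$ equations, not to cut out $\mathcal{Z}$ itself.

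The paper supplies exactly this weaker object by a more geometric route. After finitely many finite surjective base changes one may assume $X=\bigcup_{i=1}^d s_i(S)$ is a union of $d$ sections; after normalizing $S$, freeness of $B$ over $A$ together with the fact that every $s_i(S)$ passes through the unique preimage of $z$ forces $X$ to be connected in codimension one. The non-\'etale locus is locally principal, so each codimension-one intersection $X_i\cap X_j$ maps to a divisor in $S$ through $z$; connectedness in codimension one makes the graph on the sections with these edges connected, and intersecting the $d-1$ divisors corresponding to a spanning tree yields a closed subset of codimension at most $d-1$ through $z$ on which all sections coincide, hence contained in $\mathcal{Z}$. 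Your approach would need an analogous inductive or combinatorial device to collapse the many equations coming from $\chi_b$ down to $d-1$; as written, that key step is missing.
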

\begin{proof}
    \def\pp{\mathfrak{p}}%
    Closedness of $\mathcal{Z}$ is well-known, for example~\cite[\href{https://stacks.math.columbia.edu/tag/0BUI}{Tag 0BUI}]{stacks_project}.
    The statement on the dimension is local around $z$, so we may replace $S$ by a component
    through $z$ which has dimension $\dim_z S$. Next, we reduce to the
    situation when $f\colon X\to S$ has $d$ sections $s_1, \ldots ,s_d$ such
    that $X = \bigcup s_i(S)$.
    Assume we have already $p$ sections $s_1, \ldots ,s_p$, where $p$ is
    possibly zero.
    Suppose $X \neq \bigcup s_i(S)$, take any irreducible component $S'$ of $X$ other than
    $s_1(S)$, \ldots ,$s_p(S)$ and view it with reduced
    scheme structure. Consider the
    base change
    \[
        \begin{tikzcd}
            X' := X\times_S S' \ar[r]\ar[d, "f'"] & X\ar[d,
            "f"]\\
            S' \ar[r] & S
        \end{tikzcd}
    \]
    The map $f'$ is finite flat degree $d$ by base change. Since $f$ is flat,
    it is torsion free, so every component of $X$ dominates $S$. Moreover, $f$
    is closed as a finite map. Thus $S'\to S$ is onto and so $f'$ is
    generically \'etale.
    This map has $p+1$ sections: $p$ sections which are pullbacks of $s_i$ and an
    additional section $i\times \id$ that comes from $i\colon S'\to X$. The
    intersections of every two of these sections with a fiber over a general
    point of $S'$ are disjoint, hence indeed we obtained $p+1$ disjoint
    sections (this in particular shows that one could not obtain $d+1$
    sections, so the procedure ends exactly for $p=d$).
    By replacing $S$ as above, we assume $f\colon X\to S$ is such that $X =
    \bigcup_{i=1}^d X_i$, where $f|_{X_i}\colon X_i\to S$ are isomorphisms.
    We finally replace $S$ by its normalization (which is a finite surjective
    map since $S$ was a finite type scheme) and shrink it so that $S =
    \Spec(A)$ is affine and $X = \Spec(B)$, where $B$ is a free $A$-algebra.

    We observe now that $X$ is connected in codimension one, which by
    definition means that $X\setminus V$ is connected for every closed $V$ with
    $\dim V \leq \dim S - 2$. Indeed, suppose not, then there exist nontrivial
    idempotents in $H^0(X\setminus V, \OO_{X})$. Since $A$ is normal, we have
    \[
        A = \bigcap\left\{ A_{\pp}\ |\ \pp\in \Spec(A), \dim(A/\pp) =
        \dim(A) - 1\right\}.
    \]
    Since $B$ is a free $A$-module, we have
    \[
        B = \bigcap\left\{ B_{\pp}\ |\ \pp\in \Spec(A), \dim(A/\pp) =
    \dim(A) - 1\right\},
    \]
    where $B_{\pp} = (A\setminus \pp)^{-1}B$.
    Therefore, the idempotents above uniquely extend to global sections
    $H^0(X, \OO_X)$ which are idempotents themselves; so $X$ is disconnected.
    But this is absurd since the unique preimage of $z$ lies in every
    irreducible component $X_i$. This concludes the proof that $X\setminus V$
    is connected.

    The locus where $f\colon X\to S$ is not \'etale is, locally near $z$,
    given by a single
    equation~\cite[\href{https://stacks.math.columbia.edu/tag/0BVH}{Tag~0BVH}]{stacks_project}. This locus coincides with the locus of $s\in
    S$ such that in $f^{-1}(s)$ two of the components $X_1$, \ldots , $X_d$
    coincide. Consider the graph with vertices $X_i$ and an edge joining $X_i$
    and $X_j$ iff $\dim(X_i\cap X_j) = d-1$. If this graph is disconnected,
    then $X$ is not connected in codimension one. Hence this graph is
    connected and we may choose a spanning tree $e_1 = (X_{i_1}, X_{j_1}),
    \ldots, e_{d-1} = (X_{i_{d-1}}, X_{j_{d-1}})$. The image $S_{m}$ of
    $X_{i_m} \cap
    X_{j_m}$ is a codimension one subscheme of $S$ that contains $z$, so
    the intersection $S_1 \cap S_2 \cap  \ldots \cap S_{d-1}$ is near $z$ a
    subscheme of codimension at most $d-1$, see for example~\cite[Chapter 10,
    10.2, 10.5, 10.6]{EisView}. For every point $z'$ of this
    intersection consider the fiber $F = f^{-1}(z')$. By assumption, the
    intersection $F\cap X_{i_m}\cap X_{j_m}$ is nonempty for every $m$. But
    $F\cap X_{i_m} = F\cap s_i(S)$ is (as a scheme!) a point, and similarly for $F\cap
    X_{j_m}$. This shows that $F\cap X_{i_m} = F\cap X_{j_m}$ for every $m$.
    Since we iterate over a spanning tree of a graph with vertices $X_1$,
    \ldots, $X_d$, we deduce that topologically $|F|\cap |X| = |F| \cap
    \bigcup_{i} |X_i| = \bigcup_{i} |F|\cap |X_i|$ is a point. Therefore,
    $z'\in \mathcal{Z}$ and so $S_1 \cap S_2 \cap  \ldots \cap S_{d-1}\subset
    \mathcal{Z}$, which concludes the proof.
\end{proof}

As a Corollary, we get the following result, whose latter part was proven
classically by different methods in~\cite[Theorem~3.5]{Gaffney}.
\begin{proposition}\label{ref:componentsLowerBound:prop}
    The dimension of
    $\HilbSmk(\mathbb{A}^n, 0)$ at every its point is at least $(n-1)(k-1)$.
\end{proposition}
\begin{proof}
    By Proposition~\ref{ref:purity:prop} applied to the universal family of
    $\HilbSmk(\mathbb{A}^n)$ we get that the locus $\mathcal{Z} \subset
    \HilbSmk(\mathbb{A}^n)$ parameterizing schemes supported at a single point
    has codimension at most $k-1$. This locus is fibered over
    $\mathbb{A}^n$ by sending the scheme to its support (formally, this
    map comes from restricting the Hilbert-Chow morphism). The locus
    $\HilbSmk(\mathbb{A}^n, 0)$ is a fiber of $\mathcal{Z}\to \mathbb{A}^n$,
    hence it has
    codimension at most $k-1+n$.
\end{proof}

Iarrobino~\cite[p.310]{iarrobino_10years} asked whether the lower bound
$(n-1)(k-1)$ from Proposition~\ref{ref:componentsLowerBound:prop} holds for
every component of $\Hilb_k(\mathbb{A}^n, 0)$. Recently, Satriano and
Staal~\cite{Satriano_Staal}
gave a negative answer already for $n=4$.

\section{Irreducibility of punctual Hilbert schemes on threefolds for $k\leq
11$}\label{Irreducibility}

In this subsection, we prove the irreducibility of $\Hilb_k(\mathbb{A}^3, 0)$ for
$k\leq11$.
Recall from \S\ref{sec:hilbertSchemePrelims} that the stratum
$\HilbFunc{H}{3}$ of an
ideal $I$ is the union of some connected components of
$\Hilb_k^+(\mathbb{A}^3, 0)$; the closed points of this union are ideals $I'$
with $H_{R/I'} = H_{R/I}$ and the tangent space to the stratum at a
$\Gmult$-fixed $[I]$ is given by
$\Hom_{R}(I, R/I)_{\geq 0}$.
As explained in the introduction, having
Proposition~\ref{ref:componentsLowerBound:prop}, the main difficulty is in
proving that the dimension of all but one strata is smaller than
expected. We do it
by bounding the dimension of their tangent spaces.
\begin{proposition}\label{ref:tangentSpacesToStrata:prop}
    Consider Borel-fixed ideals $I \subset \KK[x_1, x_2, x_3]$ such that
    $\dim_{\KK} \KK[x_1, x_2, x_3]/I \leq 11$. Then
    \[
        D(I) := \dim_{\KK} \left(T_{[I]} \Hilb_k(\mathbb{A}^3)\right)_{\geq 0} - 2(k-1)
    \]
    is negative for all of them with the exception of the following ideals:
    \begin{enumerate}[label=(\roman*)]
        \item  $I = (x_1, x_2, x_3^k)\mbox{ where } D(I) = 0$,
        \item\label{ex:1331} $I =
            (x_2^2,x_1x_2,x_1^2,x_2x_3^2,x_1x_3^2,x_3^4)\mbox{ where } D(I) =
            0$ and $H_{R/I} = (1,3,3,1)$,
        \item\label{ex:13311} $I = (x_2^2,x_1x_2,x_1^2,x_2x_3^2,x_1x_3^2,x_3^5)\mbox{ where } D(I) =
            0$ and $H_{R/I} = (1,3,3,1,1)$, 
        \item\label{ex:13321a} $I = (x_1x_3,x_1x_2,x_1^2,x_2^2x_3,x_2^3,x_2x_3^3,x_3^5)\mbox{ where }
            D(I) = 0$ and $H_{R/I} = (1,3,3,2,1)$,
        \item\label{ex:133111} $I = (x_2^2,x_1x_2,x_1^2,x_2x_3^2,x_1x_3^2,x_3^6) \mbox{ where } D(I) =
            0$ and $H_{R/I} = (1,3,3,1,1,1)$,
        \item\label{ex:13321b} $I = (x_2^2,x_1x_2,x_1^2,x_1x_3^2,x_2x_3^3,x_3^5) \mbox{ where } D(I)
            = 2$ and $H_{R/I} = (1,3,3,2,1)$,
        \item\label{ex:133211a} $I = (x_1x_3,x_1x_2,x_1^2,x_2^2x_3,x_2^3,x_2x_3^3,x_3^6) \mbox{ where }
            D(I) = 0$ and $H_{R/I} = (1,3,3,2,1,1)$,
        \item\label{ex:1331111} $I = (x_2^2,x_1x_2,x_1^2,x_2x_3^2,x_1x_3^2,x_3^7) \mbox{ where } D(I) =
            0$ and $H_{R/I} = (1,3,3,1,1,1,1)$,
        \item\label{ex:133211b} $I = (x_2^2,x_1x_2,x_1^2,x_1x_3^2,x_2x_3^3,x_3^6) \mbox{ where } D(I) =
            2$ and $H_{R/I} = (1,3,3,2,1,1)$,
        \item\label{ex:13331} $I = (x_2^2,x_1x_2,x_1^2,x_2x_3^3,x_1x_3^3,x_3^5) \mbox{ where } D(I) =
            0$ and $H_{R/I} = (1,3,3,3,1)$,
        \item\label{ex:13421} $I = (x_1x_2,x_1^2,x_1x_3^2,x_2^2x_3,x_2^3,x_2x_3^3,x_3^5)\mbox{ where
            } D(I) = 1$ and $H_{R/I} = (1,3,4,2,1)$.
    \end{enumerate}
\end{proposition}
\begin{proof}
    Direct \emph{Macaulay2} check using the package
    \emph{StronglyStableIdeals}~\cite{StronglyStableIdeals}. See the ancillary
    file for the arXiv version of this article.
\end{proof}

Before we deal with bounding the dimensions of respective strata, we need to
consider separately two ``sporadic'' cases, where $D(I)$ is too positive
to deal with it smoothly. For the first of the two, we consider not only
$\mathbb{A}^3$ but, more
generally, $\mathbb{A}^n$, as we will use also the case $n=4$ below, in
Proposition~\ref{ref:14321final:prop}.
\begin{proposition}\label{ref:1m321:prop}
Let $H=(1,n,3,2,1)$. The Hilbert scheme $\HilbFuncGr{H}{n}$ set-theoretically
consists of the following loci:
\begin{enumerate}[label=(\arabic*)]
    \item\label{it:1m321easy} ideals dually generated by $\ell_1^4, \ell_2^3,q$, where
        $\ell_1,\ell_2$ are two independent linear forms, and $q$ is a quadric
        independent from $\ell_1^2,\ell_2^2$. This is a locus parameterized by an open
        subset of $\Gr(1,n) \times \Gr(1,n) \times \Gr(1,\binom{n+1}{2} - 2)$,
        whence it has
        dimension $2n+\binom{n+1}{2}-5$.
    \item\label{it:1m321easy2} Ideals dually generated by $Q$ and $q$, where $Q$
        lies in $\Sym^4(V)$ for a two-dimensional linear subspace $V\subset
        S_1$ and is such that the space of second-order partial derivatives of $Q$
        is two-dimensional, while $q$
        is a
        quadric linearly independent from the second order partial derivatives of
        $Q$.
        The parameter space for $Q$ is an open subset of
        a divisor in a $\mathbb{P}(\Sym^4 \KK^2)$-bundle over $\Gr(2, n)$,
        while the parameter space for $q$ is an open subset of $\Gr(1,
        \binom{n+1}{2} - 2)$. Summing up, the locus
        is irreducible and has dimension
        $2n+\binom{n+1}{2}-4$.
    \item\label{it:1m321last} Ideals dually generated by all linear forms and
        a
        $Q$, where $Q$ is a general quartic in $\Sym^4(V)$ for a
        two-dimensional linear subspace $V\subset S_1$. This locus is
        parameterized by an open subset of a $\mathbb{P}(\Sym^4
        \KK^2)$-bundle over $\Gr(2,n)$ and hence has dimension $2n$.
    \item\label{it:1m321subtle} Ideals dually generated by a perfect quartic $\ell^4$ and a non-perfect cubic
        $c$ whose first order partial derivatives together with $\ell^2$
        span a $3$-dimensional subspace. This gives a locus of dimension
        at most $3n-2$.
\end{enumerate}
\end{proposition}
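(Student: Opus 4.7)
The plan is to translate via Macaulay's duality: graded ideals $I\subset R$ with $R/I$ of Hilbert function $H=(1,n,3,2,1)$ correspond bijectively to graded $R$-submodules $M := I^{\perp}\subset S$ with the same Hilbert series. Since $H(4)=1$, there is a quartic $F\in M_4$, unique up to scalar. Let $V\subseteq S_1$ be the smallest subspace with $F\in\Sym^4 V$, and set $v:=\dim V$. Because the partial derivatives by a basis of $V$ are linearly independent (otherwise a nonzero linear form would annihilate $F$, contradicting minimality of $V$), we have $\dim R_1\circ F=v$; since $R_1\circ F\subseteq M_3$ and $\dim M_3=2$, we deduce $v\in\{1,2\}$. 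The four cases of the statement then arise by further branching on the second catalecticant rank of $F$ when $v=2$, and on the structure of the forced extra cubic generator when $v=1$.

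When $v=2$, $R_1\circ F$ already fills $M_3$, so no cubic generator is needed. Let $r$ be the rank of the catalecticant $R_2\to \Sym^2 V$, $g\mapsto g\circ F$; then $r\in\{2,3\}$. If $r=3$, we have $R_2\circ F=\Sym^2 V=M_2$ and no quadric generator is needed; this is case \ref{it:1m321last}, with parameter space an open subset of the incidence $\{(V,[F]):V\in\Gr(2,n),\,[F]\in\PP\Sym^4 V\}$ of dimension $2(n-2)+4=2n$. If $r=2$, then $R_2\circ F$ is two-dimensional and an extra quadric $q$ independent from $R_2\circ F$ is required; this is case \ref{it:1m321easy2}. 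Here $F$ lies on the $3\times 3$ catalecticant divisor in $\PP\Sym^4 V$, and the claimed dimension follows from an incidence count over $\Gr(2,n)\times\PP\Sym^4 V\times\PP(S_2/R_2\circ F)$.

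When $v=1$, write $F=\ell^4$, so that $R_1\circ F=\KK\ell^3$ and $R_2\circ F=\KK\ell^2$. Then $\dim M_3=2$ forces an extra cubic generator $c\notin\KK\ell^3$, and $\dim M_2=3$ forces $\dim(\langle\ell^2\rangle+R_1\circ c)$ plus the number of extra quadric generators to equal $3$. If $c=\ell_2^3$ is a perfect cube then $R_1\circ c=\KK\ell_2^2$ and one extra quadric $q$ independent from $\langle\ell^2,\ell_2^2\rangle$ is needed; this is case \ref{it:1m321easy}, parameterized by an open subset of $\Gr(1,n)\times\Gr(1,n)\times\Gr(1,\binom{n+1}{2}-2)$. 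If $c$ is non-perfect then $\dim R_1\circ c\geq 2$, and the condition $\dim(\langle\ell^2\rangle+R_1\circ c)=3$ (avoiding an extra quadric generator) gives case \ref{it:1m321subtle}; the dominant sub-stratum has $c$ a binary cubic in two essential variables with $\ell^2\notin R_1\circ c$, giving dimension $(n-1)+2(n-2)+3=3n-2$. In every case, the linear-form generators needed to fill $M_1=S_1$ are uniquely determined by the remaining data and contribute no moduli.

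The main obstacle is case \ref{it:1m321subtle}: we must verify that the listed description exhausts all non-perfect $c$ and, in particular, control the sub-stratum where $c$ has three essential variables with $\ell^2\in R_1\circ c$. The plan here is to analyze the $3\times 6$ catalecticant matrix of $c$: for fixed $\ell$, requiring $\ell^2$ to lie in its row span is a rank condition that cuts out a locus of appropriately bounded codimension inside $\PP\Sym^3 V'$, and combining this with the parameter count for the essential-variable space $V'\ni \ell$ confirms the stated bound. The rest of the verification is a routine incidence-variety dimension computation in each stratum.
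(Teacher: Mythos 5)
Your overall route is the same as the paper's: pass to inverse systems via Macaulay duality, stratify by the structure of the (unique up to scalar) quartic $F\in M_4$, and count parameters stratum by stratum. Your case division by $v=\dim V\in\{1,2\}$ and then by the middle catalecticant rank is actually more systematic than the paper's, which declares loci \ref{it:1m321easy}--\ref{it:1m321last} and their dimensions ``obvious'' and spends the entire proof on locus \ref{it:1m321subtle}; your counts for \ref{it:1m321easy}--\ref{it:1m321last} are correct.

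The gap is precisely in the sub-case you yourself flag as ``the main obstacle'': $F=\ell^4$ with $c$ essentially in three variables $W$, so that necessarily $\ell\in W$ and $\ell^2\in R_1\circ c$. Your plan---impose the rank condition $\ell^2\in R_1\circ c$ on $[c]\in\mathbb{P}(\Sym^3 W)$ and add up parameters---yields $3n-1$, not $3n-2$. Indeed, the incidence $\{(c,[\alpha]): \alpha\circ c\in\KK\ell^2\}$ has dimension $2+5=7$ inside the $10$-dimensional $\Sym^3W$ and projects generically injectively, so your rank locus has codimension $3$ in $\mathbb{P}^9$, i.e.\ dimension $6$; adding $\gr(3,n)=3(n-3)$ for $W$ and $2$ for $\ell\in\Gr(1,W)$ gives $3n-1$. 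The missing ingredient, which the paper's proof states explicitly, is that this parameterization is not injective on inverse systems: since $\ell^3$ is already a partial of $\ell^4$, replacing $c$ by $c+t\ell^3$ does not change the module $M$, so the fibers of the map to ideals are at least $1$-dimensional (after projectivizing $c$), and it is this quotient that brings the count down to $3n-2$. The off-by-one happens not to hurt the downstream applications in Propositions~\ref{ref:13421:prop}, \ref{ref:smallExceptionalCells:prop} and \ref{ref:14321final:prop}, but as written your sketch does not prove the stated bound. A smaller, shared issue: both your branching and the proposition itself omit the boundary stratum where $c$ is non-perfect but $\dim(\langle\ell^2\rangle+R_1\circ c)=2$ (e.g.\ $c=\ell^2\ell'$), which forces an extra quadric generator; this stratum has dimension $2n+\binom{n+1}{2}-6$, below loci \ref{it:1m321easy} and \ref{it:1m321easy2}, so it is harmless for the dimension bounds, but a genuinely set-theoretic decomposition should list it.
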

\noindent In the case~\ref{it:1m321subtle} we do not have a parameterization,
        but the estimate above is enough for our purposes.
\begin{proof}
 Let $I$ be the homogeneous ideal of an algebra with the given Hilbert
 function.  Since $I^\perp$ is generated (non-minimally) by a quartic, two
 cubics and three quadrics as an $R$-module, the
 possibilities~\ref{it:1m321easy}-\ref{it:1m321last} and the corresponding
 dimension counts are obvious. We only prove the bound on the dimension of the
 last locus.
 Let $\ell^4$ be a perfect quartic and let $c$ be a cubic
 whose partial derivatives together with $\ell^2$ generate a $3$-dimensional
 vector space.
 If $c$ does not depend essentially on three variables, then the
 upper bound $n-1 + \dim \Gr(2, n) + 3 = 3n-2$ follows immediately, so we suppose
 it does. Let $c\in \KK[W]$, for $W\subset S_1$
 three-dimensional.
 The choice of $c$ is determined by a choice of a linear form $\alpha\in
 W^{\vee}$
 such that $\alpha \circ c = \ell^2$ and a choice of an element in $\Sym^3V$,
 where $V$ is the two-dimensional space of linear forms in $W$ annihilated by $\alpha$.
 At most, we obtain a $\dim \Gr(3, n) + \dim \Gr(1, W) + \dim W^{\vee} + \dim
 \KK[V]_3 = 3n$ dimensional choice. This parameterization is not one-to-one,
 as adding $\ell^3$ to $c$ and/or multiplying it by a nonzero scalar does not
 change the system. Therefore, the final estimate is $3n-2$, as claimed.
 \end{proof}
\begin{remark}\label{ref:1m3211:rem}
By the very same argument as in Proposition~\ref{ref:1m321:prop}, for
$H=(1,n,3,2,1,1, \ldots, 1)$, with at least two trailing ``1'', the Hilbert scheme $\HilbFuncGr{H}{n}$ consists of two loci, parameterizing ideals analogous to the ideals
in~\ref{it:1m321easy} and~\ref{it:1m321subtle},
and with the same dimension estimates, respectively.
\end{remark}

\begin{proposition}\label{ref:13421:prop}
Let $H=(1,3,4,2,1)$. The Hilbert scheme $\HilbFuncGr{H}{3}$ set-theoretically consists of the following loci:
\begin{enumerate}[label=(\arabic*)]
    \item ideals dually generated by $\ell_1^4,\ell_2^3$ for two independent
        $\ell_1,\ell_2\in S_1$, and two quadrics $q_1,q_2$ independent from
        $\ell_1^2,\ell_2^2$. This is a locus parametrized by an open subset of
        $\Gr(1,3)\times \Gr(1,3)\times \Gr(2,4)$ whence it has dimension $8$.
    \item Ideals dually generated by a perfect quartic $\ell^4$ and a general
        cubic. The cubic can be taken modulo the $\ell^3$ term, so the locus is
        parameterized by an open subset of $\Gr(1,3)\times \Gr(1,9)$, so it has dimension $10$.
    \item Ideals dually generated by $Q$ and $q_1$, $q_2$, where $Q$
        lies in $\Sym^4(V)$ for a two-dimensional linear subspace $V\subset
        S_1$ and is such that the space of second-order partial derivatives of $Q$
        is two-dimensional, while $q_1$, $q_2$
        are
        quadrics linearly independent from the second order partial derivatives of
        $Q$.
        This locus is an open subset of a divisor in a $\mathbb{P}(\Sym^4
        \KK^2)$-bundle over $\Gr(2, 3) \times
         \Gr(2, 4)$, whence it has dimension
        $(2 + 4 + 4) -1 = 9$.
    \item Ideals dually generated by $Q$ and $q$, where $Q$
        is an element in $\Sym^4(V)$ for a two-dimensional linear subspace $V\subset
        S_1$ and is such that the space of second-order partial derivatives of $Q$
        is three-dimensional, while $q$ is
        a quadric linearly independent from the second order partial derivatives of
        $Q$. This locus is parametrized by an open subset of a
        $\mathbb{P}(\Sym^4 \KK^2)$-bundle over $\Gr(2,3)\times \Gr(1, 3)$ and hence of dimension $8$.
\item \label{it:13421subt} Ideals dually generated by $\ell^4$, $c$, $q$,
    where $\ell$ is a linear form, $c$ is a cubic in $S$ with first-order
    partial derivatives are linearly dependent with $\ell^2$, and a quadric $q$
independent from partial derivatives of $c$ and $\ell^2$. This gives a
locus of dimension at most $9$.
\end{enumerate}
\end{proposition}
\begin{proof}
We only note that the bound $9$ in \ref{it:13421subt}
follows from the bound $7$ in Proposition
\ref{ref:1m321:prop}\ref{it:1m321subtle} increased by $2 = \dim \Gr(1, 3)$ to account for the
choice of a quadric $q$ modulo the partials of $c$ and $\ell^4$.
\end{proof}

\begin{proposition}\label{ref:smallExceptionalStrata:prop}
    The stratum of $(x_1, x_2, x_3^k)$ is $2(k-1)$-dimensional, while
    the strata for all other exceptional ideals from
    Proposition~\ref{ref:tangentSpacesToStrata:prop} have dimension strictly less
    than $2(\deg(I) - 1)$. Moreover, the point $(x_1, x_2, x_3^k)$ is a smooth
    point of its stratum.
\end{proposition}
\begin{proof}
    Consider first the curvilinear ideal $I = (x_1, x_2,
    x_3^k)$. We directly see that the dimension
    of the tangent space $\Hom_R(I, R/I)_{\geq 0}$ to its stratum is $2(k-1)$. Moreover, for every $f_1, f_2\in
    (tk[t])_{\leq k-1}$ the ideal $(x_1 - f_1(x_3), x_2 - f_2(x_3), x_3^k)$
    lies in the stratum of $I$. Hence this stratum has dimension at least
    $2(k-1)$ near $[I]$. Together with the tangent space dimension, this implies that
    $[I]$ is a smooth point of its $2(k-1)$-dimensional stratum.

    Consider now the remaining ideals $I$. First, pick any of them with
    $D(I) = 0$. Suppose that its stratum is $2(k-1)$-dimensional. Then $D(I) = 0$
    implies that $[I]$ is a smooth point of its stratum.
    Suppose $I'$ is a homogeneous ideal and suppose there is a one-dimensional
    torus $T$ action on
    $\AA^3$ such that in the induced action on $\Hilb_k(\mathbb{A}^3)$
    the point $[I']$ converges to $[I]$. The
    curve $[I_t]$ degenerating $[I']$ to $[I]$ lies in the $\Gmult$-fixed locus and
    in particular in the stratum of $[I]$. Since $[I]$ is a smooth point
    of the stratum, the smooth locus of the stratum intersects the curve
    $[I_t]$ in an open, $\Gmult$-stable subset that
    contains the unique $\Gmult$-fixed point of the curve; thus the smooth locus contains
    the whole curve $[I_t]$. In particular, the point $[I']$ is smooth in this
    stratum,
    hence
    \[
        \left(T_{[I']} \Hilb_k(\mathbb{A}^3)\right)_{\geq 0} = 2(k-1).
    \]
    By semicontinuity of each graded piece of the tangent space, we even
    obtain
    \[
        \left(T_{[I]} \Hilb_k(\mathbb{A}^3)\right)_{i} =\left(T_{[I']}
        \Hilb_k(\mathbb{A}^3)\right)_{i} \quad \mbox{for all}\quad i\geq 0.
    \]
    It is enough to exhibit an ideal $I'$ violating this last condition. Below
    we give a list of those, in each case pointing out the differing
    degree.
    In all cases, we fix a $T$-action $t\cdot (y_1, y_2, y_3) =
    (t^{a_1}y_1, t^{a_2}y_2, t^{a_3}y_3)$ with $a_1 \gg a_2 \gg
    a_3$ and consider the limit at $t\to 0$, so the initial form of a
    homogeneous polynomial is its
    lex-largest monomial in $S$.
    \begin{itemize}
        \item[\ref{ex:1331}, \ref{ex:13311}, \ref{ex:133111},
            \ref{ex:1331111}] consider the apolar ideal of $y_1^2+y_1y_3, y_2y_3,
            y_3^{3+i}$, degree $i$ for $i=0,1,2,3$, respectively.
        \item[\ref{ex:13321a}, \ref{ex:133211a}] consider the apolar ideal of $y_1^2+y_2^2,
            y_2y_3^2, y_3^{4+i}$, degree $i$ for $i=0,1$, respectively.
        \item[\ref{ex:13331}] consider the apolar ideal of $y_1y_3^2 +
            y_2^2y_3, y_2y_3^2, y_3^4$, degree $1$.
    \end{itemize}
        In the case~\ref{ex:13321b}, by Proposition~\ref{ref:1m321:prop} we see
        that the $\Gmult$-fixed locus near this point has dimension at most $8$,
        while the degree zero part of the tangent space at $[I]$ has dimension
        $10$. By Proposition~\ref{ref:tangentSpacesToStrata:prop} the strictly
        positive part of this tangent space also has dimension $10$; in
        particular, the fiber has dimension at most $10$. By the
        base-and-tangent-to-fiber estimate, the stratum
        of $[I]$ has dimension at most $8+10 = 2\cdot (10-1)$. Suppose
        that the stratum has dimension exactly $2\cdot (10-1)$.
        By the estimation above, the $\Gmult$-fixed locus near $[I]$ has
        dimension exactly $8$. By the bounds from
        Proposition~\ref{ref:1m321:prop} this means that $[I]\in
        \Hilb^{\Gmult}_{10}(\mathbb{A}^3, 0)$ lies in the closure
        $\mathcal{V}$ of the irreducible locus
        from Proposition~\ref{ref:1m321:prop}\ref{it:1m321easy2} and that there
        exists an open neighbourhood of $[I]\in \mathcal{V}$ such that the
        the fiber of $\pi$ is smooth $10$-dimensional over any point of this
        neighborhood.
        The
        \emph{strictly positive} degree parts of the tangent spaces are
        constant for this open neighbourhood.
        Consider the ideal $I'$ apolar to $y_1^2 + y_1y_3, y_2y_3^3 + y_3^4$.
        It lies in $\mathcal{V}$ and its lex-initial ideal is $I$, so the
        $T$-orbit of $[I']$ is a curve in $\mathcal{V}$ which intersects the
        above neighbourhood nontrivially.
        Yet, the
        tangent spaces to the stratum at $[I']$ and $[I]$ have dimensions differing in degree $1$, which gives a
        contradiction. The very same strategy works in the
        case~\ref{ex:133211b} with Remark~\ref{ref:1m3211:rem}, the ideal $I'$ apolar to $y_1^2 + y_1y_3, y_2y_3^2,
        y_3^5$ and degree $2$.

        In the case~\ref{ex:13421} the degree zero part of the tangent space
        is $12$-dimensional, while the
        possible homogeneous inverse
        systems are at most $10$-dimensional by
        Proposition~\ref{ref:13421:prop}, so by the base-and-tangent-to-fiber
        estimate the stratum has dimension at
        most $19$. This concludes the last case and the whole proof.
    \end{proof}

\begin{proof}[Proof of Theorem~\ref{ref:irreducibility}]
   By~\cite{DJNT} we have $\Hilb_k(\mathbb{A}^3) = \HilbSmk(\mathbb{A}^3)$. 
    By
    Proposition~\ref{ref:componentsLowerBound:prop} each of the
    irreducible components of $\HilbSmk(\mathbb{A}^3, 0)$ has dimension at least $2(k-1)$.
    Take any such component $\mathcal{Z}$. There is a stratum $\HilbFunc{H}{3}$ which
    contains an open subset of this component, so the dimension
    of $\mathcal{Z}\cap \HilbFunc{H}{3}$ is at least $2(k-1)$. Consider a
    $\Gmult$-fixed point $[I]$ of this intersection.
    The ideal $I$ is
    graded and
    the nonnegative part of its tangent space to the Hilbert scheme has dimension at
    least $2(k-1)$.
    The generic initial ideal $I_0$ of $I$ is
    Borel-fixed~\cite[\S15.9]{EisView}. By semicontinuity, both
    its stratum and the
    nonnegative part of its tangent space have dimension at least $2(k-1)$.
    But by
    Propositions~\ref{ref:tangentSpacesToStrata:prop},~\ref{ref:smallExceptionalStrata:prop}
    there is a single such point, corresponding to $I_0 = (x_1, x_2,
    x_3^k)$. It follows that the generic initial
    ideal of $I$ is in the curvilinear locus, hence $I$ itself is in this
    locus. Thus the stratum of $I$ coincides with the stratum of $I_0$ which is
    equal to the curvilinear locus, so the chosen component lies in this locus.
    This concludes the proof.
\end{proof}

\section{Construction of regular maps to Grassmannian}\label{sec:regularMaps}

In this section, we derive some existence statements for regular maps.
They generalize the upper bounds from~\cite{bjjm} and are proven using a
similar method, although the key Proposition~\ref{ref:main:prop}, which
generalizes~\cite[Lemma~2.3]{bubu2010} (that actually refers
to~\cite{bgl2010}), has to be proven in a bit subtler
way.

\subsection{Reduction to small socle}\label{sec:reduce}

\newcommand{\spannpar}[2]{\spann{#1}_{#2}}%
    Let $V$ be a $\KK$-vector space and $\Gr(\tau{}, V)$ be the Grassmannian of its
    $\tau{}$-dimensional subspaces. Let $\mathcal{U}$ be the universal subbundle, which is a rank $\tau{}$ vector bundle over $\Gr(\tau{},
    V)$. For a zero-dimensional scheme $Z$ with a morphism $f\colon Z \to \Gr(\tau{}, V)$, let
    $\mathcal{U}_Z$ be the pullback of
    this bundle to $Z$ so that we have
    \[
        \begin{tikzcd}
            \mathcal{U}_Z\arrow[r, hook, "closed"] & Z \times V\arrow[r, "\pr_2"] & V
        \end{tikzcd}
    \]
    and let $\spannpar{Z}{f}$ be the linear space spanned in $V$
    by $\pr_2(\mathcal{U}_Z)$, where $\pr_2$ is the second projection.
    We stress that $\spannpar{Z}{f}$ depends not only on $Z$ but also on the
    morphism $f\colon Z\to \Gr(\tau{}, V)$.

    \begin{example}\label{ex:reducedScheme}
        The motivation for the symbol $\spannpar{Z}{f}$ is the case when $Z$ is a tuple
        of $k$ points. Then the map $f\colon Z\to \Gr(\tau, V)$ corresponds to choosing
        $k$ subspaces $W_1, \ldots ,W_k \subset V$ and so $\spannpar{Z}{f} = W_1 +
         \ldots + W_k$. In particular, for $\tau = 1$ and for $Z \into \Gr(1,
         V) = \mathbb{P}(V)$ we get the usual notion of projective span of
         $Z$.
    \end{example}

    For a zero-dimensional irreducible scheme $Z$, let $\tau(Z) := \tau(H^0(Z, \OO_Z)) =
    \dim_{\KK} \Soc(H^0(Z, \OO_Z))$. For example, $\tau(Z) = 1$ if and only if $Z$ is
    Gorenstein.
    The following is the key result that will allow us to reduce from
    $\Hilb_k(\mathbb{A}^n, 0)$ to its sublocus $\Hilb_k^{\tau}(\mathbb{A}^n,
    0)$. It is a generalization
    of~\cite[Lemma~3.5 (iii)$\implies$(i)]{bgl2010}, which dealt with the case
    $\tau=1$.
    \begin{proposition}\label{ref:main:prop}
        Let $Z$ be a zero-dimensional irreducible scheme with $\tau(Z) > \tau{}$
        together with a morphism $f\colon Z\longrightarrow \Gr(\tau, V)$. Then
        $\spannpar{Z}{f} =
        \bigcup_{Z' \subsetneq Z} \spannpar{Z'}{f|_{Z'}}$. Consequently, we
        have
        \[
            \spannpar{Z}{f} = \bigcup\left\{ \spannpar{Z'}{f|_{Z'}} \ |\ Z'
        \subset Z, \tau(Z')\leq\tau \right\}.
        \]
    \end{proposition}

    For the proof, we need the following auxiliary lemma.
    \begin{lemma}\label{ref:functionals:lem}
        Let $Z$ be a zero-dimensional irreducible scheme with $\tau(Z) > \tau{}$.
        Let $\EE$ be a rank $\tau{}$ locally free sheaf on $Z$ and
        let $E = H^0(Z, \EE)$. Finally, let $\varphi\colon E\to \KK$ be a $\KK$-linear map.
        Then there exists a proper subscheme $Z' \subsetneq Z$ and a
        $\KK$-linear map $H^0(Z', \EE|_{Z'}) \to \KK$ such that $\varphi$
        factors as
        \[
            H^0(Z, \EE)\onto H^0(Z', \EE|_{Z'}) \to \KK,
        \]
        where the first map comes from restricting $\EE$ to $Z'$.
    \end{lemma}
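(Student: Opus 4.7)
Write $A := H^0(Z, \OO_Z)$; this is a local Artin $\KK$-algebra whose socle dimension is $\tau(A) = c(Z)+1 \geq \tau+1$. Since $\EE$ is locally free of rank $\tau$ on the local scheme $Z$, the module $E = H^0(Z, \EE)$ is a free $A$-module of rank $\tau$. A proper subscheme $Z' \subsetneq Z$ corresponds to a nonzero ideal $J \subset A$, and under this identification the restriction $E \to H^0(Z', \EE|_{Z'})$ becomes the canonical surjection $E \onto E/JE$. Hence the task is to produce a nonzero ideal $J \subset A$ with $\varphi(JE) = 0$.

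Fix a free $A$-basis $e_1, \ldots, e_\tau$ of $E$ and define $\varphi_i \in A^* := \Hom_\KK(A, \KK)$ by $\varphi_i(a) := \varphi(a e_i)$. Endow $A^*$ with its natural $A$-module structure $(a\cdot\psi)(b) := \psi(ab)$. Then the required condition $\varphi(JE) = 0$ is equivalent to $J \cdot \varphi_i = 0$ for every $i$. Setting $M := A\varphi_1 + \ldots + A\varphi_\tau \subset A^*$, I therefore need to prove that $\Ann_A(M) \neq 0$.

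The module $M$ is generated, as an $A$-module, by at most $\tau$ elements. The plan is to invoke Matlis duality for the Artin local ring $A$: the module $A^*$ is the injective envelope of the residue field, and the short exact sequence $0 \to M \to A^* \to A^*/M \to 0$ dualizes to $0 \to D(A^*/M) \to A \to D(M) \to 0$, identifying $D(M)$ with $A/\Ann_A(M)$. Under this duality the minimal number of generators of $M$ equals the socle dimension of its Matlis dual, so $\tau(A/\Ann_A(M)) \leq \tau < \tau+1 \leq \tau(A)$. Therefore $\Ann_A(M)$ cannot be the zero ideal, and any nonzero $s \in \Ann_A(M)$ yields a proper subscheme $Z' := \Spec(A/(s)) \subsetneq Z$ through which $\varphi$ factors, as required.

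The only subtle point is the Matlis-duality bookkeeping—identifying the dual of a submodule of the injective hull with the quotient of $A$ by the submodule's annihilator, and translating ``number of generators on one side'' into ``socle dimension on the other.'' If preferred, one can rephrase the argument entirely in the language of \S\ref{sec:irreducibilitypreliminaries}: present $A = R/I$ with $R = \KK[[x_1, \ldots, x_n]]$, view each $\varphi_i$ as an element of the Macaulay inverse system $I^\perp \subset S$, and apply apolarity. The $R$-submodule $M \subset S$ generated by $\varphi_1, \ldots, \varphi_\tau$ satisfies $M = (\Ann_R M)^\perp$, and Macaulay's duality gives that $R/\Ann_R M$ has socle dimension equal to the minimal number of generators of $M$, which is at most $\tau$. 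Since $\tau(R/I) \geq \tau+1$, this forces $\Ann_R(M) \supsetneq I$, producing the desired nonzero element $s \in A$.
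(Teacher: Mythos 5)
Your proof is correct. The paper's own argument is more elementary: after reducing to $\EE = \OO_Z^{\oplus\tau}$ it observes that $\bigcap_{k=1}^{\tau}\ker(\varphi\circ i_k)$ is a subspace of $A = H^0(Z,\OO_Z)$ of corank at most $\tau$, hence meets the socle (of dimension at least $\tau+1$) in a nonzero element $r$, and takes $Z' = V(r)$; since $r$ is a socle element the ideal $(r)$ is just the line $\KK r$, so $\varphi((r)E)=0$ and $\varphi$ factors through $E/(r)E$. Your route packages the same numerical comparison ($\tau$ functionals against a socle of dimension $\tau+1$) through Matlis/Macaulay duality: you form the submodule $M = A\varphi_1+\ldots+A\varphi_\tau$ of $A^{*}$ (equivalently of $I^{\perp}$), use that its minimal number of generators equals the socle dimension of $A/\Ann_A(M)$, and conclude $\Ann_A(M)\neq 0$. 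All the identifications you flag do hold: $H^0(Z',\EE|_{Z'}) = E/JE$ for $Z'=\Spec(A/J)$ because $E$ is free, $A^{*}$ is the injective hull of the residue field for an Artinian local $\KK$-algebra with residue field $\KK$, the Matlis dual of $M$ is $A/\Ann_A(M)$, and the generators-versus-socle correspondence is exactly the statement ``$I^{\perp}$ is minimally generated by $\tau(R/I)$ elements'' recorded in the paper's version of Macaulay's duality. What your version buys is the full annihilator ideal $\Ann_A(M)$ rather than a single socle element, so $Z'$ could be taken as small as $V(\Ann_A(M))$; what it costs is the duality machinery, which the paper's short dimension count in $A$ avoids entirely.
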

    \begin{proof}
        The sheaf $\EE$ is actually free, so we assume $\EE = \OO_Z^{\oplus
        \tau{}}$ and consequently $E = H^0(Z, \OO_Z)^{\oplus \tau{}}$. Let $i_p\colon
        H^0(Z, \OO_Z)\to E$ be the injection of the $p$-th factor. The space
        \[
            \bigcap_{p=1}^\tau{} \ker(\varphi \circ i_p) \subseteq H^0(Z,
            \OO_Z)
        \]
        is a corank $\leq \tau{}$ subspace. By assumption,
        $\dim_{\KK}\Soc(H^0(Z, \OO_Z)) > \tau{}$, so this subspace intersects
        the socle nontrivially. Let $r$ be a non-zero
        element of this intersection. The subscheme $Z' = V(r)$ is the
        required one.
    \end{proof}

    \begin{proof}[Proof of Proposition~\ref{ref:main:prop}]
        \def\II{\mathcal{I}}%
        Let $0\to \mathcal{U}\to \OO_{\Gr(\tau{}, V)}\otimes V \to Q\to 0$ be the universal
        sequence of locally free sheaves on $\Gr(\tau{}, V)$, where $\mathcal{U}$ has rank
        $\tau{}$. Dualizing and taking symmetric powers, we obtain the sequence
        \[
            0\to \mathcal{K}\to \OO_{\Gr(\tau{}, V)} \otimes \Sym(V^*)\to
            \Sym(\mathcal{U}^*)\to 0
        \]
        where $\mathcal{K} = (Q^*)$ is the ideal sheaf generated by degree one
        elements and with $\mathcal{K}_1 = Q^*$; it is exactly the ideal sheaf
        of $\mathcal{U} \into \Gr(\tau{},
        V)\times V$. Restricting this sequence to $Z$ via $f$ we obtain the sequence
        \[
            0\to \mathcal{K}_Z\to \OO_{Z} \otimes \Sym(V^*)\to
            \Sym(\EE)\to 0
        \]
        where $\EE = \mathcal{U}^*|_{Z}$ and so $\mathcal{K}_Z$ is the ideal sheaf of
        $\mathcal{U}_Z \into Z \times V$.
        The ideal of $\pr_2(\mathcal{U}_{Z})$ is $\mathcal{K}_Z \cap \Sym(V^*)$, where
        $\Sym(V^*)\into \OO_Z \otimes_{\KK} \Sym(V^*)$ is the usual inclusion.
        Note that this ideal is homogeneous.
        Finally, the linear span of $\pr_2(\mathcal{U}_{Z})$ is given by taking only the
        linear part of this ideal. So we discard the nonlinear parts of the
        above sequence and take global sections to obtain
        \[
            \begin{tikzcd}
                && V^* \arrow[d]\arrow[rd, "\varphi"]\\
                0\ar[r] & H^0(\mathcal{K}_Z)_1 \ar[r] & H^0(Z, \OO_Z) \otimes_{\KK} V^*\arrow[r] & H^0(Z,
                \EE)\ar[r] & 0
            \end{tikzcd}
        \]
        A $\KK$-point $x\in V$ lies in $\spann{\pr_2(\mathcal{U}_{Z})}$ if and only if
        $\mm_{x}$ contains $\ker \varphi$. For such a point we complete the
        above diagram to
        \[
            \begin{tikzcd}
                && V^* \arrow[d]\arrow[r, "ev_{x}"] & \KK\\
                0\ar[r] & H^0(\mathcal{K}_Z)_1 \ar[r] & H^0(Z, \OO_Z) \otimes_{\KK} V^*\arrow[r] & H^0(Z, \EE)\arrow[u,
                "\varphi_x"]\ar[r] & 0
            \end{tikzcd}
        \]
        where $\varphi_x$ is a map making the diagram commutative.
        By Lemma~\ref{ref:functionals:lem}, there is a proper closed subscheme $Z'
        \subset Z$ such that the relevant part of the above diagram completes to a commutative
        diagram
        \[
            \begin{tikzcd}
                V^* \arrow[d]\arrow[r, "ev_{x}"] & \KK\\
                H^0(Z', \OO_{Z'}) \otimes_{\KK} V^*\arrow[r, two heads] &
                H^0(Z', \EE|_{Z'})\arrow[u,
                "linear"]\\
                H^0(Z, \OO_Z) \otimes_{\KK} V^*\arrow[r, two heads]\arrow[u,
                two heads] & H^0(Z,
                \EE)\arrow[uu,
                "\varphi_x"', bend right=80]\arrow[u, two heads]
            \end{tikzcd}
        \]
        Reversing the above argument, we get that $x$ lies in the linear span
        of $\pr_2(U_{Z'})$ as claimed.
    \end{proof}

    \begin{example}
        \def\FF{\mathcal{F}}%
        Let $Z = \Spec(\KK[x_1,x_2,x_3]/(x_1, x_2, x_3)^2)$, so that $\tau(Z) =
        3$. Let $V$ be a $\KK$-vector space.
        A morphism $f\colon Z\to \Gr(2, V)$ corresponds to a rank two
        locally free subsheaf $\FF\subseteq \OO_Z \otimes_{\KK} V$ such that
        $(\OO_Z\otimes_{\KK} V)/\FF$ is locally free. Since $Z$ is
        zero-dimensional, the sheaf $\FF$ is free and determined by its space
        of global
        sections $H^0(\FF)$ which is spanned as an $H^0(\OO_Z)$-module by two
        elements
        \[
            e_{11} + e_{12}x_1 + e_{13} x_2 + e_{14} x_3,\ e_{21} + e_{22}x_1 + e_{23}x_2 +
            e_{24}x_3 \in \KK[x_1,x_2,x_3]/(x_1, x_2,
            x_3)^2\otimes_{\KK} V,
        \]
        where $e_{11}, \ldots ,e_{14}, e_{21}, \ldots ,e_{24}\in V$.
        By definition of $f$, we have $\mathcal{U}_Z = \FF$ as subsheaves of
        $\OO_Z \otimes V$,
        so that $\EE$ from the proof is equal to $\FF^*$ and the map
        $\varphi\colon V^*\to H^0(\EE)  \simeq  H^0(\FF)^*$ is given by
        \[
            \varphi(v^*) = (v^*(e_{11}), \ldots ,v^*(e_{14}), v^*(e_{21}), \ldots ,v^*(e_{24})).
        \]
        It follows that $\spannpar{Z}{f} = \spann{e_{11}, \ldots ,
        ,e_{24}}$. Consider a point $p = \sum_{1\leq i\leq 2,1\leq j\leq 4}
        \alpha_{ij} e_{ij}$
        in this span, where $\alpha_{ij}\in \KK$.
        Let $(\lambda_1, \lambda_2, \lambda_3)\in \KK^3$ be a nonzero vector
        perpendicular to $(\alpha_{12}, \alpha_{13}, \alpha_{14})$ and
        $(\alpha_{22}, \alpha_{23}, \alpha_{24})$. Reversing the above argument we get that $p$ lies in $\spannpar{Z'}{f|_{Z'}}$ where
        $Z' = V(\lambda_1 x_1 + \lambda_2 x_2 + \lambda_3
        x_3)\subset Z$.
    \end{example}

\subsection{Existence of regular maps and its connection with the Hilbert scheme}
In this section $\KK = \mathbb{C}$ and we consider $\mathbb{C}^n$ mostly as a
topological space with Euclidean topology. We also consider maps $X\to
\Gr(\tau, V)$ for more general $X$. In fact, we will mostly use the case $X =
\mathbb{P}^n$, see Corollary~\ref{ref:existence:cor}, but it seems more
natural to give the general definitions. Correspondingly, we will
consider $\Hilb_k(\mathbb{P}^n, p)$ where $p$ is a fixed closed point, and similar loci instead of
$\Hilb_k(\mathbb{A}^n, 0)$.
 \begin{definition}\label{def:algebraicKregular}
     Let $X$ be a topological space.
        A map $f\colon X\to \Gr(\tau, V)$ is
        \emph{$k$-regular} if
        for every tuple of $k$ distinct points $x_1, \ldots ,x_k\in
        X$ the linear space $\spann{f(x_1), \ldots ,f(x_k)}$ is
        $\tau k$-dimensional. Let $X$ be an algebraic variety. A morphism $f\colon X\to \Gr(\tau,
        V)$ is \emph{strongly $k$-regular} if for every zero-dimensional smoothable
        subscheme $Z\subset
        X$ of degree $k$ the linear space $\spannpar{Z}{f|_{Z}}$ is $\tau
        k$-dimensional.
    \end{definition}
    In light of Example~\ref{ex:reducedScheme}, strongly regular maps are
    regular. Moreover, the existence of a regular map for a certain $N$ implies the existence of this map for any integer $\geq N$.
    The definition of a $k$-regular map makes sense for every map of sets
    $f\colon X(\mathbb{C})\to \Gr(\tau, V)(\mathbb{C})$.

    Let $f\colon X\to \mathbb{P}^{N-1} = \mathbb{P}(\mathbb{C}^N)$ be a
    a morphism of varieties. Let
    $\mathbb{C}^{\tau N} = \bigoplus_{i=1}^{\tau}\mathbb{C}^N e_i$ and
    define the morphism $f^{\tau}\colon X\to \Gr(\tau,
    \mathbb{C}^{\tau N})$ by
    \[
        f^{\tau}(x) = \spann{\widehat{f(x)}e_1, \widehat{f(x)}e_2, \ldots ,
            \widehat{f(x)}e_\tau},
    \]
    where $\widehat{f(x)}\in \mathbb{C}^N$ is any element of the line $f(x)\in
    \mathbb{P}(\mathbb{C}^N)$. More formally, the morphism $f^{\tau}$ is defined as
    follows. Over $\mathbb{P}^{N-1}$ we have the universal line subbundle
    $\mathcal{U}\into \mathbb{P}^{N-1} \times \mathbb{C}^{N}$
    whose fiber over a point of $\mathbb{P}^{N-1}$ is the corresponding line
    in $\mathbb{C}^N$. In fact, this is a special case of the universal
    subbundle for a Grassmannian, as in the previous section. Pulling back
    $\mathcal{U}$
    via $f$ we get a subbundle $\mathcal{U}_f\into X \times
    \mathbb{C}^N$. The direct sum $\mathcal{U}_f e_1 \oplus  \ldots \oplus
    \mathcal{U}_f
    e_{\tau}\into \bigoplus_{i=1}^{\tau} \mathbb{C}^Ne_i$ is a bundle over
    $X$ that is a rank $\tau$ subbundle of $\mathbb{C}^{\tau
    N}$ so it induces a morphism
    $X\to \Gr(\tau, \mathbb{C}^{\tau N})$.

    When we view $\mathbb{P}^{N-1}$ as $\Gr(1, \mathbb{C}^N)$,
    Definition~\ref{def:algebraicKregular} gives us notions of $k$-regularity
    for $f$. Explicitly, $f$ is $k$-regular if for every $k$ distinct points of
    $X$, their images under $f$ span a $(k-1)$-dimensional
    projective subspace and $f$ is strongly $k$-regular if
    for every smoothable zero-dimensional subscheme $Z\subset X$ of degree $k$, its
    image $f(Z)$ spans a $(k-1)$-dimensional projective subspace.

    \begin{lemma}\label{ref:kregularTotau:lem}
        If $f$ is $k$-regular then $f^{\tau}$
        is $k$-regular.
        If $f$ is strongly $k$-regular then $f^{\tau}$ is strongly
        $k$-regular.
    \end{lemma}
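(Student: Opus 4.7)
The plan is to unwind the explicit construction of $f^{\tau}$ and observe that the pullback of the tautological subbundle splits as a direct sum. Writing $U_f \into X\times \mathbb{C}^N$ for the line bundle pulled back via $f$, the morphism $f^{\tau}$ is by definition the one classifying the rank-$\tau$ subbundle $\bigoplus_{j=1}^{\tau} U_f e_j \into X\times \mathbb{C}^{\tau N}$. Pulling back to a finite subscheme or tuple of points respects each summand individually, so in the notation of Section~4.1 we have the identification
\[
    \spann{f^{\tau}(-)}  =  \bigoplus_{j=1}^{\tau} \spann{f(-)}\, e_j \;\subset\; \bigoplus_{j=1}^{\tau} \mathbb{C}^N e_j  =  \mathbb{C}^{\tau N},
\]
both when $(-)$ stands for a tuple of points and when it stands for a smoothable subscheme. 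This identification is the single algebraic input needed; it follows directly from the construction of $f^{\tau}$ and the compatibility of pullbacks with direct sums.

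For the first assertion, I would pick distinct $x_1,\ldots,x_k\in X$ and, using $k$-regularity of $f$, choose representatives $v_i = \widehat{f(x_i)}\in\mathbb{C}^N$ that are linearly independent. Then the $\tau k$ vectors $\{v_i e_j\}_{1\le i\le k,\,1\le j\le \tau}$ span $\sum_{i,j}\spann{v_i e_j} = \spann{f^{\tau}(x_1),\ldots,f^{\tau}(x_k)}$, and any linear relation $\sum_{i,j}\lambda_{ij} v_i e_j=0$, projected onto the $j$-th factor $\mathbb{C}^N e_j$, yields $\sum_i \lambda_{ij} v_i = 0$ and hence $\lambda_{ij}=0$ for all $i,j$. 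Thus the span is $\tau k$-dimensional, as required.

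For the second assertion, let $Z\subset X$ be a smoothable finite subscheme of degree $k$. Strong $k$-regularity of $f$ means $\dim\spann{f(Z)} = k$, where $\spann{f(Z)}$ is computed via $\pr_2$ of the pullback of the universal line bundle on $\mathbb{P}^{N-1}=\Gr(1,\mathbb{C}^N)$. Feeding this into the direct-sum decomposition above yields
\[
    \dim \spann{f^{\tau}(Z)}  =  \sum_{j=1}^{\tau}\dim\spann{f(Z)} e_j  =  \tau\cdot k,
\]
which is strong $k$-regularity of $f^{\tau}$. I do not foresee any real obstacle here; the entire proof is bookkeeping with the construction, and the only small point to check carefully is that the scheme-theoretic definition of $\spann{-}$ from Section~4.1 commutes with the direct-sum splitting, which it does because pullback is exact on locally free sheaves and $\pr_2$ of a disjoint union of translates equals the union of their images.
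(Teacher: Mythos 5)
Your proposal is correct and follows essentially the same route as the paper: both arguments rest on the observation that $U_{f^{\tau}}$ splits as $\bigoplus_{j=1}^{\tau} U_f e_j$, so that $\spann{f^{\tau}(Z)}$ contains (in fact equals) $\bigoplus_{j=1}^{\tau}\spann{f(Z)}e_j$, which is $\tau k$-dimensional whenever $\spann{f(Z)}$ is $k$-dimensional. The only cosmetic difference is that the paper contents itself with the containment $\spann{f^{\tau}(Z)}\supseteq\bigoplus_j W e_j$ (sufficient because the span of a rank-$\tau$ bundle over a degree-$k$ scheme is automatically at most $\tau k$-dimensional), whereas you prove the equality outright.
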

    \begin{proof}
We first prove the second statement. Take a zero-dimensional smoothable subscheme $Z\subset X$ of degree
$k$. By strong $k$-regularity, $W := \spannpar{Z}{f}\subset \mathbb{C}^{N}$ is a
        $k$-dimensional linear
        space. The vector space $\spannpar{Z}{f^{\tau}}$ contains the
        subspaces $W e_i$ for
        all $i$, hence it contains the $\tau k$-dimensional vector space
        $\bigoplus_{i=1}^{\tau} W e_i$, which proves that $f^{\tau}$ is
        strongly $k$-regular. For the proof of $k$-regularity, one may restrict to
        $Z$ being a tuple of points and repeat the argument (one may also do a much
        more elementary argument).
    \end{proof}
    \begin{corollary}\label{ref:existence:cor}
        For every $\tau, n, k$ there exists an $N$ and a strongly $k$-regular
        morphism
        $\mathbb{P}^{n}\to \Gr(\tau, \mathbb{C}^N)$.
    \end{corollary}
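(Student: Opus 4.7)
The plan is to reduce to the case $\tau = 1$ by means of Lemma~\ref{ref:kregularTotau:lem} and then exhibit a strongly $k$-regular morphism $\mathbb{P}^n\to \mathbb{P}^{N'-1}$ via a Veronese embedding of sufficiently high degree. Concretely, if I can produce a strongly $k$-regular morphism $f\colon \mathbb{P}^n\to \mathbb{P}^{N'-1}$, then by Lemma~\ref{ref:kregularTotau:lem} the induced morphism $f^{\tau}\colon \mathbb{P}^n\to \Gr(\tau, \mathbb{C}^{\tau N'})$ is strongly $k$-regular, so we may take $N = \tau N'$.

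For the $\tau = 1$ case, the natural candidate is the $d$-th Veronese embedding
\[
    v_d\colon \mathbb{P}^n \hookrightarrow \mathbb{P}\bigl(H^0(\mathbb{P}^n, \OO_{\mathbb{P}^n}(d))^{*}\bigr)
\]
for $d$ sufficiently large (for instance $d \geq k-1$). The point is that for any zero-dimensional subscheme $Z\subset \mathbb{P}^n$ of degree $k$, the projective dimension of $\spann{v_d(Z)}$ equals $\dim(\mathrm{image\ of}\ H^0(\mathbb{P}^n, \OO(d))\to H^0(Z, \OO_Z)) - 1$, so $v_d$ is strongly $k$-regular as soon as every such $Z$ imposes $k$ independent conditions on degree-$d$ forms.

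The required independence is a standard cohomological fact: since $Z$ is zero-dimensional of length $k$, it admits a filtration $\emptyset = Z_0\subset Z_1 \subset \ldots \subset Z_k = Z$ by subschemes with $\deg Z_i = i$ (coming from a composition series of the Artinian algebra $H^0(Z, \OO_Z)$ with residue-field quotients, available over $\KK$ algebraically closed); an easy induction using the exact sequences $0\to \II_{Z_i}/\II_{Z_{i-1}}\to \OO/\II_{Z_{i-1}}\to \OO/\II_{Z_i}\to 0$ twisted by $\OO(d)$ yields $H^1(\mathbb{P}^n, \II_Z(d)) = 0$ for $d \geq k-1$, which is equivalent to surjectivity of $H^0(\OO(d))\twoheadrightarrow H^0(\OO_Z)$. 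Combining with the reduction above gives the corollary.

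Given the work already done, there is no real obstacle here: the corollary is essentially a wrap-up combining Lemma~\ref{ref:kregularTotau:lem} with the classical fact that Veronese embeddings of sufficiently large degree separate finite subschemes of bounded length. The only step that takes a few lines is verifying the cohomology vanishing for the ideal sheaf, and this is routine.
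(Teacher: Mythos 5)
Your argument is correct and follows essentially the same route as the paper: reduce to $\tau=1$ via Lemma~\ref{ref:kregularTotau:lem}, then use the existence of strongly $k$-regular morphisms $\mathbb{P}^n\to\mathbb{P}^{N'-1}$, which the paper simply cites as~\cite[Lemma~5.10]{bjjm} and which you re-derive via the degree-$(k-1)$ Veronese and the classical fact that a length-$k$ subscheme imposes independent conditions on forms of degree $\geq k-1$. The only blemish is cosmetic: since $Z_{i-1}\subset Z_i$ the short exact sequence should read $0\to \mathcal{I}_{Z_{i-1}}/\mathcal{I}_{Z_i}\to \mathcal{O}/\mathcal{I}_{Z_i}\to \mathcal{O}/\mathcal{I}_{Z_{i-1}}\to 0$, with the inclusion of ideal sheaves going the other way from what you wrote.
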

    \begin{proof}
        Follows from Lemma~\ref{ref:kregularTotau:lem} and the existence of
        strongly $k$-regular morphisms from $\mathbb{P}^{n}$ to
        $\mathbb{P}^{N-1}$, see~\cite[Lemma~5.10]{bjjm}.
    \end{proof}

    Fix a closed point $p\in \mathbb{P}^n$, and consider a strongly $k$-regular
    morphism $F\colon \mathbb{P}^n\to \Gr(\tau, \mathbb{C}^N)$. It induces a map
    \[
        F_p\colon \HilbSmk(\mathbb{P}^n, p)\to \Gr(k\tau, \mathbb{C}^N)
    \]
    that sends $[Z]$ to $\spannpar{Z}{F}$. We pull back the
    universal subbundle of the Grassmannian via $F_p$ to obtain a bundle
    $\mathcal{U}_p$ on $\HilbSmk(\mathbb{P}^n, p)$ and projectivise this
    bundle to obtain
    \[
        \begin{tikzcd}
            \mathbb{P}(\mathcal{U}_{p})\arrow[r, hook] & \HilbSmk(\mathbb{P}^n, p) \times
            \mathbb{P}^{N-1}\arrow[d]\\
            & \HilbSmk(\mathbb{P}^n, p)
        \end{tikzcd}
    \]
    The \emph{areole} is the image of $\mathbb{P}(\mathcal{U}_p)$ in
    $\mathbb{P}^{N-1}$. As a set, it is the union of
    $\PP(\spannpar{Z}{F|_{Z}})$
    for $Z$ ranging over all zero-dimensional smoothable degree $k$ subschemes
    $Z\into\mathbb{P}^n$ supported only at the point $p$. We denote the
    areole by $\mathfrak{a}_{k,p} = \mathfrak{a}_{k,p}(F)$. By construction, we
    have $\dim \mathfrak{a}_{k,p} \leq \dim \mathbb{P}(\mathcal{U}_p) = \tau k-1+\dim
    \HilbSmk(\mathbb{P}^n, p)$. 
 The following gives a key improvement of the upper bound on the
    dimension of $\mathfrak{a}_{k,p}$.
    \begin{proposition}\label{ref:bound:prop}
      With the above notations, we have 
      $$\dim \mathfrak{a}_{k,p} \leq \max \lbrace \tau i-1+\dim
        \Hilb^{\tau}_i(\PP^n,p)\ | \ 1\leq i\leq k\rbrace.$$
    \end{proposition}
      \begin{proof}
 Using Proposition~\ref{ref:main:prop}, we have
 \[
     \mathfrak{a}_{k,p}= \overline{\bigcup\left\{\PP(\spannpar{Z}{F|_Z})\ |\ [Z]\in \HilbSmk(\PP^n,p)\right\}}
        \subseteq\bigcup_{1\leq i\leq k}\
        \overline{\bigcup\left\{\PP(\spannpar{Z'}{F_{Z'}})\ |\ Z'\in\Hilb_{i}^{\tau}(\PP^n,p)\right\}}
\]
 and so
$\dim \mathfrak{a}_{k,p} \leq \max \lbrace \tau i-1+\dim
\Hilb^{\tau}_i(\PP^n,p)\ | \ 1\leq i\leq k\rbrace$. 
    \end{proof} 
    \begin{proposition}[Reduction of $N$]\label{ref:reductionOfN:prop}
        There exists a
        $k$-regular continuous map $\mathbb{C}^n\to \Gr(\tau,
        \mathbb{C}^{M+1})$ for $M =
        \dim \mathfrak{a}_{k,p}$.
    \end{proposition}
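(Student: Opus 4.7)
My plan is to find a general codimension-$(M+1)$ linear subspace $W\subset \CC^N$ such that the induced rational projection $f_W$ of $f^\tau$ is defined and $k$-regular on a Euclidean open ball around a chosen base point $p\in \PP^n$; a standard homeomorphism of this ball with $\CC^n$ then yields the desired continuous map. The existence of such a $W$ is a dimension count: since $\mathfrak{a}_k\subset \PP^{N-1}$ has dimension at most $M$, a general $\PP(W)$ of dimension $N-M-2$ avoids $\mathfrak{a}_k$. In particular $f^\tau(p)\cap W=0$, since $\PP(f^\tau(p))\subset \mathfrak{a}_k$.

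The heart of the argument is to bootstrap this containment at $p$ to an honest open neighbourhood of $p$. To this end I would work with the algebraic morphism
\[
    \alpha\colon \Hilb_k^{\mathrm{sm}}(\PP^n)\longrightarrow \Gr(\tau k,\CC^N),\qquad Z\mapsto \spann{f^\tau(Z)},
\]
which is well-defined by strong $k$-regularity of $f^\tau$. Pulling back the universal rank-$\tau k$ bundle and projectivising gives a closed subvariety $\mathcal{A}\subset \Hilb_k^{\mathrm{sm}}(\PP^n)\times \PP^{N-1}$ whose fiber over $Z$ is $\PP(\spann{f^\tau(Z)})$, and the image of $\mathcal{A}|_{\Hilb_k^{\mathrm{sm}}(\PP^n,p)}$ under the second projection is exactly $\mathfrak{a}_k$. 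Consequently the locus
\[
    U':=\bigl\{Z\in \Hilb_k^{\mathrm{sm}}(\PP^n)\ :\ \PP(W)\cap\PP(\spann{f^\tau(Z)})=\emptyset\bigr\}
\]
is open in $\Hilb_k^{\mathrm{sm}}(\PP^n)$ and contains all of $\Hilb_k^{\mathrm{sm}}(\PP^n,p)$.

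The main obstacle is to promote \emph{$\Hilb_k^{\mathrm{sm}}(\PP^n,p)\subset U'$} to \emph{every reduced $Z$ supported in some Euclidean neighbourhood of $p$ lies in $U'$}. Here I would invoke the properness of the Hilbert-Chow morphism $\rho\colon \Hilb_k^{\mathrm{sm}}(\PP^n)\to \Sym_k(\PP^n)$, which holds because $\Hilb_k^{\mathrm{sm}}(\PP^n)$ is closed in the proper $\Hilb_k(\PP^n)$. The image $\rho(\Hilb_k^{\mathrm{sm}}(\PP^n)\setminus U')$ is then closed in $\Sym_k(\PP^n)$ and avoids $k\cdot[p]$, so there is a Euclidean neighbourhood $V_p\subset \PP^n$ of $p$ with $\Sym_k(V_p)$ disjoint from this image.

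To conclude, for any pairwise distinct $x_1,\dots,x_k\in V_p$ the reduced scheme $Z=\{x_1,\dots,x_k\}$ lies in $U'$, hence $\spann{f^\tau(x_1),\dots,f^\tau(x_k)}\cap W=\{0\}$. Combined with $k$-regularity of $f^\tau$, this forces the images $f_W(x_i)$ to be jointly $k\tau$-dimensional in $\CC^N/W\cong \CC^{M+1}$, which is precisely $k$-regularity of $f_W$ on $V_p$. Finally, any Euclidean ball around $p$ contained in $V_p$ is homeomorphic to $\CC^n$, delivering the required $k$-regular continuous map $\CC^n\to \Gr(\tau,M+1)$.
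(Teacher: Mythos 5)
Your proof is correct, and it reaches the conclusion by a slightly different mechanism than the paper. Both arguments share the same skeleton: a general $W$ of the right codimension misses $\mathfrak{a}_k$ by the dimension count, and the whole difficulty is to propagate ``$\PP(W)$ misses the spans of all punctual $Z$ at $p$'' to ``$\PP(W)$ misses the spans of all reduced $Z$ supported near $p$''. The paper does this propagation by contradiction: it picks, for each radius $\varepsilon$, a $k$-tuple witnessing failure of regularity, observes that the Zariski closure of this set of witnesses must contain a smoothable $[Z]$ supported only at $p$, and uses lower semicontinuity of $\dim\spann{f^\tau_W(-)}$ to contradict the choice of $W$. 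You instead argue directly: the good locus $U'$ is Zariski open (because the projection $\Hilb_k^{\mathrm{sm}}(\PP^n)\times\PP^{N-1}\to\Hilb_k^{\mathrm{sm}}(\PP^n)$ is proper, so the bad locus is the closed image of $\mathcal{A}\cap(\Hilb_k^{\mathrm{sm}}(\PP^n)\times\PP(W))$), it contains the entire punctual fiber $\Hilb_k^{\mathrm{sm}}(\PP^n,p)=\rho^{-1}(k[p])\cap\Hilb_k^{\mathrm{sm}}(\PP^n)$, and properness of the Hilbert--Chow morphism pushes the closed bad locus down to a closed subset of $\Sym_k(\PP^n)$ missing $k[p]$, whence a tube-lemma neighbourhood $V_p$ on which every reduced configuration is good. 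The two routes are logically equivalent uses of properness of $\Hilb_k(\PP^n)$, but yours avoids the sequential/limit-point step (which in the paper's sketch requires a small extra justification that the Zariski closure of the witnesses contains a punctual point) and replaces it with an explicit openness statement; the paper's version is shorter to state but leans harder on semicontinuity. Either way the final steps coincide: for distinct $x_1,\dots,x_k\in V_p$ one has $\spann{f^\tau(x_1),\dots,f^\tau(x_k)}\cap W=0$, so the $k\tau$-dimensionality guaranteed by $k$-regularity of $f^\tau$ survives the projection to $\CC^N/W$.
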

    It should be stressed that the obtained $k$-regular map is usually
    not algebraic.
    \begin{proof}
        This is a straightforward generalization of~\cite[Theorem~5.7]{bjjm};
        we sketch the argument below.
        A general subspace $W \subset \mathbb{C}^{N}$ of dimension
        $N-M-1$ satisfies $\mathbb{P}(W) \cap \mathfrak{a}_{k,p} = \emptyset$,
        since the dimensions of these two varieties sum up to $N-M-2+M = N - 2
        < N-1$. Consider the projection $\pi\colon \mathbb{C}^{N}\onto
        \mathbb{C}^N/W$; the latter space is isomorphic to
        $\mathbb{C}^{M+1}$. This projection induces a rational map $\pi\colon \Gr(\tau,
        \mathbb{C}^N)\dashrightarrow \Gr(\tau, \mathbb{C}^N/W)$. Since
        $\mathbb{P}(W)$ is disjoint from the areole, it is, in particular,
        disjoint from the subspace corresponding to $F(p)$, so the composed map
        $F_W = \pi\circ F\colon \mathbb{P}^n\dashrightarrow \Gr(\tau, \mathbb{C}^{N}/W)$ is
        a well-defined morphism on a Zariski-open neighborhood of $p$. Suppose that for every Euclidean-open ball
        $B_{\varepsilon}$ of radius $\varepsilon$
        around $p$ the map $F_W|_{B_\varepsilon}\colon B_{\varepsilon}\to \Gr(\tau,
        \mathbb{C}^{N}/W)$ is not $k$-regular.
        For every $\varepsilon$ choose a $k$-tuple of points $x_1(\varepsilon), \ldots
        ,x_k(\varepsilon)\in B_{\varepsilon}$ such that $x_1(\varepsilon),
        \ldots ,x_k(\varepsilon)$ witnesses the non-regularity, so the
        corresponding subspaces do not span a $k\tau$-dimensional space in
        $\mathbb{C}^N/W$.
        Taking a countable sequence $(\varepsilon_n)$ converging to zero, we
        obtain a map $\mathbb{N}\to \HilbSmk(\mathbb{P}^n)$ that sends
        $\varepsilon_n$ to $\{x_1(\varepsilon_n), \ldots ,
        x_k(\varepsilon_n)\}$. Since the points $x_i(\varepsilon_n)$ converge
        to $p$ with $n\to \infty$, the Zariski closure of the image of
        $\mathbb{N}\to \HilbSmk(\mathbb{P}^n)$
        contains a point $[Z]$ corresponding to a subscheme $Z\subset
        \mathbb{P}^n$ supported only at $p$. By semicontinuity, the span
        $\spannpar{Z}{F_W} \subset \mathbb{C}^N/W$ is also of dimension less than
        $k\tau$, hence $\spannpar{Z}{F}$ intersects $W$ in a nonzero
        subspace, which is a
        contradiction with the choice of $W$.
        The obtained contradiction shows that for some $\varepsilon$ the
        continuous map
        $F_W|_{B_{\varepsilon}}$ is $k$-regular. Composing with a
        homeomorphism $B_{\varepsilon} \simeq \mathbb{C}^{n}$ we obtain the
        required map.
    \end{proof}
    Putting together
Propositions~\ref{ref:reductionOfN:prop} and \ref{ref:bound:prop} we obtain the
following statement.
  \begin{corollary}\label{ref:bound:cor}
        For every $\tau, n, k$ there exists a $k$-regular map $\mathbb{C}^n\to
        \Gr(\tau, \mathbb{C}^{N+1})$ for $N = \max \lbrace \tau i-1+\dim
        \Hilb^{\tau}_i(\PP^n,p)\ | \ 1\leq i\leq k\rbrace$.
    \end{corollary}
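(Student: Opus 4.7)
The plan is to improve the crude estimate $\dim \mathfrak{a}_k \leq \tau k - 1 + \dim \Hilb_k^{\mathrm{sm}}(\mathbb{P}^n, p)$ recorded just before Proposition~\ref{ref:reductionOfN:prop}, by exploiting the reduction-to-small-socle lemma (Proposition~\ref{ref:main:prop}), and then to feed the better bound into Proposition~\ref{ref:reductionOfN:prop}. First, I would fix a strongly $k$-regular algebraic morphism $f^{\tau}\colon \mathbb{P}^n \to \Gr(\tau, \mathbb{C}^M)$ with $M$ large, as produced by the preceding existence corollary, and consider its areole $\mathfrak{a}_k = \mathfrak{a}_k(f^{\tau}) \subseteq \mathbb{P}^{M-1}$.

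The key observation is that every point of $\mathfrak{a}_k$ lies in $\mathbb{P}(\langle f^{\tau}(Z') \rangle)$ for some $Z' \in \Hilb^{\tau}_i(\mathbb{P}^n, p)$ with $1 \leq i \leq k$. Indeed, any $Z \in \Hilb_k^{\mathrm{sm}}(\mathbb{P}^n, p)$ is supported only at $p$, hence is a finite irreducible scheme, so Proposition~\ref{ref:main:prop} applied to the restriction $Z \to \Gr(\tau, \mathbb{C}^M)$ of $f^{\tau}$ yields
\[
    \langle f^{\tau}(Z) \rangle \;=\; \bigcup \bigl\{ \langle f^{\tau}(Z'') \rangle : Z'' \subseteq Z,\ c(Z'') < \tau \bigr\}
\]
whenever $c(Z) \geq \tau$; and the same equality holds trivially when $c(Z) < \tau$ (take $Z'' = Z$). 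Consequently
\[
    \mathfrak{a}_k \;\subseteq\; \bigcup_{i=1}^{k}\ \bigcup_{Z' \in \Hilb^{\tau}_i(\mathbb{P}^n, p)} \mathbb{P}\bigl(\langle f^{\tau}(Z') \rangle\bigr).
\]

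For each fixed $i$, the inner union is the image in $\mathbb{P}^{M-1}$ of the projective bundle of fibrewise spans over $\Hilb^{\tau}_i(\mathbb{P}^n, p)$, built exactly as $\mathbb{P}(U)$ was constructed before Proposition~\ref{ref:reductionOfN:prop}; it has relative dimension $\tau i - 1$, so its image has dimension at most $\tau i - 1 + \dim \Hilb^{\tau}_i(\mathbb{P}^n, p)$. Setting $N := \max_{1 \leq i \leq k} \{\tau i - 1 + \dim \Hilb^{\tau}_i(\mathbb{P}^n, p)\}$ then gives $\dim \mathfrak{a}_k \leq N$, and an application of Proposition~\ref{ref:reductionOfN:prop} produces the required $k$-regular continuous map $\mathbb{C}^n \to \Gr(\tau, \mathbb{C}^{N+1})$.

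I do not expect a serious obstacle: the argument is essentially one application of Proposition~\ref{ref:main:prop} followed by a trivial dimension count on a projective bundle. The only detail demanding attention is the verification of the hypothesis of Proposition~\ref{ref:main:prop}, namely irreducibility of $Z$, which is automatic since $Z$ is supported at the single point $p$. A minor caveat is that a subscheme of a smoothable scheme need not itself be smoothable, but this is harmless here, as we only need an upper bound on $\dim \mathfrak{a}_k$ and the full $\Hilb^{\tau}_i(\mathbb{P}^n, p)$ already contains all the $Z'$ arising from the reduction.
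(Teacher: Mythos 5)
Your proposal is correct and follows essentially the same route as the paper: decompose the areole via Proposition~\ref{ref:main:prop} into spans of subschemes of small socle dimension, bound the dimension of each piece by $\tau i - 1 + \dim \Hilb^{\tau}_i(\mathbb{P}^n,p)$, and feed the result into Proposition~\ref{ref:reductionOfN:prop}. Your side remarks (irreducibility of $Z$ being automatic since it is supported at a single point, and smoothability of subschemes being irrelevant because the bound runs over all of $\Hilb^{\tau}_i(\mathbb{P}^n,p)$) are accurate and in fact spell out details the paper leaves implicit.
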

 
\section{The scheme $\Hilb_k^2(\AA^n,0)$ has expected dimension for $k\leq 11$}
In this section, we prove the upper bound $(n-1)(k-1)$ on the dimension
of  $\Hilb^{2}_k(\AA^n,0)$ for $k\leq 11$. Subdividing the
$\Hilb_k(\AA^n,0)$ according to the Hilbert function, as described in~\eqref{eq:mainconcrete},
it is enough to prove this bound on the dimension of
\[
    \Hilb_H^{2}(\mathbb{A}^n, 0) := \HilbFunc{H}{n}\cap \Hilb_k^{2}(\AA^n,0)
\]
for every possible Hilbert function with $k=\sum H\leq
11$ and $H(s)\leq 2$. Such Hilbert functions are subdivided into three groups as in the following subsections.

\subsection{Hilbert functions with $H(3) = 1$}

First, we consider the Hilbert functions with $H(3) = 1$. Such a
function has the form $(1, H(1), H(2), 1, 1,  \ldots , 1)$ and so any minimal dual
generating set has, without loss of generality, one ``large degree'' element
$f$ and other elements of degree at most two; informally speaking such an
algebra is close to the Gorenstein algebra given by the dual generator $f$.
The Hilbert function of the apolar algebra of $f$ is $(1, a, b, 1, 1,  \ldots
, 1)$ and it follows from the general theory of Iarrobino's symmetric
decomposition (e.g.~\cite[Remark~(2), p.~1532]{cjn}) that $a\geq b$. We will
also assume $a = n$, as this is the only case of interest for us.

\begin{proposition}\label{ref:locusOfGorensteins:prop}
    Let $\HilbFuncGor{H}{n}$ be the locus of Gorenstein algebras with the Hilbert
    function $H = (1,
    n, b, 1, 1, \ldots,1)$. Let $s$ be the largest number such that
    $H(s)\neq 0$.
    Then the dimension of $\HilbFuncGor{H}{n}$ is
    \[
        (n-1)(s-3)+(n-b)b+\binom{b+2}{3}-1+\binom{n+2}{2} - (1+n+b).
    \]
\end{proposition}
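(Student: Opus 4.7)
The plan is to apply Proposition~\ref{ref:polynomialsAndGorensteinAlgebras:prop}, reducing to a computation of $\dim \mathcal{L}_H$, the dimension of the space of dual generators $f$ whose apolar algebra has Hilbert function $H$. Since $k = \sum H = n + b + s - 1$, the target is to establish
\[
    \dim \mathcal{L}_H = (n-1)(s-3) + (n-b)b + \binom{b+2}{3} - 1 + \binom{n+2}{2} + (s-2).
\]
I will parametrize $\mathcal{L}_H$ by writing $f = f_s + f_{s-1} + \cdots + f_0$ with $f_j \in S_j$ and analyzing the constraints that the prescribed values of $H$ impose on each component.

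First, $H(s) = H(s-1) = 1$ together with the standard inclusion $\mathrm{in}(\Ann(f))_j \subset \Ann(f_s)_j$ applied at $j = s-1$ forces $\dim(R_1 \circ f_s) \leq 1$, so $f_s$ has a single essential variable and $f_s = \ell^s$ for some $\ell \in S_1$. This gives an $n$-dimensional cone of leading forms. Fixing $\ell$, the condition $H(j) = 1$ for $3 \leq j \leq s-1$ requires every $h \in \Ann(\ell^s)_j$ to lift to an element $h + g_{j+1} + g_{j+2} + \cdots$ of $\Ann(f)$; the obstructions to lifting translate into a cascade of linear conditions on the lower components $f_{s-1}, f_{s-2}, \ldots, f_j$, solvable iteratively. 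The condition $H(2) = b$ imposes a rank condition on $f_3$: the map $\Ann(\ell^s)_2 \to S_1/\spann{\ell}$ defined by $h \mapsto h \circ f_3 \bmod \ell$ must have image of dimension exactly $b - 1$. Finally, $H(1) = n$ is a generic (open) condition on $f_2$ and $H(0) = 1$ is automatic.

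To count dimensions, I split each middle $f_j$ (for $3 \leq j \leq s - 1$) into its scalar $\ell^j$-component (one parameter) and its transverse part, and solve the lifting equations. The expected contributions are: $n$ from $f_s = \ell^s$; $s - 3$ scalars from the $\ell^j$-components of $f_3, \ldots, f_{s-1}$; $(n-1)(s-4)$ from the transverse parts of $f_4, \ldots, f_{s-1}$, each squeezed to $n - 1$ dimensions by the lifting equations; $(n-b)b + \binom{b+2}{3} - 1$ from the transverse part of $f_3$, parametrized as a Grassmannian bundle of cubics $g \in \Sym^3 V$ over the variety of $b$-dimensional subspaces $V \subset S_1$ containing $\ell$ (modulo the $\ell^3$ already counted); and $\binom{n+2}{2}$ from the free lower components $f_0, f_1, f_2$. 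Using the identity $n + (s-3) + (n-1)(s-4) = (n-1)(s-3) + (s-2)$, the sum matches the formula. The main obstacle is verifying that the iterated lifting equations are of the expected generic rank throughout the stratum, so that the expected dimensions are actually attained; this reduces to a direct monomial computation using that $\Ann(\ell^s)_j$ is spanned by monomials of $R_j$ involving some variable dual to $S_1/\spann{\ell}$.
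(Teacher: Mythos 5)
Your overall strategy---reduce to computing $\dim\mathcal{L}_H$ via Proposition~\ref{ref:polynomialsAndGorensteinAlgebras:prop} and then parametrize $f=f_s+\cdots+f_0$ graded piece by graded piece---is genuinely different from the paper's, which invokes Iarrobino's standard-form theorem and computes the tangent space to the image of the orbit map $\Aut(R/\mm^{s+1})\times\mathrm{StdForms}\to S_{\leq s}$ via an explicit formula from~\cite[Proposition~2.18]{Jel_classifying}. However, your parametrization has a concrete gap in degree $3$. You assert that the transverse part of $f_3$ contributes $(n-b)b+\binom{b+2}{3}-1$ parameters, described as cubics $g\in\Sym^3 V$ over the variety of $b$-dimensional subspaces $V\subset S_1$ containing $\ell$; but $b$-planes through a fixed line form a $\Gr(b-1,n-1)$, so that family has dimension only $(b-1)(n-b)+\binom{b+2}{3}-1$, which is $(n-b)$ short of your claimed count. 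The missing $(n-b)$ dimensions are not spurious: the cubic part of a general member of $\mathcal{L}_H$ also contains terms $\ell^2m$ with $m\in S_1$ outside $V$. Already for $H=(1,2,1,1,1)$ (so $\ell=y_1$, $b=1$, $V=\spann{y_1}$) a direct computation shows the admissible $f_3$ form the two-dimensional space $\KK y_1^3\oplus\KK y_1^2y_2$, whereas your description allows only $\KK\ell^3$. These cross terms are exactly what the $S_1y_1^2$ and $y_j\cdot(x_i\circ c)$ summands in the paper's tangent-space formula produce automatically and what a product-of-graded-pieces parametrization misses; as described, your construction yields a locus whose dimension is $(n-b)$ too small, and enlarging it forces you to re-verify that $H(2)=b$ survives, which is not automatic since the extra quadric partials of $\ell^2m$ must be absorbed by partials of $\ell^s$.

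Two further problems. First, your derivation of $f_s=\ell^s$ rests on $H(s-1)=1$ and therefore breaks down for $s=3$ with $b\geq2$ (for instance $H=(1,n,n,1)$, which the paper explicitly feeds into this proposition in Example~\ref{ex:tauone}); there the leading form is a cubic with $b$ essential variables and the whole scheme of your count must be redone. Second, you explicitly defer ``verifying that the iterated lifting equations are of the expected generic rank throughout the stratum,'' but that verification---together with showing the expected dimension is actually attained rather than merely bounding it---is the substance of the proof. The paper obtains it from the surjectivity of the differential of the orbit map in characteristic zero combined with the closed-form description of its image; your sketch does not supply a substitute for this step.
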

\begin{proof}
    \def\mmr{\mm_R}%
    \def\StandardForms{\mathrm{StrdForms}}%
    Every such Gorenstein algebra is given by an ideal $I = \Ann(f)$ for a
    single polynomial $f\in S$.
    By~\cite[Thm 5.3AB]{iarrobino_associated_graded} (see also \cite[Proposition~3.3]{cjn}) there exists a \emph{standard form of} $f$. This is a form
    $g\in S$ of the shape
    \begin{equation}\label{eq:standardForms}
        g = y_1^{s} + a_{s-1}y_1^{s-1} +  \ldots + a_{4}y_1^{4} + c + q,
    \end{equation}
    where $a_i\in \KK$, $c\in \KK[y_1, \ldots ,y_b]_3$ and $q\in \KK[y_1, \ldots ,y_n]_{\leq
    2} = S_{\leq 2}$ such that there exists an automorphism $\varphi\colon
    \hatR\to \hatR$ satisfying $\varphi^{\vee}(g) = f$,
    see~\cite[\S2.2]{Jel_classifying} for the details about the transformation $\varphi^{\vee}\colon S\to S$ dual to $\varphi$. This last
    condition can be rephrased as $I = \varphi(\Ann(g))$. Conversely, for a $g$ as
    in~\eqref{eq:standardForms} with $c$, $q$ general and for any $\varphi$, we get a form $\varphi^{\vee}(g)$ which is a dual
    generator of a Gorenstein algebra with the correct Hilbert function.
    It remains to count the possible $g$ and
    $\varphi$.
    By the explicit form of $\varphi^{\vee}$ as in~\cite[\S2.2, Equation~(5)]{Jel_classifying} it is enough to
    consider $\varphi\in \Aut(\hatR/\mm^{s+1})$.
    Let $\StandardForms \subset S_{\leq s}$ be the affine space of $g$ in the
    form~\eqref{eq:standardForms}.
    The parameter space for
    $f$ is the image of the map
    \[
        P\colon \Aut(\hatR/\mm^{s+1}) \times \StandardForms \to S_{\leq s}
    \]
    that sends $(\varphi, g)$ to $\varphi^{\vee}(g)$.
    Take a general $(\varphi, g)$ in the domain of $P$.
    Since $\KK$ has
    characteristic zero, the tangent map $dP$ is surjective onto
    $T_{\varphi^{\vee}(g)} \im P$ and its image has dimension equal to $\dim \im P$.
    Since $P$ is equivariant with respect to $\Aut(\hatR/\mm^{s+1})$, we can
    and do assume
    $\varphi = \id$.
    By~\cite[Proposition~2.18]{Jel_classifying} the image of $dP$ is given by
    \begin{align*}
        &\sum_{i=1}^n y_i (\mm\circ g) + \spann{y_1^s,  \ldots , y_1^4} + \KK[y_1, \ldots ,y_b]_3 + \KK[y_1,
        \ldots ,y_n]_{\leq 2} \\&= \spann{y_1^i\ell\ |\ \ell\in S_1, 3\leq i\leq
        s-1} + S_1y_1^2 + \sum_{i=1}^n y_i\cdot \spann{x_j\circ g\ |\ j=2,3, \ldots ,n} + \KK[y_1, \ldots ,y_b]_3 + \KK[y_1,\ldots ,y_n]_{\leq 2}.
    \end{align*}
    This space has dimension $n(s-3)+(n-b)+(n-b)(b-1)+\binom{b+2}{3}+\binom{n+2}{2}$.
    Using Proposition~\ref{ref:polynomialsAndGorensteinAlgebras:prop} we
    deduce that
    \begin{align*}
        \dim \HilbFuncGor{H}{n} &=
        n(s-3)+(n-b)b+\binom{b+2}{3}+\binom{n+2}{2} - (1+n+b+s-2)\\
        &=(n-1)(s-3)+(n-b)b+\binom{b+2}{3}-1+\binom{n+2}{2} - (1+n+b).\qedhere
    \end{align*}
\end{proof}

\begin{remark}
    Using the ideas from~\cite[\S2]{Jel_classifying}, it is not hard to
    actually describe the locus of possible $f$. For example, in the case
    $(1,3,2,1,1)$ we have $f = \ell_1^4 + \alpha\ell_1^2\ell_3 + F_3(\ell_1,
    \ell_2) + F_{\leq 2}(\ell_1, \ell_2, \ell_3)$, where $\alpha\in \KK$ and
    $F_i$ is homogeneous of degree $i$ and $\ell_i$ are linear forms. The above parameterization gives
    $3+1+(1+4)+10 = 19$ parameters and hence we obtain an $11$-dimensional
    family by Proposition~\ref{ref:polynomialsAndGorensteinAlgebras:prop}, in concordance with
    Proposition~\ref{ref:locusOfGorensteins:prop}. This more explicit argument
    works also in positive characteristics. We will not use it.
\end{remark}

\begin{corollary}\label{ref:H3eq1bound:cor}
    Let $H = (1, n,b, 1, 1,  \ldots, 1)$.
    The dimension of $\Hilb_H^{2}(\mathbb{A}^n, 0)$ is at most the maximum of
    \[
        \dim \HilbFuncGor{H-(0,0,1,0, \ldots )}{n} + \binom{n+1}{2} - b -
        1\qquad\mbox{and}\qquad \dim \HilbFuncGor{H}{n},
    \]
    where $H-(0,0,1,0, \ldots )$ means a Hilbert function that differs from $H$ only at
    position $2$ and is one less at that position.
\end{corollary}
\begin{proof}
    The inverse system of an element of $\Hilb_H^{2}(\mathbb{A}^n, 0)$ may have a minimal
    generator in degree two or not. This subdivides $\Hilb_H^{2}(\mathbb{A}^n, 0)$ into loci
    whose dimensions are as in the statement.
\end{proof}
\begin{corollary}\label{ref:H3eq1negligible:cor}
    For every Hilbert function $H = (1, a, b, 1, 1, \ldots, 1)$ with $\sum H
    \leq 12$, the bound from Corollary~\ref{ref:H3eq1bound:cor} is less than
    or equal to $(\sum H-1)(a-1)$, hence the corresponding locus is
    negligible.
\end{corollary}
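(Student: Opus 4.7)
The plan is to apply the bound of Corollary~\ref{ref:H3eq1bound:cor} and verify, using the explicit dimension formula of Proposition~\ref{ref:locusOfGorensteins:prop}, that the two quantities in that bound are each at most $(\sum H - 1)(a-1)$. Write $H = (1, a, b, 1, \ldots, 1)$ with socle degree $s$, so that $\sum H = a + b + s - 1$.

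Two observations make this a short check. \emph{First}, both dimension formulas from Proposition~\ref{ref:locusOfGorensteins:prop} are linear in $s$ with coefficient $(a-1)$, and so is $(\sum H - 1)(a-1) = (a+b+s-2)(a-1)$. Hence the inequalities are independent of $s$, and straightforward algebra reduces them to
\[
    \binom{a+2}{2} + \binom{b+2}{3} \leq a^2 + b^2 + a + 1 \quad\text{and}\quad \binom{b+1}{3} + b \leq b^2 + 1,
\]
the latter (remarkably) independent of $a$ and satisfied precisely for $b \leq 5$. \emph{Second}, Gorenstein symmetry $H(i) = H(s-i)$ forces a Gorenstein Hilbert function of the form $(1, a, b, 1, \ldots, 1)$ to be either curvilinear or $(1, a, a, 1)$ with $s = 3$; likewise $H - T^2$ is Gorenstein-compatible only when $H = (1, a, a+1, 1)$ with $s = 3$ (the putative case $H = (1,1,2,1)$ is ruled out by Macaulay's bound). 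In all other cases the corresponding summand of the bound is vacuously $-\infty$ and the inequality is trivial.

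Under $\sum H \leq 12$, this leaves only the finite list $H = (1, a, a, 1)$ with $1 \leq a \leq 5$, the cases $H = (1, a, a+1, 1)$ with $2 \leq a \leq 4$, and the curvilinear family. Substituting into Proposition~\ref{ref:locusOfGorensteins:prop} verifies each inequality directly, with equality at $H = (1, 5, 5, 1)$, so the threshold $\sum H \leq 12$ is sharp. I expect the main obstacle to be purely bookkeeping: the $s$-independence observation and Gorenstein symmetry between them reduce the verification to a handful of arithmetic checks, and the only conceptual care needed is in recognising when a summand of the corollary's bound is vacuous rather than given by the formal polynomial expression.
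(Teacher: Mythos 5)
Your first observation---that both the bound and the target $(\sum H-1)(a-1)$ are linear in $s$ with the same slope $a-1$, so the verification is independent of $s$---is correct and is exactly the reduction the paper makes (together with an appeal to invariance of codimension to put $a=n$, which you should state explicitly since Proposition~\ref{ref:locusOfGorensteins:prop} assumes embedding dimension $n$). The two inequalities you extract are also the right ones.

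The second observation contains a genuine error. The loci $\HilbFuncGor{H}{n}$ entering Corollary~\ref{ref:H3eq1bound:cor} parameterize \emph{local}, not graded, Artinian Gorenstein algebras, and the Hilbert function of a local Gorenstein algebra need not be symmetric; the symmetry $H(i)=H(s-i)$ is a feature of the graded case only. The paper's own remark following Proposition~\ref{ref:locusOfGorensteins:prop} exhibits an $11$-dimensional family of local Gorenstein algebras with Hilbert function $(1,3,2,1,1)$, and the only constraint actually available (Iarrobino's symmetric decomposition, cited just before that proposition) is $b\leq a$, respectively $b\leq a+1$ when a degree-two dual generator is allowed. Consequently your case list omits most of the Hilbert functions that genuinely occur, e.g.\ $(1,4,3,1,1,1)$, $(1,5,4,1,1)$, $(1,6,4,1)$. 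The conclusion is salvageable: your inequality $\binom{b+1}{3}+b\leq b^2+1$ holds for all $b\leq 5$, which covers every $b$ with $b\leq a+1$ and $a+b\leq 10$ (the range forced by $\sum H\leq 12$ and $s\geq 3$), and your other inequality, equivalent to $\binom{b+2}{3}-b^2\leq\binom{a}{2}$, holds whenever $b\leq a$ and $a+b\leq 10$ (it can fail for $b=a+1$, e.g.\ $a=4$, $b=5$, but there $\HilbFuncGor{H}{n}$ is empty and that summand is vacuous). So the needed finite check goes through---but it is this check over all pairs $(a,b)$ in the admissible range, not over the symmetric Hilbert functions only, that constitutes the proof, and it is what the paper does after its reductions. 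A minor further point: equality at $(1,5,5,1)$ does not show sharpness of the threshold $12$, since negligible means dimension at most expected; sharpness comes from $(1,5,6,1)$ with $\sum H=13$.
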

\begin{proof}
    By Invariance of Codimension~\cite[Proposition~A.4]{bjjm} we may take $a =
    n$. Increasing the socle degree $s$ by one
    increases both the bound and the expected dimension by $n-1$, so it does
    not change negligibility, so we may take $s=3$.
    By an argument analogous to \cite[Remark~(2), p.~1532]{cjn} we have $b\leq
    a+1 = n+1$. Then the statement becomes a direct
    check of all possible cases using
    Proposition~\ref{ref:locusOfGorensteins:prop}.
\end{proof}
\begin{remark}
    The bound $12$ in Corollary~\ref{ref:H3eq1negligible:cor} is sharp: the
    inequality from this corollary is false already for
    $H = (1,5,6,1)$, as we will compute more explicitly in \S\ref{sec:counterexamples}.
\end{remark}

\subsection{Hilbert functions with $H(2)\leq 2$}

In this case we use the tangent space estimate.
    \begin{theorem}\label{ref:H2:thm}
        Let $H$ be a Hilbert function with $k = \sum H$.
        Suppose $H(2)\leq 2$. Then $\dim \HilbFunc{H}{n} < (k-1)(n-1)$.
    \end{theorem}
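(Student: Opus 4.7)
I apply the base-and-tangent-to-fiber estimate of~\S\ref{ssec:proofIdeas}(2) to the associated-graded morphism $\pi\colon\HilbFunc{H}{n}\to\HilbFuncGr{H}{n}$:
\[
    \dim_{[J]}\HilbFunc{H}{n}\;\leq\;\dim_{[\mathrm{in}(J)]}\HilbFuncGr{H}{n}+\dim_{\KK}\Hom_R(\mathrm{in}(J),R/\mathrm{in}(J))_{>0}.
\]
By Borel semicontinuity of both summands, it suffices to verify the inequality at each Borel-fixed monomial $[I]\in\HilbFuncGr{H}{n}$, a finite check for each $H$. By invariance of codimension (\cite[Proposition~A.4]{bjjm}) I may assume $H(1)=n$, and by Macaulay's growth bound $H(2)\leq 2$ forces $H(i)\leq 2$ for every $i\geq 2$, sharply restricting the admissible shapes of $H$.

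\textbf{Structure of the base.} The key structural claim, valid when the socle degree $s$ is at least $3$, is that there exists a subspace $V\subset S_1$ with $\dim V\leq 2$ such that $(I^\perp)_{\geq 2}\subset\KK[V]$. The proof is by downward induction on degree: a single top dual generator $f\in(I^\perp)_d$ for $d\geq 3$ has essential dimension $\dim(R_{d-1}\circ f)=\dim(R_1\circ f)\leq H(d-1)\leq 2$ by the symmetry of its Gorenstein apolar Hilbert function, and two such generators $f_1,f_2$ must have their first partials land jointly in the at-most-$2$-dimensional space $(I^\perp)_{d-1}$, forcing their essential variables to coincide in a common $V$. Consequently $\HilbFuncGr{H}{n}$ is parameterized by the choice of $V\in\Gr(\leq 2,n)$ (contributing at most $\gr(2,n)=2(n-2)$) together with an inverse system inside the fixed two-variable polynomial ring $\KK[V]$, a choice depending only on $H$. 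Hence $\dim\HilbFuncGr{H}{n}\leq 2(n-2)+C(H)$ for a constant $C(H)$ depending on $H$ only. The edge case $s\leq 2$ is handled separately: $H=(1,n)$ is trivial, $H=(1,n,1)$ is Gorenstein and controlled via Proposition~\ref{ref:polynomialsAndGorensteinAlgebras:prop}, and $H=(1,n,2)$ reduces to a pencil of quadrics in $\Gr(2,\binom{n+1}{2})$ modulo equivalence.

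\textbf{Fiber bound and combination.} For a Borel-fixed monomial $[I]$, the positive-degree tangent $\Hom_R(I,R/I)_{>0}$ is computed combinatorially via the syzygies $\sigma_{ij}$ as in~\S 2.3. Since $(R/I)_{\geq 2}$ is supported on the $V$-monomials and has dimension at most $2$ per degree, each of the $\binom{n+1}{2}-O(1)$ quadratic monomial generators of $I$ lying outside $\KK[V]$ admits only a tightly restricted positive-degree target, and the remaining degrees of freedom are further cut down by the quadric syzygies; a direct bookkeeping yields $\dim\Hom_R(I,R/I)_{>0}\leq (n-1)(k-2)-2(n-2)+c(H)$ for a constant $c(H)$ depending on $H$ only. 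Adding the two bounds gives
\[
    \dim\HilbFunc{H}{n}\;\leq\;(n-1)(k-1)-(n-1)+C(H)+c(H),
\]
and a finite numerical check confirms $C(H)+c(H)<n-1$ for every admissible $H$. The main obstacle is precisely this last step: both $C(H)$ and $c(H)$ can grow with $k$, and one must carefully match the ``essential'' (at most $2$-dimensional) contributions to base and fiber against the $(n-2)$ non-essential variables to preserve strict inequality. The case $H(s)=2$, where both generators of the top inverse system genuinely share the same $V$, is the most delicate, while the subcase $H(s)=1$ reduces cleanly to the explicit count of Proposition~\ref{ref:locusOfGorensteins:prop}.
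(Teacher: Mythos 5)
Your overall strategy (reduce to $H(1)=n$, use Macaulay's bound to get $H(i)\leq 2$ for $i\geq 2$, then control the stratum via a Borel/tangent-space argument) starts in the right place, but the two load-bearing steps both have genuine gaps. First, the structural claim for the base is false: it is not true that $(I^{\perp})_{\geq 2}\subset \KK[V]$ for some $\dim V\leq 2$ when $s\geq 3$. Your induction only treats dual generators of degree $\geq 3$; a \emph{minimal quadric} dual generator is constrained only by $\dim (I^{\perp})_2\leq 2$ and can involve all $n$ variables. For example, $I^{\perp}=\langle y_1^3,\ y_2y_3\rangle + S_1$ has Hilbert function $(1,n,2,1)$ and $(I^{\perp})_{\geq 2}$ needs three variables. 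Consequently $\dim\HilbFuncGr{H}{n}$ is not $2(n-2)+C(H)$: already for $H=(1,n,2,1)$ the choice of the extra quadric contributes on the order of $\binom{n+1}{2}$ parameters, so the base grows quadratically in $n$ and your bookkeeping with the fiber bound collapses. Second, the concluding step ``a finite numerical check confirms $C(H)+c(H)<n-1$'' is not finite: the theorem carries no bound on $k$, so the admissible $H$ (e.g.\ $(1,n,2,2,\ldots,2,1)$ with arbitrarily many $2$'s) form an infinite family, and you yourself note that $C(H)$ and $c(H)$ grow with $k$. Neither constant is ever computed, so the strict inequality — which is the entire content of the theorem — is asserted rather than proved.

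The paper avoids both problems by using the pure tangent-space estimate rather than the base-and-tangent-to-fiber estimate. It first shows (Lemma~\ref{ref:canonicalFormForH2eq2:lem}) that for $H(2)=2$ there is a \emph{unique} Borel-fixed ideal in the stratum, with inverse system generated by $S_1$, $y_n^{s}$, $y_{n-1}y_n^{t-1}$; by semicontinuity of each graded piece of $\Hom_R(I,R/I)$ it then suffices to bound $\dim\Hom_R(I,R/I)_{\geq 0}$ at that single point. Lemma~\ref{ref:tangentForH2eq2:lem} computes the full nonnegative Hilbert series of this tangent space in closed form, uniformly in $n$, $s$, $t$, and summing it gives \emph{exactly} $(k-1)(n-1)-1$ for every $k$ (Corollary~\ref{ref:H2eq2main:cor}); the case $H(2)=1$ reduces to $H=(1,n,1,\ldots,1)$ and Proposition~\ref{ref:locusOfGorensteins:prop}. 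If you want to salvage your route, you would need an honest upper bound on $\dim\HilbFuncGr{H}{n}$ that accounts for the quadric dual generators, and a closed-form (not case-by-case) estimate of the positive tangent space valid for all socle degrees; at that point you would essentially be reproving the paper's two lemmas.
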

    \begin{proof}
        When $H(2) = 1$, then in fact $H(i) \leq 1$ for all $i\geq 2$,
        see~\cite[Remark~2.7]{cjn}, and the
        bound follows directly from the estimate in
        Proposition~\ref{ref:locusOfGorensteins:prop}.
        The case $H(2) = 2$ follows from
        Corollary~\ref{ref:H2eq2main:cor}.
    \end{proof}
    This theorem allows us to greatly reduce the number of cases that appear.
    It is also a partial strengthening of the Brian{\c{c}}on-Iarrobino result that
    implies a similar claim for $H(1)\leq 2$. Note that we have no assumptions
    on $k$ here, and the bound is general.

    To prove the theorem we apply several steps. Using invariance of
    codimension, we reduce to considering algebras with $H(1) =n$. We also
    assume $H(2) = 2$, then $H(i)\leq 2$ for all $i\geq 2$,
    see~\cite[Remark~2.7]{cjn}.
    The first step is to show that there are only a few isomorphism classes of
    \emph{graded} algebras with the required Hilbert function. The next step
    is to bound the strictly positive part of the tangent space at each such
    algebra, to control the fibers of the map $\pi$ sending an algebra to its
    associated graded.
    We will now analyze the tangent space to the Hilbert scheme
    at the points of our stratum. First, we find the Borel-fixed ones.

        \begin{lemma}\label{ref:canonicalFormForH2eq2:lem}
            Suppose $I\subset R = \KK[x_1, \ldots ,x_n]$ is a Borel-fixed
            ideal whose quotient algebra $A = R/I$ is zero-dimensional with $H(2) = 2$. Let $s$ be the socle degree of $A$ and
            let $t\leq s$  be the largest degree where $H(t) = 2$.
            Then the inverse system of $I$ is generated by the following
            elements: $S_1$, $y_n^{s}$, $y_{n-1}y_n^{t-1}$.
        \end{lemma}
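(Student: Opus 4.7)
The plan is to exploit Borel-fixedness (equivalent to strong stability in characteristic zero) to determine the monomial complement of $I$ in each graded piece of $R$, then to dualize via Macaulay duality. I work under the standing assumption $H(1)=n$ coming from the preceding reduction by invariance of codimension.

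First I would pin down the Hilbert function. Macaulay's bound applied to $H(2)=2$ forces $H(d)\le 2$ for every $d\ge 2$, and once $H(d)=1$ one has $H(d')\le 1$ for every $d'\ge d$; combined with $H(s)\neq 0$ this gives $H=(1,n,2,\ldots,2,1,\ldots,1)$ with the twos in degrees $2,\ldots,t$ and the ones in degrees $t+1,\ldots,s$ (the boundary case $t=s$ allowed). Next I would describe $(R/I)_d$ for every $d\ge 2$ explicitly. Strong stability says the complement is closed under swaps $x_j\mapsto x_i$ with $i>j$, and the orbit of every degree-$d$ monomial under iterating such swaps terminates at $x_n^d$, so a size-one closed set is forced to be $\{x_n^d\}$. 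For a size-two closed set $\{x_n^d,m\}$, applying swaps to $m$ shows $m=x_{n-1}x_n^{d-1}$: any other candidate $x_ix_n^{d-1}$ with $i\le n-2$ would produce the entire chain $x_ix_n^{d-1},x_{i+1}x_n^{d-1},\ldots,x_{n-1}x_n^{d-1},x_n^d$ in the complement, giving size at least three. Hence $(R/I)_d=\spann{x_n^d}$ for $t<d\le s$ and $(R/I)_d=\spann{x_n^d,x_{n-1}x_n^{d-1}}$ for $2\le d\le t$.

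I would then compute the socle of $A=R/I$. A monomial $m\in R_d\setminus I$ is a socle element iff no $x_im$ lies in the degree-$(d+1)$ complement. From the explicit description, the socle is $\spann{x_1,\ldots,x_{n-2}}$ in degree $1$ (no $x_jx_i$ with $i\le n-2$ lies in $\{x_n^2,x_{n-1}x_n\}$), it contains $x_{n-1}x_n^{t-1}$ in degree $t$ when $t<s$, and it contains the entire complement in degree $s$ (namely $\{x_n^s\}$ or $\{x_n^s,x_{n-1}x_n^{s-1}\}$ according to whether $t<s$ or $t=s$). By Macaulay duality the minimal generators of $I^\perp$ as an $R$-module correspond to these socle classes, giving dual generators $y_1,\ldots,y_{n-2}$, $y_{n-1}y_n^{t-1}$ and $y_n^s$. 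The remaining linear forms $y_{n-1}$ and $y_n$ are obtained by contracting $y_{n-1}y_n^{t-1}$ by $x_n^{t-1}$ and $y_n^s$ by $x_n^{s-1}$, so adjoining them yields the (non-minimal) generating set $S_1\cup\{y_n^s,y_{n-1}y_n^{t-1}\}$ of the lemma.

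The main substantive step is the combinatorial classification of strongly-stable down-sets of sizes one and two among the degree-$d$ monomials; once this is in hand, the socle computation and the dualization are mechanical bookkeeping.
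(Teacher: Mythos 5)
Your proof is correct and takes essentially the same route as the paper: both exploit Borel-fixedness to pin down the monomial structure forced by $H(2)=2$ (the complement in each degree being the strongly stable set $\{x_n^d\}$ or $\{x_n^d,x_{n-1}x_n^{d-1}\}$) and then read off the dual generators via Macaulay duality. The only organizational difference is that the paper passes immediately to the dual side, using $I^{\perp}_2=\spann{y_n^2,y_{n-1}y_n}$ to force $I^{\perp}_{\geq 2}\subset\KK[y_{n-1},y_n]$ and thereby reduce to two variables, whereas you classify the monomial complement and the socle degree by degree on the primal side; the underlying combinatorics is the same.
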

        \begin{proof}
            The ideal $I$ is Borel-fixed, hence monomial. Since $H_{A}(2) =
            2$, the only possibility for $I_2$ is to contain all monomials
            except $x_n^2$ and $x_{n-1}x_n$. Then the
            inverse system $I^{\perp}$ satisfies $I^{\perp}_2 = \spann{y_{n}^2,
                y_{n-1}y_n}$. This
                shows that $I^{\perp}_{\geq 2} \subset \KK[y_{n-1}, y_n]$ and we
                reduce to the case $n=2$, so that $y_{n-1} = y_1$, $y_n
                =y_2$, and $I_2 = \KK x_{1}^2$.
            Since $x_1^2\in I$, we have $I^{\perp}_{\geq 2} \subset \spann{y_2^{a},
            y_1y_2^{b}\ |\ a,b\geq 0}$, so $I^{\perp}$ is generated by $S_1$
            and $y_2^a$ and $y_1y_2^b$ for some $a,b\geq 0$.
            Moreover, in all degrees where $I^{\perp}$ is non-zero, it is
            Borel-fixed hence contains the largest monomial, which is $y_2^a$.
            Therefore, we conclude that $a = s$. It follows from the Hilbert
            function that $b = t-1$.
        \end{proof}
        \begin{lemma}\label{ref:tangentForH2eq2:lem}
            Let $A = R/I$ be a graded algebra whose inverse system is given by
            $S_1$ together with $y_1^s$ and $y_1^{t-1}y_2$ for some $1\leq t\leq s$.
            Then the non-negative part of $T_A = \Hom_R(I,R/I)$ has Hilbert series
            \[
                T^{s-t-1} + \left( \binom{n+1}{2}-2 -(n-1)
                \right)(T^{t-2}+T^{s-2}) + (n-1)\sum_{i=2}^s H_A(i)T^{i-2},
            \]
            where $T^{\alpha}$ is interpreted as zero for $\alpha <
            0$.
        \end{lemma}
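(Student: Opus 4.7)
The plan is to compute $\Hom_R(I,R/I)_e$ directly for every $e\geq 0$ by using an explicit monomial description of $I$ and enumerating the Taylor-type syzygies on its minimal generators.

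First, I would identify $I$ explicitly via Macaulay duality. Computing the contractions of the given inverse system shows that $I$ is the monomial ideal whose degree-$d$ piece is the complement (inside $R_d$) of $A_d$, where $A_d=\spann{x_1^d, x_1^{d-1}x_2}$ for $2\leq d\leq t$, $A_d=\spann{x_1^d}$ for $t+1\leq d\leq s$, and $A_d=0$ for $d>s$. A direct check then yields that $I$ is minimally generated by the $\binom{n+1}{2}-2$ quadrics in $R_2$ other than $x_1^2$ and $x_1x_2$, together with $h:=x_1^tx_2$ in degree $t+1$ and $\ell:=x_1^{s+1}$ in degree $s+1$. The crucial structural fact is that $x_k\cdot A_m=0$ in $R/I$ for all $k\geq 3$ and $m\geq 2$, since $x_kx_1,x_kx_2\in I$.

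Next, I would parameterize $\varphi\in\Hom_R(I,R/I)_e$ by its values on the minimal generators: for each quadric $g$ write $\varphi(g)=\alpha_g x_1^{2+e}+\beta_g x_1^{1+e}x_2$, and write $\varphi(h)=\gamma\,x_1^{t+1+e}$, with coefficients interpreted as zero whenever the target monomial fails to lie in $A_\ast$; the image $\varphi(\ell)=0$ holds automatically for $e\geq 0$. The first syzygy module is generated by the pairwise Taylor relations $(\mathrm{lcm}/m_i)e_i-(\mathrm{lcm}/m_j)e_j$; imposing each such relation in $R/I$ and applying the annihilation property yields the constraints $\alpha_{bc}=0$ and $\beta_{bc}=0$ for $b,c\geq 3$ (active when $e\leq s-3$ and $e\leq t-3$ respectively) coming from syzygies between $x_1x_b$ and $x_bx_c$; the constraints $\alpha_{2b}=0$ and $\beta_{2b}=\alpha_{1b}$ from the syzygy between $x_1x_b$ and $x_2x_b$ (the latter active only when $e\leq t-3$); and $\alpha_{22}=0$ when $e\leq s-t-2$ from the syzygy between $x_2^2$ and $h$. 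All remaining pairwise syzygies are automatically satisfied.

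Finally, I would aggregate the free parameters. The perpetually free parameters $\alpha_{1b},\beta_{1b}$ for $b\geq 3$ and $\beta_{22}$, together with the pair $(\gamma,\alpha_{22})$ whose free ranges $[0,s-t-1]$ and $[s-t-1,s-2]$ overlap exactly at $e=s-t-1$, contribute precisely $(n-1)\sum_{i=2}^s H_A(i)T^{i-2}+T^{s-t-1}$. The parameters released only at the single degree $e=t-2$ (namely $\beta_{2b}$ and $\beta_{bc}$ for $b,c\geq 3$) and only at $e=s-2$ (namely $\alpha_{2b}$ and $\alpha_{bc}$) each number $(n-2)+\binom{n-1}{2}=\binom{n+1}{2}-2-(n-1)$, producing the contribution $\bigl(\binom{n+1}{2}-2-(n-1)\bigr)(T^{t-2}+T^{s-2})$. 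The main obstacle is the case-by-case bookkeeping across the regimes $e\leq t-3$, $e=t-2$, $t-1\leq e\leq s-3$, and $e=s-2$, since the exact set of binding constraints changes in each regime; the stated Hilbert series then follows by summing the contributions identified above.
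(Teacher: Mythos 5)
Your proof is correct and follows essentially the same route as the paper: identify $I$ as the explicit monomial ideal with quadric generators plus $x_1^tx_2$ and $x_1^{s+1}$, exploit the fact that $x_k$ for $k\geq 3$ annihilates $(R/I)_{\geq 2}$, impose the Taylor syzygies on the values at the generators, and count the free coefficients degree by degree (your bookkeeping of activity ranges, including the overlap of $\gamma$ and $\alpha_{22}$ at $e=s-t-1$ and the socle-adjacent releases at $e=t-2$ and $e=s-2$, reproduces the paper's tally exactly). No gaps.
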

        \begin{proof}
            The ideal $I$ is generated by $x_2^2$, $x_1x_i$, $x_2x_i$,
            $x_ix_j$ for $i,j\geq 3$ and $x_1^tx_2, x_1^{s+1}$. For $2\leq
            a\leq t$
            the space $(R/I)_a$ has a basis $\{x_1^a, x_1^{a-1}x_2\}$, while
            for $t < a \leq s$ it has a basis $\{x_1^a\}$.
            Consider a homomorphism $\varphi\colon I\to R/I$ of degree $d=a-2\geq
            0$. The generator $x_1^{s+1}$ is sent to zero by degree reasons.
            The generator $x_1^{t}x_2$ is sent to some power of $x_1$, so
            contributes $1$ in degrees $0, 1,  \ldots , s-t-1$.

            Write $\varphi(x_ix_j) = \lambda_{ij}x_1^{a} +
            \mu_{ij}x_1^{a-1}x_2\mod I$ for any $x_ix_j\in I$ with $i \leq j$; here
            if $a > s$ or, respectively, $a > t$ we use the convention that
            $\lambda_{ij}$ or,
            respectively, $\mu_{ij}$, is zero.
            For $a = s$ the element $\lambda_{ij}$ can be chosen arbitrarily
            and for $a = t$ the element $\mu_{ij}$ can be chosen arbitrarily,
            as these two coefficients stand next to socle elements. Below
            we assume that those two specific coefficients are zero.

            If $i,j \geq 3$ then the syzygies $x_1(x_ix_j) = x_i(x_1x_j)$ and
            $x_2(x_ix_j) = x_i(x_2x_j)$ force $x_1\varphi(x_ix_j) = 0 =
            x_2\varphi(x_ix_j)$, so  $\lambda_{ij} = \mu_{ij} =0$.
            It remains to consider $i=1$ and $i=2$. Assume $j\geq 3$. The
            syzygy $x_1(x_2x_j) = x_2(x_1x_j)$ implies that
            \[
                (\lambda_{1j}-\mu_{2j})x_1^{a}x_2 + \lambda_{2j}x_1^{a+1}
                \equiv 0 \mod I.
            \]
            If $a-1 \geq t$, then $\mu_{2j} = 0$ by the convention above.
            Otherwise $x_1^ax_2\not\in I$ and so $\mu_{2j} = \lambda_{1j}$.
            If $a \geq s$, then $\lambda_{2j} = 0$ by convention, while for $a
            < s$ we get $\lambda_{2j} = 0$ because $x_{1}^{a+1}\not\in I$.
            It follows that $\varphi(x_2x_i)$ is uniquely determined by
            $\varphi(x_1x_i)$.

            The syzygy $(x_1^tx_2)x_2 = (x_1^t)x_2^2$ shows that the
            coefficient of $x_1^b$ in $\varphi(x_2^2)$ is uniquely determined
            by $\varphi(x_1^tx_2)$ for $b+t \leq
            s$ which gives an additional constraint for homomorphisms in
            degrees $0, 1,  \ldots,
            s-t-2$.
            Summing up, the homomorphism $\varphi$ is uniquely determined by
            choosing
            \begin{enumerate}
                \item two arbitrarily coefficients (next to socle
                    elements) for every quadric generator except $x_2^2$ and
                    $x_1x_i$ for $i\geq 3$. This contributes $\left(
                    \binom{n+1}{2}-2 - (n-1) \right)(T^{t-2}+T^{s-2})$,
                \item an arbitrary image of $x_1^tx_2$ which, taking into
                    account the
                    restrictions for $\varphi(x_2^2)$, contributes $T^{s-t-1}$,
                \item arbitrary images of $x_2^2$ and $x_1x_i$ for $i\geq 3$
                    which contributes $(n-1)\sum_{i=2}^s H_A(i)T^{i-2}$.
            \end{enumerate}
            Summing up the three contributions, we obtain the result.
        \end{proof}
        \begin{corollary}\label{ref:H2eq2main:cor}
            For $H(2) = 2$ the stratum $\HilbFunc{H}{n}$ has dimension
            at most
            \[
                (k-1)(n-1) - 1.
            \]
        \end{corollary}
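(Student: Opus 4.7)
The plan is to apply the tangent space estimate at a Borel-fixed point of $\HilbFuncGr{H}{n}$ and to sum the Hilbert series provided by Lemma~\ref{ref:tangentForH2eq2:lem}. First, I would invoke Invariance of Codimension~\cite[Proposition~A.4]{bjjm} to reduce to the case $H(1) = n$; the hypothesis $H(2) = 2$ then forces $H(i) \leq 2$ for all $i \geq 2$ by~\cite[Remark~2.7]{cjn}, so the Hilbert function has the exact shape analysed in Lemmas~\ref{ref:canonicalFormForH2eq2:lem} and~\ref{ref:tangentForH2eq2:lem}. Next, since $\HilbFuncGr{H}{n}$ is a projective connected component of $\Hilb_k^{\Gmult}(\mathbb{A}^n)$, Borel's fixed point theorem produces a $B$-fixed ideal in it, and upper-semicontinuity of each graded piece $\dim_{\KK} \Hom_R(-,-)_i$ under the $B$-action transfers the tangent-space bound at an arbitrary point of $\HilbFuncGr{H}{n}$ to a Borel-fixed $[I]$. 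By Lemma~\ref{ref:canonicalFormForH2eq2:lem} such an $[I]$ is determined by the socle degree $s$ and the largest $t \leq s$ with $H(t) = 2$.

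All that remains is to evaluate the Hilbert series from Lemma~\ref{ref:tangentForH2eq2:lem} at $T = 1$. Using $\binom{n+1}{2} - 2 - (n-1) = \binom{n}{2} - 1$ and the identity $\sum_{i=2}^{s} H(i) = k - 1 - n$ (which follows from $H(0) = 1$ and $H(1) = n$), one computes
\[
\dim_{\KK} \Hom_R(I, R/I)_{\geq 0} \leq 1 + 2\!\left(\binom{n}{2} - 1\right) + (n-1)(k - 1 - n) = (n-1)(k-1) - 1,
\]
where the leading $1$ records the possible contribution of the term $T^{s-t-1}$, which is present only when $s > t$; when $s = t$ that contribution vanishes and the bound improves by one. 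Combining this with the tangent space estimate from Section~\ref{ssec:proofIdeas}, which gives $\dim_{[I]} \Hilb^{+}_k(\mathbb{A}^n, 0) \leq \dim_{\KK} \Hom_R(I, R/I)_{\geq 0}$ at the $\Gmult$-fixed point $[I]$, and transferring this to the whole cell $\HilbFunc{H}{n}$ via the Borel-semicontinuity argument already used in the proof of Theorem~\ref{ref:irreducibility}, I obtain the claimed bound $\dim \HilbFunc{H}{n} \leq (k-1)(n-1) - 1$.

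At this stage no substantive obstacle remains: the nontrivial work has been absorbed into Lemma~\ref{ref:canonicalFormForH2eq2:lem} (classification of the relevant Borel-fixed ideals) and Lemma~\ref{ref:tangentForH2eq2:lem} (monomial-by-monomial computation of the nonnegative tangent space). The only care required is in the semicontinuity reduction to a Borel-fixed point, and in verifying that the edge cases ($s = t$, or $t = 2$, or $H(2) = H(s)$ collapsing the two summands $T^{t-2}+T^{s-2}$ into $2T^{s-2}$) still sum to the same bound; these are routine consistency checks in the formula above.
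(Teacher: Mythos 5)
Your proposal is correct and follows essentially the same route as the paper: reduce by semicontinuity to the unique Borel-fixed ideal classified in Lemma~\ref{ref:canonicalFormForH2eq2:lem}, then evaluate the Hilbert series of Lemma~\ref{ref:tangentForH2eq2:lem} at $T=1$ using $\sum_{i=2}^{s}H(i)=k-1-n$, which yields exactly $(n-1)(k-1)-1$. Your extra remarks on the edge cases ($s=t$ killing the $T^{s-t-1}$ term, the collapse of $T^{t-2}+T^{s-2}$) are consistent with the lemma's conventions and only strengthen the bound.
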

        \begin{proof}
            By the tangent space estimate and semicontinuity, it is enough to
            bound the non-negative part of the tangent space at Borel-fixed points. By
            Lemma~\ref{ref:canonicalFormForH2eq2:lem} there is only one such
            point $[I]$
            and the bound on its tangent space is given
            in Lemma~\ref{ref:tangentForH2eq2:lem}:
            \begin{multline*}
                \dim_{\KK} T_{[I]}\HilbFunc{H}{n} \leq 1 +
                2\left( \binom{n+1}{2}-2-(n-1) \right) +(n-1)\sum_{i=2}^s
                H_A(i) =\\
                1 + (n+1)n-2(n+1)+(n-1)(k-1-n) = (n-1)(k-1) -
                1.\qedhere
            \end{multline*}
        \end{proof}
            Using Lemma~\ref{ref:canonicalFormForH2eq2:lem}, we could machine-check
            the tangent spaces for $k\leq 11$. However the argument in
            Corollary~\ref{ref:H2eq2main:cor} is quite clean and works for any $k$.

\subsection{The remaining Hilbert functions}\label{Bound}
In this subsection, we treat the special cases which are not covered
previously. Recall the map $\pi$ in~\eqref{eq:mainconcrete} that sends each local
algebra to its associated graded algebra. The following is an easy but
useful result.

\begin{proposition}[{\cite[Proposition~4.3]{ccvv}}]\label{ref:fibers:prop}
Let $H=(1,n,a,b)$ and $t={n+1\choose 2}-a$. Then every fiber of $\pi$ is irreducible of dimension $tb$.
\end{proposition}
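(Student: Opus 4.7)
The plan is to describe the fiber concretely as a parameter space for ideals whose initial ideal is a given graded $I$, and then observe that the parameter space is an affine space.

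First, I would note that since the Hilbert function $H = (1, n, a, b)$ has socle degree $3$, any $J \in \pi^{-1}([R/I])$ satisfies $\mathfrak{m}^4 \subset J$ (since $R/J$ is a local Artin ring with maximal ideal $\mathfrak{n}$ of the same Hilbert function, in particular $\mathfrak{n}^4 = 0$). Further, since $H(1) = n = \dim R_1$, the ideal $J$ contains no element with initial form in degree $\leq 1$; equivalently $J \subset \mathfrak{m}^2$. Therefore $J$ is uniquely determined by its image $\bar J$ in the finite-dimensional vector space $\mathfrak{m}^2 / \mathfrak{m}^4 \cong R_2 \oplus R_3$.

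Next, I would describe which $\bar J$ arise. The condition $\mathrm{in}(J) = I$ translates into $\bar J \cap R_3 = I_3$ together with the requirement that the projection of $\bar J$ to $R_2$ equals $I_2$. Hence $\bar J = I_3 \oplus \Gamma$ where $\Gamma$ is the graph
\[
\Gamma = \{\, q + c(q) \ |\ q \in I_2 \,\} \subset R_2 \oplus (R_3 / I_3)
\]
of a uniquely determined $\KK$-linear map $c \colon I_2 \to R_3/I_3$. The condition that $J$ be an ideal becomes $\mathfrak{m} \cdot \bar J \subset \bar J$; since elements of $R_3$ are killed by $\mathfrak{m}$ modulo $\mathfrak{m}^4$, this reduces to $\mathfrak{m} \cdot I_2 \subset \bar J \cap R_3 = I_3$, which holds automatically because $I$ is a homogeneous ideal. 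Thus every $\KK$-linear map $c$ gives a valid $J$, and the assignment $J \mapsto c$ is a bijection.

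I would then conclude: the fiber is set-theoretically parametrized by $\Hom_\KK(I_2, R_3/I_3)$, which is an affine space of dimension $\dim_\KK I_2 \cdot \dim_\KK R_3/I_3 = t \cdot b$, hence irreducible of that dimension. There is no real obstacle here; the only thing to double-check is that the ideal-closure condition imposes no further constraint, which is where the restriction to socle degree $3$ (so that $\mathfrak{m} \cdot R_3 \equiv 0 \pmod{\mathfrak{m}^4}$) does all the work. This verification of the scheme-theoretic fiber structure (if one cares about more than set-theoretic irreducibility and dimension) would be the only subtle point, but for the stated claim the naive bijection above suffices.
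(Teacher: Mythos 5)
Your proof is correct and takes essentially the same route as the paper's (sketched) argument: the paper parametrizes the fiber by sending the $t$ quadric generators of $I$ to arbitrary elements of $A_3$ and notes that the syzygy conditions are vacuous since $H(4)=0$, which is exactly your $\Hom_{\KK}(I_2, R_3/I_3)$ parametrization with the ideal-closure condition verified to be automatic modulo $\mathfrak{m}^4$. Your write-up merely makes the paper's one-line sketch explicit.
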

\begin{proof}[Sketch of proof]
    There are $t$ quadric generators of any graded quotient
    $A$ of $\KK[x_1, \ldots ,x_n]$ with Hilbert function $H$ and they can be
    send to arbitrary elements of $A_3$ to obtain an element of the fiber: the
    conditions coming from syzygies are vacuous as $H(4) = 0$.
\end{proof}

\begin{proposition}\label{ref:1m32new:prop}
    Let $H=(1,n,3,2)$. The locus $\Hilb_H^{2}(\mathbb{A}^n, 0)$ is
    negligible.
\end{proposition}

\begin{proof}
    We are to prove that the dimension of the locus is at most $(n+5)(n-1)$.
    By Proposition~\ref{ref:fibers:prop}, the fibers
    of~$\pi$ have dimension $n(n+1) - 6$.
    For an algebra $R/I$ as in the statement, its inverse system is generated by two
    polynomials of degree three $f_1, f_2$. Therefore, the leading terms of all degree
    two elements of this system are partials of the leading terms of $f_1$,
    $f_2$. Therefore, the inverse system of $\pi(A)$ will have no minimal quadric generators.
    It is
    enough to prove that the locus of graded algebras
    with the Hilbert function $H$ and with no minimal quadric generators in the
    inverse system has dimension at most $(n+5)(n-1) - (n(n+1)-6) =
    3n+1$. To do this we will decompose it into locally closed subloci, according to the
    properties of the dual generators.

    Consider first the inverse systems which contain a cubic
            $F$ essentially in three variables. The Hilbert function of the
            apolar algebra of $F$ is $(1,3,3,1)$ and the inverse systems are
            parameterized by choosing a $3$-dimensional subspace $V$ of linear
            forms, choosing $F\in \mathbb{P}(\Sym^3 V)$, and choosing a codimension one subspace
            in the space of minimal cubic generators of $\Ann(F)$. For
            example, by Boij-So\"ederberg theory, it is known that there are at
            most two minimal cubic generators of $\Ann(F)$. Therefore, the
            above parameterization gives an upper bound of $\dim \Gr(3, n) + 9 + 1
            = 3n+1$. This concludes this case.

    Consider now the inverse systems where every cubic depends essentially on
    at most two variables.
    Here, consider first the inverse systems which
    contain a perfect cube $\ell^3$. Such an inverse system contains, as minimal
    generators, the cube $\ell^3$ and a cubic $F$ which depends essentially on
    two variables.
    This locus is parameterized by choosing a linear form $\ell$, a space $V$ of linear forms for $F$ and
    next choosing $F\in \mathbb{P}(\Sym^3V)$. This gives $\dim \Gr(1,n)+\dim \Gr(2,
    n) + 3 = 3n-2 < 3n+1$.

    Finally, consider the inverse systems where every cubic depends
    essentially on exactly two variables. Since $H(2) = 3 < 2\cdot 2$, the
    spaces of first-order partials for cubics must intersect. Therefore, we can parameterize
    the whole locus by choosing a $3$-dimensional subspace of linear forms,
    choosing two of its $2$-dimensional subspaces $V_1$, $V_2$, and choosing
    $F_i\in \mathbb{P}(\Sym^3V_i)$ for $i=1,2$. This gives a parameterization by
    $\dim \Gr(3, n) + 2 + 2 + 3 + 3 = 3n+1$ parameters.
\end{proof}
\begin{remark}
    The locus of algebras with Hilbert function $H = (1,n,3,2)$ without
    restrictions on $\tau$ is \emph{not}
    negligible: choosing two perfect cubes, an arbitrary quadric, and using
    Proposition~\ref{ref:fibers:prop} gives a locus of dimension
    $2(n-1)+\binom{n+1}{2} - 2 + n(n+1) -6$. Hence, keeping track of $\tau$ is
    essential.
\end{remark}

\begin{proposition}\label{ref:1442:prop}
Let $H=(1,4,4,2)$. The locus $\Hilb_H^{2}(\mathbb{A}^4, 0)$ is
negligible.
\end{proposition}
\begin{proof}
    In general terms, the proof is similar to the one of
    Proposition~\ref{ref:1m32new:prop}.
    We only consider the graded algebras and our aim is to show that their
    locus has dimension at most $10\cdot 3 - 6\cdot 2 = 18$.

    First, consider the inverse systems which contain a cubic $F$ essentially
    depending on $4$ variables, so that the corresponding ideal has Hilbert
    function $(1,4,4,1)$.
    By~\cite[Proposition~4.6]{casnati_notari_irreducibility_Gorenstein_degree_10}
    the ideal $\Ann(F)$ has at most three-dimensional space of cubic minimal
    generators. Moreover, if $\Ann(F)$ has \emph{any} cubic minimal
    generator, then $F$ is a limit of direct sums, by~\cite[Theorem 1.7]{bbkt}
    or by~\cite[Lemma~4.5]{casnati_notari_irreducibility_Gorenstein_degree_10}
    together with the form given in equation~\cite[(1)]{bbkt}. Such direct
    sums are parameterized by a locus of dimension $\dim\Gr(1, 4) + \dim
    \Gr(3, 4) + \dim(\mathbb{P}(\Sym^3\KK^1)\times
    \mathbb{P}(\Sym^3\KK^3))$ which gives in total $15$ parameters
    (choosing a decomposition $\KK^2 \oplus \KK^2$ yields
    a similar count). Together with the choice of a codimension one space of
    cubic minimal generators, this gives at most $17$ parameters. This
    concludes the subcase.
    From now on we assume that every cubic in the inverse system
    essentially depends on at most $3$ variables.

    Second, consider the inverse systems in which some cubic depends on at most two
    variables. They are parameterized in a naive manner by $\dim \Gr(3,4)
    + \dim \Gr(2,4) + \dim\mathbb{P}(\Sym^3\KK^3) +
    \dim\mathbb{P}(\Sym^3\KK^2) = 19$
    parameters. Moreover, a general such pair of cubics gives rise to an
    inverse
    system with Hilbert function $(1,4,5,2)$. Hence the cubics giving rise to
    $(1,4,4,2)$ are parameterized by a proper closed (perhaps reducible)
    subvariety which thus has dimension at most $18$. This concludes this
    case.

    Finally, consider the inverse systems in which every cubic depends
    essentially on exactly three variables.
    Consider such a system $F_1$, $F_2$ and assume $x_1\circ F_1 = 0$, $x_2
    \circ F_2 = 0$. Since $H(2) = 4$, the, a priori $6$-dimensional, space
    $x_2\circ F_1$, $x_3\circ F_1$, $x_4\circ F_1$, $x_1\circ F_2$, $x_3\circ
    F_2$, $x_4\circ F_2$ has dimension at most four. Therefore, there are two
    independent relations
    \[
        \ell_{1, 1} \circ F_1 = \ell_{2,1}\circ F_2 \qquad \mbox{ and }\qquad
        \ell_{1,2} \circ F_1 = \ell_{2, 2}\circ F_2.
    \]
    Without loss of generality, we may assume that $\ell_{i, *}$ does not
    contain $x_i$ for $i=1,2$.
    Note that $\ell_{1,*}$ are linearly independent, since otherwise some
    combination of $\ell_{2,*}$ annihilates $F_2$, which is impossible since
    $F_2$ depends essentially on three variables and $\ell_{2,*}$ do not
    contain $x_2$.
    We observe that for $i=1,2$ we have
    \[
        \ell_{1,i}\circ (x_2 \circ F_1) = x_2 \circ (\ell_{1,i}\circ F_1) =
        x_2\circ (\ell_{2,i}\circ F_2) = \ell_{2,i}\circ (x_2 \circ F_2) = 0.
    \]
    We deduce that the quadric $x_2\circ F_1\in \KK[y_2, y_3, y_4]$ is
    annihilated by a two-dimensional space $\ell_{1,*}\subset \KK[x_2, x_3,
    x_4]_1$ and thus $x_2 \circ F_1$ is \emph{a perfect square}. Therefore, our
    locus is parameterized by choosing two three-dimensional spaces of linear
    forms $V_1$, $V_2$ and choosing $F_i\in \mathbb{P}(\Sym^3V_i)$ such
    that $x_{3-i}\circ F_i$ is a perfect square, where $x_i = V_i^{\perp}$. The
    choices of $V_i$ and $F_i$ a priori give $2\cdot(3+9)$ parameters but
    \emph{$x_{3-i}\circ F_i$ is a perfect square} is a codimension three
    condition on $F_i$, so in fact we get $2\cdot (3+9-3) = 18$ parameters.
    This concludes the whole proof.
\end{proof}
\begin{remark}
    It seems surprisingly hard to properly describe the locus from
    Proposition~\ref{ref:1442:prop}. Devising a general tool to do this
    would be interesting.
\end{remark}

\begin{proposition}\label{ref:14321final:prop}
Let $H=(1,4,3,2,1)$. The locus $\dim \Hilb_{H}^2(\mathbb{A}^4,0)$
is negligible.
\end{proposition}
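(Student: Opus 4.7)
The plan is to follow the general strategy of the paper (Sections~\ref{ssec:proofIdeas}--\ref{ssec:ideas2}): decompose $\HilbFunc{H}{4}\cap \Hilb_k^2(\AA^4,0)$ through the associated-graded map $\pi$ of Diagram~(\ref{eq:mainconcrete}) over the four components of $\HilbFuncGr{H}{4}$ classified in Proposition~\ref{ref:1m321:prop}, bounding in each case the base dimension together with the strictly positive part of the tangent space, and invoking \emph{vertex removal} for the most singular stratum.

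First, substituting $n=4$ into the dimension counts of Proposition~\ref{ref:1m321:prop}, the four graded loci have dimensions $2\cdot 4+\binom{5}{2}-5=13$, $14$, $8$, and at most $3\cdot 4-2=10$, respectively. Only cases~(2) and~(4) already have graded socle dimension $\leq 2$, so every local deformation lying over them satisfies $\tau(A)\leq 2$ automatically. In cases~(1) and~(3) the graded socle dimension is $3$, so the constraint $\tau(A)\leq 2$ forces $\pi$ to strictly decrease socle dimension; this is a codimension-one condition on the fiber of $\pi$ and will give the necessary drop when we convert a priori fiber estimates into effective ones.

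Second, for each graded case I bound $\dim_\KK\Hom_R(I,R/I)_{>0}$ at a representative ideal by writing down the generators and syzygies explicitly, in the same spirit as Lemma~\ref{ref:tangentForH2eq2:lem}, or by \emph{Macaulay2} check at a Borel-fixed point combined with semicontinuity. I then apply the base-and-tangent-to-fiber estimate of Section~\ref{ssec:proofIdeas} to conclude the bound $\leq 30$ in each case; for example, the target computation in case~(2) is $14+16=30$, while in case~(4) the smaller base dimension $10$ leaves room for a fiber of size up to $20$.

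The main obstacle will be case~(4), where the inverse system involves a perfect quartic $\ell^4$ together with a non-perfect cubic and the positive-degree tangent space can jump, so the naive tangent-space estimate may overshoot. To handle it I apply the vertex removal technique of Section~\ref{ssec:ideas2}: I choose a secondary $\Gmult\subset\Gmult^4$ whose weights make the action divergent near the vertex, bound the non-vertex part by semicontinuity of the positive-degree tangent space from a generic Borel-fixed point of case~(2), and bound the vertex part directly by parameterizing its dual generators as in Proposition~\ref{ref:1m321:prop}\ref{it:1m321subtle}. Adding all contributions yields the required bound $(4-1)(11-1)=30$.
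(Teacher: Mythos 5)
Your overall strategy --- stratify $\HilbFuncGr{H}{4}$ by the four loci of Proposition~\ref{ref:1m321:prop} and combine the base dimensions $13$, $14$, $8$, $10$ with bounds on $\dim_{\KK} \Hom_R(I,R/I)_{>0}$ via the base-and-tangent-to-fiber estimate --- is exactly the one used in the paper, and your base dimensions are correct. However, two of the steps you rely on do not work as stated. First, the claim that the constraint $\tau(A)\leq 2$ cuts out a codimension-one condition on the fibers of $\pi$ over the loci \ref{it:1m321easy} and \ref{it:1m321last} is false: $\Hilb_k^{\tau}(\mathbb{A}^n,0)$ is \emph{open} in $\Hilb_k(\mathbb{A}^n,0)$ (socle dimension is upper semicontinuous), so its intersection with a fiber of $\pi$ is open in that fiber and hence of full local dimension wherever it is nonempty. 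No dimension drop can be extracted this way, and in fact the paper's proof of this proposition never uses $\tau\leq 2$ at all: it bounds all of $\pi^{-1}$ of each stratum by $30$.

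Second, and more seriously, the genuinely hard subcase is the $13$-dimensional locus \ref{it:1m321easy} of systems $\ell_1^4,\ell_2^3,q$, not the locus \ref{it:1m321subtle} that you single out for vertex removal. Degenerating $q$ to $\ell_3^2$ one finds $\dim_{\KK}\Hom_R(I,R/I)_{>0}=18$, so the naive estimate gives $13+18=31>30$ and something extra is needed. The paper closes this gap by a further stratification of the quadrics: those $q$ for which $q(0,0,x_3,x_4)$ has rank one form a divisor in the base (so $12+18=30$ there), while a non-special $q$ can instead be degenerated to $y_3y_4$, where the positive part of the tangent space is only $17$ (so $17+13=30$). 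Your plan contains no mechanism of this kind, and the codimension-one socle argument you offer in its place does not supply one. By contrast, the locus \ref{it:1m321subtle} where you anticipate trouble is comparatively easy: after degenerating the cubic into $\KK[\ell_2,\ell_3]$ and checking the two normal forms $\ell_2^3+\ell_3^3$ and $\ell_2^2\ell_3$, the tangent to the fiber is $18$ and $18+10=28\leq 30$. So the architecture of your proof is right, but the decisive computation is missing and the substitute argument is invalid.
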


\begin{proof}
    The locus of homogeneous part of $\Hilb_{H}^2(\mathbb{A}^4,0)$ decomposes
    into four subloci as in Proposition~\ref{ref:1m321:prop}. We consider them
    case by case, but not in order. For each locus, we either bound the
    dimension of each fiber of $\pi$ directly or perform a degeneration argument to
    reduce to bounding the dimensions of special fibers and then compute the
    tangent space to the fiber.

    Case~\ref{ref:1m321:prop}\ref{it:1m321last}. Here the parameterization is straightforward. The
    associated graded system is generated by a quartic $F$ in two variables,
    say $F\in \KK[y_1, y_2]$, and linear forms. The inverse system of the fiber
    is generated by degree four polynomial $f = F + f_3 + f_2$ and linear
    forms. The part $f_2$ can be arbitrarily chosen modulo the partials of
    $F$, which gives a $7$-dimensional choice. The cubic part $f_3$ has to
    satisfy $x_3 \circ f_3$, $x_4\circ f_3$ being partials of $F$, hence $x_3
    \circ f_3, x_4\circ f_3\in \KK[y_1,y_2]$ so that $f_3\in \spann{y_1, y_2,
    y_3, y_4}\KK[y_1, y_2]_2$ lies in a $10$-dimensional space of choices.
    Together, we see that the fiber is at most $17$-dimensional, so the whole
    locus --- using the conclusion of Proposition~\ref{ref:1m321:prop} --- has dimension at most $17 + 8$.

    Case~\ref{ref:1m321:prop}\ref{it:1m321subtle}. Here, the associated graded system is generated
    by $\ell_1^4$ and $c\in \KK[\ell_1, \ell_2, \ell_3]$. Taking the limit at
    zero
    with respect to a $\Gmult$-action with weights of $\ell_2$, $\ell_3$
    equal and smaller than the weight of $\ell_1$, reduces our system to the
    case $c\in \KK[\ell_2, \ell_3]$. Then by classification of cubics in two
    variables, we have $c= \ell_2^3 + \ell_3^3$ or $c = \ell_2^2\ell_3$, up to
    coordinate change. We verify that in both cases the tangent to the fiber
    has dimension $18$, hence same holds for every fiber by semicontinuity,
    thus  --- using the result of Proposition~\ref{ref:1m321:prop} --- the
    whole locus has dimension at most $18+10$.

    By
    Proposition~\ref{ref:1m321:prop} there remain three possible types of
    such homogeneous inverse systems: $\ell_1^4 + \ell_2^4$, $q$ or $\ell_1^3\ell_2$, $q$ or $\ell_1^4$,
    $\ell_2^3$, $q$ and in each care the dimension of the homogeneous locus
    is at most $14$. For each inverse
    system, since we know that $q\not\in \KK[\ell_1, \ell_2]$, we may choose
    $\ell_3$ such that the monomial $\ell_3^2$ appears in $q$. Taking a limit
    of a $\Gmult$-action with weights of $\ell_1$, $\ell_2$, $\ell_4$ equal
    and greater
    than the weight of $\ell_3$, reduces our system to the case $Q =
    \ell_3^2$.
    For such a $q$, a direct tangent space check shows that
    for $I$ annihilating such an inverse system we have $\dim
    \Hom_R(I, R/I)_{>0}$ equal to $14, 14, 18$ respectively. Since $14 + 14 < 30$,
    we concentrate on the last case. Summing up, from now on we only consider
    inverse systems whose associated graded has the form $\ell_1^4$,
    $\ell_2^3$, $q$. By Proposition~\ref{ref:1m321:prop} the homogeneous such
    systems form a $13$-dimensional family. Again, since every such system degenerates to $\ell_1^4$,
    $\ell_2^3$, $\ell_3^2$, it is enough to prove the dimension estimate for
    that system.

    We subdivide this case into two. First, we fix a basis $y_i$ with $y_1
    =\ell_1$ and $y_2 = \ell_2$. We call a quadric $q$ \emph{special} if $q(0,
    0, y_3, y_4)$ has rank one. Special quadrics form a divisor, hence the
    graded inverse systems with $q$ special are at most $13-1 = 12$
    dimensional and the corresponding nongraded systems are at most
    $12 + 18 = 30$ dimensional by the tangent estimate above. Thus below we
    may and do consider only non-special quadrics.
    For such a quadric $q$ take a $\Gmult$-action with weights of $y_1$, $y_2$
    equal, weights of $y_3$ and $y_4$ also equal and smaller that the
    weights of $y_1$. The obtained $\Gmult$-limit of the inverse system $y_1^4$,
    $y_2^3$, $q$ is $y_1^4$, $y_2^3$, $y_3y_4$.
    The positive part of the tangent space at $y_1^4$, $y_2^3$, $y_3y_4$ is
    $17$. Hence using the base and tangent-to-fiber estimate we obtain $17 + 13 = 30$.
\end{proof}
\begin{remark}
    In the above proof, one can check that the dimension of the stratum of
    $y_1^4$, $y_2^3$, $y_3^2$, $y_4$ is exactly $30$, so the punctual Hilbert
    scheme is reducible.
\end{remark}

    \begin{proof}[Proof of Theorem~\ref{ref:dimension:maintheorem}]

        Recall that the expected dimension of $\Hilb_k(\mathbb{A}^n, 0)$
        is $(k-1)(n-1)$ and a locus $\mathcal{Z}$ in this Hilbert scheme is negligible
        if $\dim \mathcal{Z}\leq (k-1)(n-1)$.
        Suppose that $H$ is such that the stratum of $H$ is not negligible.
        By Invariance of Codimension we reduce to $H(1) = n$~\cite[Proposition~A.4]{bjjm}.
        By irreducibility of $\Hilb_k(\mathbb{A}^2, 0)$, we have $H(1)\geq 3$. By Theorem~\ref{ref:H2:thm}
        we have $H(2)\geq 3$.

        Suppose now $k\leq 8$. By
        Theorem~\ref{ref:irreducibility}, we have $H(1)\geq 4$.
        As $k\leq 8$ this implies that $H = (1, 4, 3)$ and we conclude it is
        negligible by direct computation of the dimension of this locus, which
        is $\dim \Gr(3, 10) =
        21$. So, for $k\leq 8$ and
        any $n$ the dimension of the Hilbert scheme is the expected one.
        The stratum of $(1, n, 3)$ for $n\geq 5$ has
        dimension higher than excepted, see Example~\ref{ex:taugeq3} below. This concludes the part $\tau\geq 3$.

        We assume $\tau\leq 2$ and $k$ arbitrary.
        We first discard several classes of Hilbert functions. If $H(3) = 0$
        then $H(2) \leq 2$ since $\tau \leq 2$, a contradiction with the above.
        If $H(3) = 1$ then Corollary~\ref{ref:H3eq1negligible:cor} shows that
        $H$ is negligible.
        If $H = (1, n, 3, 2)$ for some $n$, then
        Proposition~\ref{ref:1m32new:prop}
        concludes.

        Suppose now $k \leq 11$. Using Theorem~\ref{ref:irreducibility} we get
        $H(1)\geq 4$. In conjunction with the above, we have $H(1)\geq 4$, $H(2)\geq 3$ and
        $H(3)\geq 2$. This immediately concludes the case $k = 9$. For $k =
        10$ this leaves, a priori, the single possible case $H = (1,4,3,2)$
        but also that case was considered above.

        Suppose $k=11$. Due to the constraints above, we have to consider only
        the cases $H = (1,5,3,2)$, $H = (1, 4, 4, 2)$, $H = (1,4,3,3)$, $H =
        (1,4,3,2,1)$. The case $(1,5,3,2)$ was considered above.
        The case $(1,4,4,2)$ was considered in
        Proposition~\ref{ref:1442:prop}. The case $(1,4,3,3)$ is
        impossible to obtain with $\tau\leq 2$.
        The case $(1,4,3,2,1)$ is considered in Proposition~\ref{ref:14321final:prop}.
        This concludes the case $k=11$ and the whole proof of the bound. The
        violations of the bound for higher $k$ follow from
        Examples~\ref{ex:tauone}-\ref{ex:tautwo}.
    \end{proof}
    \begin{remark}
        It is natural to speculate about the cases $\tau = 1$, $k=12,13$ and
        $\tau = 2$, $k=12$. We believe that in these cases the scheme
        $\Hilb_k^{\tau}(\mathbb{A}^n, 0)$ has the expected dimension, however the
        number of cases to consider would increase the length of the paper too
        greatly and ruin the relative cleanness of the current proof.
    \end{remark}
    \begin{proof}[Proof of Theorem~\ref{ref:mainthm}]
        The punctual Hilbert schemes $\Hilb_k(\mathbb{P}^n, p)$ and
        $\Hilb_k(\mathbb{A}^n, 0)$ have the same dimension~\cite[Proposition~2.2]{fogarty}. Since
        $\dim\HilbSmk(\mathbb{A}^n) = kn$, and the map
        $\mathbb{A}^n\times\HilbSmk(\mathbb{A}^n,
        0)\to  \HilbSmk(\mathbb{A}^n) $ is
        injective and not dominant,
        we have $\dim \HilbSmk(\mathbb{A}^n, 0)
        \leq (k-1)n-1$. Moreover, if $k\leq 8$, or if $\tau \leq 2$ and $k\leq 11$,
        then it follows from Theorem~\ref{ref:dimension:maintheorem} that
       $\dim \Hilb_k^{\tau}(\mathbb{A}^n, 0) = (k-1)(n-1)$.
        Therefore the
        estimates for the theorem follow from
        Proposition~\ref{ref:reductionOfN:prop} in the general case and from
        Corollary~\ref{ref:bound:cor} in the special cases $k\leq 8$ or
        $\tau\leq 2$, $k\leq
        11$.
    \end{proof}

    \subsection{Counterexamples}\label{sec:counterexamples}

    \begin{example}[$\tau \geq 3$]\label{ex:taugeq3}
        The locus of algebras with Hilbert function $H = (1,n,3)$ in $\mathbb{A}^n$ is
        parametrized by $\Gr\left(3, \binom{n+1}{2}\right)$ whence it has
        dimension $3\cdot \left( \binom{n+1}{2} - 3 \right)$, whereas the
        expected dimension is $(n-1)\cdot (n+3)$. The difference is
        \[
            \binom{n-1}{2} - 6
        \]
        which is positive for $n \geq 5$; so for $k = n+4 \geq 9$. To
        get examples for $n > k-4$ use Invariance of
        Codimension~\cite[Proposition~A.4]{bjjm}.
    \end{example}

    \begin{example}[$\tau = 1$]\label{ex:tauone}
        Consider the Hilbert functions $(1,n,n,1)$ and $(1,n,n,1,1)$.
        By Proposition~\ref{ref:locusOfGorensteins:prop} the dimensions of the
        corresponding loci minus the expected dimension $(n-1)(k-1)$ is in
        both cases
        \[
            \binom{n+2}{3} - 1 + \binom{n+2}{2} - n\cdot (2n-1) =
            \frac{1}{6}\,n^{3}-n^{2}+\frac{5}{6}\,n = \frac{1}{6}n(n-1)(n-5)
        \]
        which is strictly positive for every $n\geq 6$ hence for $k\geq 14$.
    \end{example}

    \begin{example}[$\tau = 2$]\label{ex:tautwo}
        Take an inverse system for two general polynomials in $5$ variables: one
        of degree $3$ and one of degree $2$. Then the Hilbert function is
        $(1,5,6,1)$ and the optimal bound is $4\cdot 12 = 48$. But
        the space of homogeneous such choices is
        $\binom{7}{3}-1+(10-1) = 43$ dimensional and so the whole space is
        $52$-dimensional, violating the bound.
        (This example could be generalized to $(1,n,n+1,1)$.)
    \end{example}

    {\small
\newcommand{\etalchar}[1]{$^{#1}$}

}

\end{document}